\newtheorem{theorem}{Theorem}
  \newtheorem{lemma}[theorem]{Lemma}
  \newtheorem{problem}[theorem]{Problem}
   \newtheorem{corollary}[theorem]{Corollary}
  \newtheorem{proposition}[theorem]{Proposition}
  \theoremstyle{definition}
  \newtheorem{example}[theorem]{Example}
   \newtheorem{remark}[theorem]{Remark}
\newcommand{\CC}{\mathbb C} 
\newcommand{\RR}{\mathbb R}
\newcommand{\QQ}{\mathbb Q}
\newcommand{\ZZ}{\mathbb Z}
\newcommand{\TP}{\mathbb{TP}}
\newcommand{\cS}{\mathcal S}
\newcommand{\cV}{\mathcal V}
\newcommand{\cP}{\mathcal P}
\newcommand{\cF}{\mathcal F}
\newcommand{\ord}{\text{ord}}
\newcommand{\val}{\text{val}}
\newcommand{\F}{\mathcal{F}}
\newcommand{\eins}{\mathbbm{1}}
\newcommand{\suchthat}{\,:\,}
\DeclareMathOperator{\Div}{Div} 
\DeclareMathOperator{\Prin}{Prin} 
\DeclareMathOperator{\Pic}{Pic}
\DeclareMathOperator{\Sym}{Sym}
\DeclareMathOperator{\supp}{supp}
\DeclareMathOperator{\dist}{dist}
\DeclareMathOperator{\tconv}{tconv}
\DeclareMathOperator{\link}{link}
\DeclareMathOperator{\image}{image}
\DeclareMathOperator{\lcm}{lcm}
\DeclareMathOperator{\type}{type}
\date{\today}
 \title[Linear Systems on Tropical Curves]{Linear Systems on Tropical Curves}
 \author{Christian Haase}
 \address{Math. Inst. \\ FU Berlin}
 \email{haase@math.fu-berlin.de}
 \author{Gregg Musiker}
\address{Massachusetts Institute of Technology,
    Department of Mathematics,
    Cambridge, MA 02139}
\email {musiker@math.mit.edu}
\author{Josephine Yu}
\address{Mathematical Sciences Research Institute,
    17 Gauss Way,
    Berkeley, CA 94720}
\email{jyu@msri.org}
 \thanks {\emph {2010 Mathematics Subject Classification:} 14T05 (Primary); 14H99, 14C20, 05C57 (Secondary)}
 \thanks{\emph{Keywords}: tropical curves, divisors, linear systems, canonical embedding, chip-firing games, tropical convexity}
\begin{document}
 \maketitle
 
 \begin{abstract}
 
   A tropical curve $\Gamma$ is a metric graph with possibly unbounded
   edges, and tropical rational functions are continuous piecewise
   linear functions with integer slopes. We define the complete linear
   system $|D|$ of a divisor $D$ on a tropical curve $\Gamma$
   analogously to the classical counterpart. We investigate the
   structure of $|D|$ as a cell complex and show that linear systems
   are quotients of tropical modules, finitely generated by vertices
   of the cell complex.  
   Using a finite set of generators, $|D|$ defines a map from $\Gamma$
   to a  tropical projective space, and the image can be extended to a
   tropical curve of degree equal to $\deg(D)$. The tropical
   convex  hull of the image realizes the linear system $|D|$ as a
   polyhedral complex.  We show that curves for which the canonical
   divisor is not very ample are hyperelliptic.  We also show that the
   Picard group of a $\QQ$-tropical curve is a direct limit of
   critical groups of finite graphs converging to the curve. 
 \end{abstract}
 
 \section{Introduction}

An abstract tropical curve $\Gamma$ is a connected metric graph with
possibly unbounded edges.  A {\em divisor} $D$ on $\Gamma$ is a formal
(finite) $\ZZ$-linear combination $D = \sum_{x \in \Gamma} D(x) \cdot
x$ of points of $\Gamma$.  The {\em degree} of a divisor is the sum of the coefficients,
$\sum_{x} D(x)$.  The divisor is {\em effective} if
$D(x) \ge 0$ for all $x \in \Gamma$; in this case we write $D \ge 0$. 
We call $\supp(D) = \{x \in \Gamma \suchthat D(x) \neq 0\}$ the
support of the divisor $D$.

A {\em (tropical) rational function} $f$ on $\Gamma$ is a continuous
function $f : \Gamma \rightarrow \RR$ that is piecewise-linear on each
edge with finitely many pieces and integral slopes.  The {\em order}
$\ord_x(f)$ of $f$ at a point $x \in \Gamma$ is the sum of outgoing
slopes at $x$.  The {\em principal divisor} associated to $f$ is
$$
(f) := \sum_{x\in\Gamma} \ord_x(f) \cdot x.
$$
A point $x \in \Gamma$ is called a {\em zero} of $f$ if $\ord_x(f) >
0$ and a {\em pole} of $f$ if $\ord_x(f) < 0$.  We call two divisors
$D$ and $D'$ linearly equivalent and write $D \sim D'$ if $D-D'=(f)$
for some $f$.  For any divisor $D$ on $\Gamma$, let  $R(D)$ be the set
of all rational functions $f$ on $\Gamma$ such that the divisor $D +
(f)$ is effective, and $ |D| = \{ D + (f) :  f \in R(D)\},$ the {\em
  linear system} of $D$.  Let $\eins$ denote the set of constant functions on $\Gamma$.
 
The set $R(D)$ is naturally embedded in the set $\RR^\Gamma$ of all
real-valued functions on $\Gamma$, and $|D|$ is a subset of the
$d^{th}$ symmetric product of $\Gamma$ where $d = \deg(D)$.  The map
$R(D)/\eins \rightarrow |D|$ given by $f \mapsto D + (f)$ is a homeomorphism
from $R(D)/\eins$ to $|D|$.  It was shown in
\cite{GathmannKerber,MikhalkinZharkov} that $|D|$ is a cell complex, so is $R(D) / \eins$.  Our aim is to
study the combinatorial and algebraic structure of this object
$R(D)$.

In Section \ref{sec:ChipFiring} we give definitions and state linear equivalence in terms of weighted chip firing moves, which are continuous analogues 
of the chip firing games on finite graphs.  In Section \ref{sec:Gens} we show that $R(D)$ is a finitely generated tropical semi-module and describe a generating set.  In Section \ref{sec:Cell}, we study the cell complex structure of $|D|$.  We show that the vertex set of $|D|$ coincides with the generating set of $R(D)$ described in Section \ref{sec:Gens}.  We give a triangulation of the link of each cell as the order complex of a poset of possible weighted chip firing moves.  

Any set $\cF$ of linearly equivalent divisors induces a map $\phi_\cF$
from the abstract curve to a tropical projective space.  This map is
described in Section \ref{sec:Map} and we show that the tropical
convex hull of the image of this map is piecewise linear isomorphic to
$|D|$.  The image of this map $\phi_\cF$ can be naturally extended to
an embedded tropical curve, and we show in Section \ref{sec:Deg} that
the embedded curve has the same degree as the divisor that we start
with.  In Section \ref{sec:Canonical}, we show that every divisor of
positive degree is ample.  We also show that if the canonical divisor
is not very ample, then the tropical curve is hyperelliptic (but not
vice versa).  Finally in Section \ref{sec:Picard}, we show that the
Picard group of a $\QQ$-tropical curve is the direct limit of Picard
groups for finite graphs obtained by subdividing the edges.

\medskip

{\bf Acknowledgments.}  The work was inspired by discussions at the
Oberwolfach Workshop on Tropical Geometry that took place in December
2007.  Gregg Musiker and Josephine Yu were partially supported by the
National Science Foundation postdoctoral research fellowship.
Christian Haase was supported by Emmy Noether grant HA 4383/1 of the
German Research Foundation (DFG).
We thank  Matt Baker, Michael Kerber, David Speyer, and Bernd
Sturmfels for helpful discussions. 

 \begin{figure}
 \includegraphics[scale = 0.35]{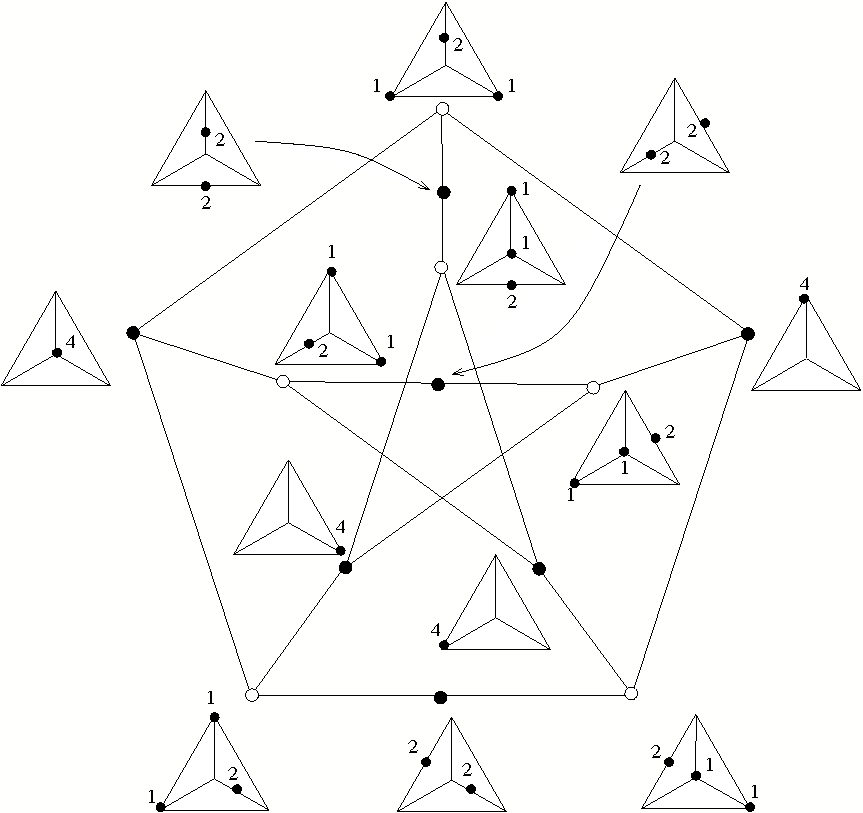}
 \caption{Let tropical curve $\Gamma$ be the complete graph on four vertices with equal length edges.  Let $K$ be the canonical divisor.  The 13 divisors shown here, together with $K$, correspond to the elements of $\cS$ that generate $R(K)$, from Theorem \ref{thm:fingen}.  The seven black dots in the Petersen graph correspond to extremals.  }
 \label{fig:gensK4}
 \end{figure}

\section{Metric graphs, rational functions, and chip-firing}
\label{sec:ChipFiring}

We begin with some notation from \cite{BF}.  We define a {\em weighted
  graph} $G$ to be a finite connected graph with vertex set $V(G)=
\{v_1,v_2,\dots, v_n\}$, edge set $E(G) = \{e_1,e_2,\dots, e_m\}$, and
a collection of positive weights $\{w_{e_1},w_{e_2},\dots, w_{e_m}\}$
associated to the edges of $G$. 
We define the {\em length} of an edge $e$ to be $L_e = \frac{1}{w_e}$.

A {\em metric graph} $\Gamma$ is a compact connected metric space such
that each point $x \in \Gamma$ has a neighborhood $U_x$ isometric to a
star-shaped set of valence $n_x \geq 1$ endowed with the path metric.
To be precise, a star-shaped set of valence $n_x$ is a set of the form
$$S(n_x,r_x) = \{z \in \CC : z = te^{2\pi /n_x} \mathrm{~for~some~} 0
\leq t < r_x \mathrm{~and~}k\in\ZZ\}.$$ 
The points $x \in \Gamma$ with valence different from $2$ are
precisely those where $\Gamma$ fails to look locally like an open
interval.  Accordingly, we refer to a point of valence $2$ as a {\em
  smooth} point. 

Let $V(\Gamma)$ be any finite nonempty subset of $\Gamma$ such that
$V(\Gamma)$ contains all of the points with $n_x \not = 2$. Then
$\Gamma \setminus V(\Gamma)$ is a finite disjoint union of open
intervals. 

For a metric graph $\Gamma$, we say that a choice of such $V(\Gamma)$
gives rise to a {\em model} $G(\Gamma)$ for $\Gamma$.  In particular,
we can define a weighted graph $G= G(\Gamma)$ from this data, letting
$V(G) = V(\Gamma)$ and $E(G)$ be the connected components of $\Gamma
\backslash V(\Gamma)$.  Each edge has a nonzero length inherited from
the metric space $\Gamma$, and we define $w_e = \frac{1}{L_e}$.

Let $V_0(\Gamma) = \{x \in \Gamma : \val(x) \neq 2\}$, where $\val$ denotes the valence of a vertex of $V(\Gamma)$.   
Unless $\Gamma$ is a circle, $V_0(\Gamma)$ gives a model.
For some of our applications, we will need to choose a model whose
vertex set is strictly bigger than $V_0(\Gamma)$.
However unless otherwise specified, the reader may assume that
$G(\Gamma)$ denotes the coarsest model and that a {\em vertex} is an
element of $V_0(\Gamma)$.

A {\em tropical curve} is a metric graph in which the leaf edges may
have length $\infty$.  A leaf edge is an edge adjacent to a one-valent
vertex.  Note that we add a ``point at infinity'' for each unbounded
edge.    A tropical rational function on a tropical curve may attain
values $\pm \infty$ at points at infinity.

We will sometimes refer to an effective divisor $D$ as a {\em chip
  configuration}.  For example, for $D = c_1 \cdot x_1+ \cdots + c_n
\cdot x_n$, we say that there are $c_i$ chips at point $x_i \in
\Gamma$.  The total number of chips is the degree of the divisor. 

We will use the term \emph{subgraph} in a topological sense, that is,
as a compact subset of tropical curve $\Gamma$ with a finite number of connected components.
 For a subgraph $\Gamma' \subset \Gamma$ and a positive real number $l$, the {\em chip firing move} $CF(\Gamma', l)$ by a (not necessarily connected) subgraph is the tropical rational function
 $CF(\Gamma', l)(x)=-\min(l,\dist(x,\Gamma'))$.
 It is constant $0$ on $\Gamma'$, has slope $-1$ in the
 $l$-neighborhood of $\Gamma'$ directed away from $\Gamma'$, and it is constant $-l$ on the rest of the graph.  The chip configuration $D +
 (CF(\Gamma', l))$ is obtained by moving one chip each of $D$ on the
 boundary of $\Gamma'$ along each edge out of the graph by distance
 $l$.  Here we assume that $l$ was chosen to be small enough so that
 the chips do not pass through each other or pass through a non-smooth
 point .  We say that a subgraph $\Gamma' \subset \Gamma$ can {\em
   fire} if for each boundary point of 
   $\Gamma' \cap \overline{\Gamma
   \backslash \Gamma'}$ there are at least as many chips as the number
 of edges pointing out of $\Gamma'$. In other words, the divisor $D +
 (CF(\Gamma', l))$ is effective for some positive real number $l$.

A tropical rational function $f$ is called a {\em weighted chip firing
  move} if there are two disjoint (not necessarily connected) proper
closed subgraphs $\Gamma_1$ and $\Gamma_2$ such that $f$ is constant
on each of them and linear (smooth) with integer slopes on the
complement.  This necessarily means that the complement of $\Gamma_1
\cup \Gamma_2$ consists only of open line segments.  In other words, a collection of $m$ chips can be fired along an edge by distance $l/m$.  A (simple) chip firing move is a special case of a weighted chip
firing move when all the slopes are $0$ or $\pm 1$.

\begin{lemma}
A
 weighted chip firing move 
 is a sum of chip firing moves.
\end{lemma}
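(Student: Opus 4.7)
The plan is to decompose $f$ as an explicit sum of chip firing moves indexed by cyclic layers. After subtracting a constant we may assume $f$ takes the value $0$ on $\Gamma_1$ and $-a$ on $\Gamma_2$ with $a>0$; the case $a=0$ is trivial. Orienting each complement segment $e$ of $f$ from the $\Gamma_1$-side to the $\Gamma_2$-side, write its slope as $-s_e$ with $s_e\in\ZZ_{>0}$, so its length is $L_e=a/s_e$. Set $N=\lcm\{s_e\}$ and $L^*=a/N$, and subdivide each complement segment $e$ into $m_e=N/s_e$ congruent pieces $P^e_1,\ldots,P^e_{m_e}$ numbered from the $\Gamma_1$-end to the $\Gamma_2$-end.

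For each $j\in\{1,\ldots,N\}$ let $k^e_j=((j-1)\bmod m_e)+1$ and set
\[
 \Gamma_1^{(j)}=\Gamma_1\cup\bigcup_e\bigcup_{k<k^e_j}P^e_k,\qquad g_j=CF(\Gamma_1^{(j)},L^*).
\]
If the subdivision has been refined enough that $L^*$ is smaller than half the length of the shortest cycle in $\Gamma$ (refine further by replacing $N$ with $Nr$ and $L^*$ with $L^*/r$ if needed), each $g_j$ is a bona fide chip firing move: it vanishes on $\Gamma_1^{(j)}$, has slope $-1$ on exactly the piece $P^e_{k^e_j}$ of each edge $e$, and equals the constant $-L^*$ beyond.

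To verify $f=\sum_j g_j$, note that both sides vanish on $\Gamma_1$ and both equal $-NL^*=-a$ on $\Gamma_2$. For a point $x$ in piece $P^e_k$ at arclength $t$ from the $\Gamma_1$-end of $e$, the cyclic choice makes exactly $s_e$ of the indices $j$ satisfy $k^e_j=k$ (each contributing $-(t-(k-1)L^*)$), exactly $(k-1)s_e$ satisfy $k^e_j<k$ (each contributing $-L^*$), and the remaining $(m_e-k)s_e$ contribute $0$; the three summands collapse to $-s_e t=f(x)$.

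The main obstacle is justifying the slope statement in the second step, namely that $\dist(x,\Gamma_1^{(j)})$ is measured along $e$ rather than shortened by a cycle wraparound through $\Gamma$. This is the same local-smallness condition already imposed in the excerpt when defining a single chip firing move, and the fine-subdivision device handles it uniformly; everything else is bookkeeping with the cyclic labeling.
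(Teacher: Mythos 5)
Your decomposition is correct and is essentially the paper's own argument: both subdivide each complement segment into pieces of common length $a/\lcm\{s_e\}$ and realize $f$ as the sum of $N=\lcm\{s_e\}$ elementary firings of the nested subgraphs obtained by adjoining, cyclically, the pieces already traversed. Your version merely makes explicit the pointwise bookkeeping (and the fact that the relevant distances $\dist(x,\Gamma_1^{(j)})<L^*$ are always realized along the segment itself, since $\Gamma_1^{(j)}$ never contains the last piece of any segment) where the paper leaves these points implicit.
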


\begin{proof} 

Let $f$ be a weighted chip firing move and $\Gamma_1$ and $\Gamma_2$
be as above.  Then  $$\Gamma \setminus (\Gamma_1 \sqcup \Gamma_2) =
L_1 \sqcup L_2 \sqcup \dots \sqcup L_k$$ where the $L_i$'s are open
line segments between $\Gamma_1$ and $\Gamma_2$. 

Suppose $f(\Gamma_2) - f(\Gamma_1) = \ell > 0$.  Consequently, the 
slope of $f$ on segment $L_i$ (viewed in the direction from 
$\Gamma_1$ to $\Gamma_2$) must be $\mathfrak{s}_i =  \ell/|L_i| \in 
\ZZ$  where $|L_i|$ is the length of edge $L_i$.
Let $\mathfrak{s}$ be the least common multiple of all the
$\mathfrak{s}_i$, and we let the $k_i$'s be the integers such that
$\mathfrak{s} =  k_i \mathfrak{s_i}$.  Then each $L_i$ has length $k_i
u_\Gamma$ where $u_\Gamma = \frac{\ell}{\mathfrak{s}}$.  
 (In other words, by setting $u_\Gamma$ to be a unit length, we obtain
 a rescaled version of $\Gamma$ such that each rescaled $L_i$ is of
 integral length.)

Along line segment $L_i$, we need to move $\mathfrak{s}_i$ chips by a
distance of $k_i u_\Gamma$. A single (unweighted) chip firing of a specific
subgraph containing $\Gamma_1$ will move a single chip a distance of 
$u_\Gamma$ along each line segment of $L_i$ towards $\Gamma_2$. 
In particular, we fire the subgraph containing $\Gamma_1$ as well as
the portion of $L_i$ traversed by chips during the previous firings
until the distance of $k_i u_{\Gamma}$ is achieved, and then repeat
$\mathfrak{s}_i$ times.
It takes $\mathfrak{s}_i k_i = \mathfrak{s}$ such moves on each
segment $L_i$.  Hence we can achieve the weighted chip firing $f$ by
performing $\mathfrak{s}$ elementary chip firings, and $f$ is the sum
of the corresponding rational functions.
\end{proof}

The following lemma makes the connection between $R(D)$ and chip
firing games. 

\begin{lemma}
Every tropical rational function is a usual sum of chip firing moves.  
\end{lemma}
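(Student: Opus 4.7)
The plan is to combine the preceding lemma with a level-set (``layer-cake'') decomposition of $f$. Since the preceding lemma expresses every weighted chip firing move as a sum of (simple) chip firing moves, it suffices to write $f$ as a sum of weighted chip firing moves, up to an additive constant (which is absorbed in $\eins$).

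First I would refine the model of $\Gamma$ enough that $f$ is linear on every edge, and let $c_0 < c_1 < \cdots < c_k$ be the distinct values taken by $f$ at the vertices of this model; these are precisely the values at which the combinatorial type of the sublevel sets of $f$ can change. For each $0 \le i \le k-1$ I set
\[
f_i(x) \;:=\; \min\!\bigl(\,c_{i+1} - c_i,\; \max(f(x) - c_i,\, 0)\,\bigr),
\]
which is continuous and piecewise-linear with integer slopes. A telescoping check at each vertex, extended by linearity along each edge, yields $f - c_0 = \sum_{i=0}^{k-1} f_i$.

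Next I would verify that each $f_i$ is a weighted chip firing move. By construction $f_i \equiv 0$ on $\Gamma_1 := \{f \le c_i\}$ and $f_i \equiv c_{i+1} - c_i$ on $\Gamma_2 := \{f \ge c_{i+1}\}$, and these are disjoint, closed, proper subgraphs of $\Gamma$. On the complement $\{c_i < f < c_{i+1}\}$, the function $f_i$ equals $f - c_i$ and so is linear with the same integer slopes as $f$. Applying the preceding lemma to each $f_i$ then decomposes $f_i$ as a sum of chip firing moves, and hence $f$ is a sum of chip firing moves (up to the constant $c_0$).

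The main obstacle is verifying the structural condition in the definition of a weighted chip firing move: the complement $\Gamma \setminus (\Gamma_1 \cup \Gamma_2)$ must be a disjoint union of \emph{open line segments}, not merely an open subset of $\Gamma$. This is precisely why the $c_i$ must be taken to include \emph{every} vertex value of the refined model --- including any breakpoints of $f$ that happen at smooth points of $\Gamma$ --- so that no breakpoint of $f$ has value strictly between $c_i$ and $c_{i+1}$. Once this choice is made, $\{c_i < f < c_{i+1}\}$ meets each edge of the refined model in either the empty set, the full open edge, or an open sub-interval, which is exactly the required structure; the remaining verifications are essentially bookkeeping.
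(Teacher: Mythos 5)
Your proof is correct and is essentially the paper's argument: the paper splits $f$ at one intermediate level value at a time and inducts, which when unrolled yields exactly your layer-cake decomposition $f - c_0 = \sum_i \min\bigl(c_{i+1}-c_i, \max(f-c_i,0)\bigr)$ into weighted chip firing moves, followed by the same appeal to the preceding lemma.
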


\begin{proof}
Let $f$ be a tropical rational function on $\Gamma$.
Let $S$ be the finite subset of $\Gamma$ consisting of vertices of
$\Gamma$ and the corner locus (zeroes and poles) of $f$.  Let
$f(S) \subset \RR$ be the set of values of $f$ at points in $S$.
We will proceed by induction on the size of $f(S)$.  If $f(S)$ contains only one or two values, then $f$ is either a constant function or already a weighted chip firing move, so the claim is trivial.  Suppose $f(S)$ contains at least three values.  Let $c \in f(S)$ be a value that is neither the maximum or the minimum in $f(S)$.  Let $f_1, f_2$ be new tropical rational functions defined as $f_1(x) = \min(c, f(x))$ and $f_2(x) = \max(c,f(x))$ for all $x \in \Gamma$.   Then $f = f_1 + f_2 - c \eins$ where $c \eins$ denotes 
the constant function that takes value $c$ everywhere.  Let $S_1, S_2$ be the sets consisting the corner locus of $f_1, f_2$ respectively, together with the vertices of $\Gamma$.  Both $f_1(S_1)$ and $f_2(S_2)$ have strictly fewer number of values, and the assertion follows by induction.
\end{proof}

Note that even if we start with a tropical rational function $f \in R(D)$, the sequence of weighted chip firing moves $f_1, \dots, f_{n}$ for which $f = f_1 + \cdots + f_n$  may not be in $R(D)$, i.e.\ the divisors $D + (f_i)$ may not be effective although $D+(f)$ is.

The following proposition follows easily from the previous lemma.
\begin{proposition}
\label{prop:CFM}
Two divisors are linearly equivalent if and only if one can be attained from the other using 
chip firing moves.
\end{proposition}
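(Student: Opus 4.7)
The plan is to derive this proposition almost immediately from the two lemmas just established: every tropical rational function decomposes as a sum of (possibly weighted) chip firing moves, and every weighted chip firing move decomposes as a sum of simple chip firing moves. So there is really no new content, just an assembly step.

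First I would handle the easy direction. Suppose $D'$ is obtained from $D$ by applying a finite sequence of chip firing moves $g_1, g_2, \ldots, g_n$. By definition, each step sends a divisor $E$ to $E + (g_i)$, so telescoping gives
\[
D' = D + (g_1) + (g_2) + \cdots + (g_n) = D + (g_1 + g_2 + \cdots + g_n),
\]
using linearity of $f \mapsto (f)$. Setting $f = g_1 + \cdots + g_n$ shows $D - D' = -(f) = (-f)$, so $D \sim D'$.

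For the converse, assume $D \sim D'$, i.e.\ $D' - D = (f)$ for some tropical rational function $f$. By the preceding lemma, $f$ can be written as a sum $f = h_1 + \cdots + h_m - c\eins$ of weighted chip firing moves (and a constant function, whose principal divisor is zero). By Lemma~1, each $h_j$ is itself a sum of ordinary chip firing moves. Concatenating these decompositions yields $f = g_1 + g_2 + \cdots + g_n$ (modulo a constant that disappears under $(\cdot)$), and then
\[
D' = D + (f) = D + (g_1) + (g_2) + \cdots + (g_n),
\]
exhibiting $D'$ as the result of applying the chip firing moves $g_1, \ldots, g_n$ successively to $D$.

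There is no real obstacle here; the only subtlety worth flagging is the one already noted in the remark preceding the proposition, namely that the intermediate divisors $D + (g_1) + \cdots + (g_i)$ need not be effective even when $D$ and $D'$ are. That is fine, because the proposition is a statement about linear equivalence of divisors, not about staying inside $|D|$; the chip firing moves are used purely as building blocks for rational functions, not as transitions between effective chip configurations.
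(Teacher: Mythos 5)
Your proof is correct and is exactly the assembly the paper intends: the paper simply states that the proposition ``follows easily from the previous lemma'' (that every tropical rational function is a sum of chip firing moves) and omits the details you have written out. Your remark about the intermediate divisors not needing to be effective matches the caveat the paper itself makes just before the proposition.
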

 
 \section{Extremals and Generators of $R(D)$}
 \label{sec:Gens}
 
 The {\em tropical semiring} $(\RR, \oplus, \odot)$ is the set of real numbers $\RR$ with two tropical operations:
 $$
 a \oplus b = \max(a,b), \text{ and } a \odot b = a + b.
 $$ 
 The space $R(D)$ is naturally a subset of the space $\RR^\Gamma$ of real-valued functions on $\Gamma$.  For $f,g \in \RR^\Gamma$, and $a \in \RR$ the tropical sum $f \oplus g$ and the tropical scalar multiplication $a \odot f$ are defined by taking tropical sums and products pointwise.
  
 \begin{lemma}
 The space $R(D)$ is a tropical semi-module, i.e.\ it is closed under tropical addition and tropical scalar multiplication.  
 \end{lemma}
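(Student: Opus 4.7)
The plan is to verify the two closure properties separately, with the tropical scalar multiplication being essentially immediate and the tropical sum requiring a local order computation.

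For closure under tropical scalar multiplication, fix $f \in R(D)$ and $a \in \RR$. Since $a \odot f = a + f$ differs from $f$ by a constant, it has the same slopes everywhere, so $(a \odot f) = (f)$ as divisors. Hence $D + (a \odot f) = D + (f) \geq 0$, so $a \odot f \in R(D)$.

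The real content is closure under $\oplus$. Let $f, g \in R(D)$ and set $h = f \oplus g = \max(f,g)$. First I would observe that $h$ is a tropical rational function: it is continuous (max of two continuous functions), and on any edge it agrees with $f$ where $f \geq g$ and with $g$ where $g \geq f$, so it is piecewise linear with integer slopes and finitely many pieces (the new breakpoints are among the finitely many points where $f = g$ on an edge where $f$ and $g$ are each linear). Then I would check effectivity of $D + (h)$ pointwise at an arbitrary $x \in \Gamma$, splitting into three local cases:
\begin{itemize}
\item If $f(x) > g(x)$, then $h \equiv f$ in a neighborhood of $x$, so $\ord_x(h) = \ord_x(f)$ and $D(x) + \ord_x(h) \geq 0$.
\item If $f(x) < g(x)$, the symmetric argument gives $D(x) + \ord_x(h) = D(x) + \ord_x(g) \geq 0$.
\item If $f(x) = g(x)$, then along each outgoing direction $v$ at $x$, the outgoing slope of $h$ equals the maximum of the outgoing slopes of $f$ and $g$ along $v$. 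Summing over directions gives
\[
\ord_x(h) \;\geq\; \max\bigl(\ord_x(f),\,\ord_x(g)\bigr),
\]
since $\sum_v \max(s^f_v, s^g_v) \geq \max\bigl(\sum_v s^f_v,\,\sum_v s^g_v\bigr)$. Because $D(x) + \ord_x(f) \geq 0$ and $D(x) + \ord_x(g) \geq 0$, the same inequality holds with $\max$ and hence with $\ord_x(h)$.
\end{itemize}
Thus $D + (h) \geq 0$ at every $x$, giving $h \in R(D)$.

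The only delicate step is the third case, where one must be careful that the order of $\max(f,g)$ can \emph{exceed} both $\ord_x(f)$ and $\ord_x(g)$; the key inequality $\sum_v \max(s^f_v, s^g_v) \geq \max(\sum_v s^f_v, \sum_v s^g_v)$ handles this cleanly, and the fact that the right-hand side is already $\geq -D(x)$ closes the argument. The other cases and the scalar-multiplication claim are essentially immediate from the definitions.
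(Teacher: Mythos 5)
Your proof is correct and follows essentially the same route as the paper's: the scalar case via $(a\odot f)=(f)$, and the sum case via the identical three-way split on the sign of $f(x)-g(x)$, with the equality case handled by observing that the outgoing slope of $f\oplus g$ in each direction is the maximum of the two outgoing slopes, hence $\ord_x(f\oplus g)\geq\max(\ord_x(f),\ord_x(g))$. The only additions are the (welcome but routine) check that $f\oplus g$ is a tropical rational function, which the paper leaves implicit.
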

 
  \begin{proof}
It is clear that $(f\odot g) = (f+g) = (f) + (g)$ for tropical rational functions, thus $R(D)$ is closed under tropical scalar multiplication.  
  Let $f, g \in R(D)$ and $x \in \Gamma$.   If $f(x) > g(x)$, then $\ord_x (f\oplus g) = \ord_x(f)$.  If $f(x) < g(x)$, then $\ord_x (f\oplus g) = \ord_x(g)$.  If $f(x) = g(x)$, then 
for each direction $\overrightarrow{v}$,
the outgoing slope of $f\oplus g$ in the neighborhood of $x$ 
in the direction $\overrightarrow{v}$ is the maximum of 
each outgoing slope 
in the direction $\overrightarrow{v}$ of $f$ and $g$, so $\ord_x(f\oplus g) \geq \ord_x(f) $. 
Hence $\ord_x(f \oplus g) + D(x) > 0$ for all $x \in \Gamma$, so $f \oplus g \in R(D)$.
 \end{proof}
 
Tropical semi-modules are also called {\em tropically convex sets} \cite{DevelinSturmfels}.  Since $R(D + (f)) = R(D) + f$, the tropical algebraic structure of $R(D)$ does not depend on the choice of the representative $D$.  An element $f \in R(D)$ is called {\em extremal} if for any $g_1, g_2 \in R(D)$, $f = g_1 \oplus g_2 \implies f = g_1 \text{ or } f = g_2$.   An element $f$ is an extremal if and only if all its tropical scalar multiples $c \odot f$ also are extremals.  Any generating set of $R(D)$ must contain all extremals up to tropical scalar multiplication.

 \begin{lemma}\label{lem:ext}
A tropical rational function $f$ is an extremal of $R(D)$ if and only if there are not two proper 
subgraphs $\Gamma_1$ and $\Gamma_2$ covering $\Gamma$ ($\Gamma_1 \cup \Gamma_2 = \Gamma$) such that each can fire on $D + (f)$.
 \end{lemma}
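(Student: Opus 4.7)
The plan is to prove both implications by contraposition. For ``only if'' (non-extremality given such a covering) I will use the firing subgraphs to build a nontrivial tropical decomposition of $f$. For ``if'' I will start from a nontrivial decomposition $f = g_1 \oplus g_2$ with $g_1, g_2 \ne f$ and extract the required covering from the sets where $g_i$ agrees with $f$.

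For the ``only if'' direction: since each $\Gamma_i$ can fire on $D + (f)$, pick $l_i > 0$ small enough that $h_i := CF(\Gamma_i, l_i)$ makes $D + (f) + (h_i) \ge 0$, and set $g_i := f + h_i$. Then $g_i \in R(D)$, and $g_i \ne f$ because $\Gamma_i \ne \Gamma$ forces $h_i \not\equiv 0$. The chip firing move $h_i$ is nonpositive with zero set exactly $\Gamma_i$; the covering hypothesis $\Gamma_1 \cup \Gamma_2 = \Gamma$ therefore gives $h_1 \oplus h_2 \equiv 0$, hence $g_1 \oplus g_2 = f$, so $f$ is not extremal.

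For the ``if'' direction: assume $f = g_1 \oplus g_2$ with $g_1, g_2 \in R(D)$ and $g_i \ne f$ for both $i$. Set $h_i := g_i - f$ and $\Gamma_i := \{x \in \Gamma : h_i(x) = 0\}$. Since $g_i \le g_1 \oplus g_2 = f$ we have $h_i \le 0$, and at every $x$ at least one of $g_1(x), g_2(x)$ attains $f(x)$, so $\Gamma_1 \cup \Gamma_2 = \Gamma$. Because $h_i$ is piecewise linear with finitely many pieces and not identically zero, each $\Gamma_i$ is a proper compact subgraph with finitely many components.

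The key step is to show that each $\Gamma_i$ can fire on $D + (f)$, which will contradict the hypothesis of the direction and force $g_i = f$ for some $i$. At any boundary point $x$ of $\Gamma_i$, edges continuing inside $\Gamma_i$ carry outgoing slope $0$ for $h_i$ (since $h_i \equiv 0$ on $\Gamma_i$), while along each of the $n_x$ edges leaving $\Gamma_i$ the function $h_i$ starts at $0$ and becomes negative on the adjacent linear piece, so by integrality of slopes the outgoing slope on that piece is $\le -1$. Summing yields $\ord_x(h_i) \le -n_x$, and effectivity $D + (f) + (h_i) \ge 0$ at $x$ then gives $(D + (f))(x) \ge -\ord_x(h_i) \ge n_x$, exactly the firing condition. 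I expect the main subtlety to be this boundary slope argument: one must verify that ``leaving $\Gamma_i$'' really forces the adjacent linear piece of $h_i$ to lie in $\{h_i < 0\}$, so that the integrality of slopes upgrades a merely negative slope to $\le -1$; this is what converts the effectivity of $D + (g_i)$ into the combinatorial firing condition.
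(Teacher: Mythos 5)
Your proof is correct and follows essentially the same route as the paper: for non-extremality you add the chip-firing functions $CF(\Gamma_i,l_i)$ to $f$ and use the covering to get $g_1\oplus g_2=f$, and for the converse you take $\Gamma_i$ to be the locus where $g_i=f$ and show each can fire. Your boundary-slope computation (integrality forcing outgoing slope $\le -1$ off $\Gamma_i$, hence $\ord_x(h_i)\le -n_x$) just makes explicit the step the paper compresses into ``each $\Gamma_i$ can fire $\varepsilon_i$ distance,'' and it is a valid justification.
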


 \begin{proof}
 Suppose there are two such graphs that can fire.  The corresponding rational functions $g_1, g_2$ can be chosen so that $g_i$ is zero on $\Gamma_i$, and they are non-positive.  Since $\Gamma_1 \cup \Gamma_2 = \Gamma$, $g_1 \oplus g_2 = 0$, so $(f + g_1) \oplus (f + g_2) = f$ and $f$ is not an extremal.
 
 Now suppose $f = g_1 \oplus g_2$ for some $g_1, g_2 \neq f$ in $R(D)$.  Let $\Gamma_i$ be the loci where $f = g_i$.  Then $\Gamma_1 \cup \Gamma_2 = \Gamma$.  Let $\varepsilon_i > 0 $ be such that $g_i$ is smooth in the $\varepsilon_i$-neighborhood of $\Gamma_i$, outside of $\Gamma_i$.  Then each $\Gamma_i$ can fire $\varepsilon_i$ distance.
 \end{proof}
  
 A {\em cut set} of a graph $\Gamma$ is a set of points $A \subset \Gamma$ such that $\Gamma \backslash A$ is not connected.  A {\em smooth cut set} is a cut set consisting of smooth points (valent 2 points).  Note that being a smooth cut set depends only on the topology of $\Gamma$ and is not affected by the choice of model $G(\Gamma)$.
   
 \begin{theorem}
 \label{thm:fingen}
Let $\cS$ be the set of rational functions $f \in R(D)$ such that the support of $D + (f)$ does not contain a smooth cut set.  Then
\begin{enumerate}
\item[(a)] $\cS$ contain all the extremals of $R(D)$,
\item[(b)] $\cS$ is finite modulo tropical scaling, and
\item[(c)] $\cS$ generates $R(D)$ as a tropical semi-module.
\end{enumerate}
 \end{theorem}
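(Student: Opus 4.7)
The plan is to prove the three parts in order, using Lemma \ref{lem:ext} throughout to translate between extremality and the existence of firable covering subgraphs.

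For part (a), I argue the contrapositive: if $f \notin \cS$, then $\supp(D+(f))$ contains a smooth cut set $A$. Grouping the components of $\Gamma \setminus A$ into two nonempty classes whose closures are $\Gamma_1, \Gamma_2$ gives proper closed subgraphs with $\Gamma_1 \cup \Gamma_2 = \Gamma$. At each $a \in A$ the valence is $2$, so exactly one edge of each $\Gamma_i$ exits at $a$, and the single chip at $a$ (present since $a \in \supp(D+(f))$) suffices for either subgraph to fire. Lemma \ref{lem:ext} then gives that $f$ is not extremal.

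For part (b), I identify $\cS/\eins$ with $\{E \in |D| : \supp(E) \text{ contains no smooth cut set}\}$ via $f \mapsto D+(f)$. Fix the coarsest model $G(\Gamma)$. The cut-set restriction forbids interior chips on any bridge edge and permits at most one distinct interior chip position per non-bridge edge (two distinct interior points on the same non-bridge edge separate a middle arc). Recording the multiplicities of $E$ at the vertices of $G$ and at the single possible interior chip position on each non-bridge edge then yields finitely many combinatorial types. Within each type, sliding the interior positions is a chip-firing move, and the requirement that the result remain in $|D|$ imposes integer-slope conditions that cut the positional parameters down to a finite set per type. This discretization is the most delicate step; a slicker alternative is to appeal to the polyhedral-complex structure of $|D|$ (Gathmann--Kerber; see also Section \ref{sec:Cell}), under which $\cS/\eins$ is contained in the finite vertex set of a compact polyhedral complex.

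For part (c), given $f \in R(D)$ I aim to show that for each $y \in \Gamma$ there exist $h_y \in \cS$ and $c_y \in \RR$ with $c_y \odot h_y \leq f$ on all of $\Gamma$ and $(c_y \odot h_y)(y) = f(y)$; then $f = \bigoplus_{y \in \Gamma}(c_y \odot h_y)$, which by (b) is a finite tropical sum of elements of $\cS$. To produce $h_y$, I reduce $f$ iteratively: if the current $g \in R(D)$ is not in $\cS$, its support has a smooth cut set yielding $\Gamma_1, \Gamma_2$ as in (a), and I fire whichever $\Gamma_i$ contains $y$ (or either, if $y$ lies in the cut set itself) by the maximal distance that keeps $D+(g)$ effective. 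Because this firing function is $0$ on $\Gamma_i \ni y$ and strictly negative outside, the value at $y$ is preserved while the function strictly decreases elsewhere. At the maximal firing distance some chip either hits a vertex or collides with another, so the combinatorial type of the chip configuration strictly changes; by the finiteness of combinatorial types from (b), the iteration terminates in $\cS$, producing the desired $h_y$.
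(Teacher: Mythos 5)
Your parts (a) and (b) track the paper's proof closely: (a) is the same contrapositive argument via Lemma \ref{lem:ext}, and (b) rests on the same observations (at most one interior chip position per edge, finitely many combinatorial types). Where you wave at ``integer-slope conditions'' to finish (b), the paper is concrete: since the interior support does not disconnect $\Gamma$, its complement contains a spanning tree, and the Gathmann--Kerber bound on slopes of functions in $R(D)$ gives finitely many vertex values modulo scaling; the position of the single chip on each edge is then determined by the values and slopes at the endpoints. Your fallback via the finite vertex set of the polyhedral complex $|D|$ is also legitimate and is essentially the content of the theorem in Section \ref{sec:Cell} identifying $\cS$ inside $\cV$.

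Part (c) is where you genuinely diverge. The paper writes $f = (f+g_1)\oplus(f+g_2)$ directly, where $g_1,g_2$ maximally fire the two sides of a smooth cut set, and recurses on both summands; you instead build, for each $y\in\Gamma$, a minorant $c_y\odot h_y\le f$ with $h_y\in\cS$ touching $f$ at $y$, and recover $f$ as the tropical sum over $y$. Your route needs one routine step you elide: the family $\{c_y\odot h_y\}_{y\in\Gamma}$ is a priori infinite, so you must collect terms with the same $h_y=h$ and replace $c_y$ by $\min_x\bigl(f(x)-h(x)\bigr)$ to land on a finite sum indexed by $\cS/\eins$. The one real soft spot is termination: ``the combinatorial type strictly changes'' together with ``finitely many types'' does not rule out cycling, and in any case (b) only bounds the types occurring in $\cS$, whereas the intermediate configurations in your iteration are precisely the ones \emph{not} in $\cS$. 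The correct invariant --- and the one the paper uses --- is that the number of distinct smooth points in $\supp(D+(g))$ strictly decreases at each maximal firing, since each boundary chip either reaches a vertex or merges with another chip; this nonnegative integer forces termination. With that substitution your argument is complete.
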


 \begin{proof}
\begin{enumerate}
\item[(a)] Suppose $f \notin \cS$, then $D + (f)$ splits $\Gamma$ into two subgraphs $\Gamma_1$ and $\Gamma_2$.   Both of these graphs can fire, and the union of their closures is the entire $\Gamma$, so by Lemma \ref{lem:ext}, $f$ is not an extremal.

 \item[(b)]Let $f \in \cS$.  Then removing the set of edges containing the support of $D + (f)$ does not disconnect $\Gamma$, the remaining edges contain a spanning tree of $\Gamma$.  There are finitely many spanning trees in a graph, and there are finitely many possible slopes for each edge in this spanning tree, because by \cite[Lemma 1.8]{GathmannKerber}, the absolute value of the slopes of $f\in R(D)$ is bounded by a constant depending only on $\Gamma$ and $D$.  Therefore, the possible values of $f$ on vertices of $\Gamma$ is finite modulo tropical scaling.
 
Furthermore, $D + (f)$ cannot have more than one zero on each edge, 
because two zeroes on the same edge form a smooth cut set.  
On each edge, knowing the values and the slopes of $f$ at the two end points uniquely determines $f$, given the fact that all the chips of $D+(f)$ must fall on the same point of a given edge. 
Hence, the values of $f$ and the outgoing slopes at all vertices uniquely determine $f$.  Since $\{f|_V: f \in R(D)\}$ is finite modulo tropical scaling and there are finitely many possible slopes at each vertex, we conclude that $\cS$ is finite modulo tropical scaling.

\item[(c)]Let $f$ be an arbitrary function in $R(D)$.  We need to show that $f$ can be written as a finite tropical sum of elements of $\cS$.  Let $N(f)$ be the number of smooth points in $\supp(D + (f))$.
If $f$ is not already in $\cS$, then there is a smooth cut set $A$ and two components $\Gamma_1$ and $\Gamma_2$.  Let $g_1$ and $g_2$ be weighted chip firing moves that that fire all chips on their boundaries for as far as possible.  Then $f = (f+g_1) \oplus (f+g_2)$.  Repeating this decomposition terminates after finite steps because $0 \leq N(f+g_i) < N(f)$ for each $i = 1,2$.
\end{enumerate}
\end{proof} 

 \begin{proposition}\label{prop:ExtGen}
 Any finitely generated tropical sub-semimodule $M$ of $\RR^\Gamma$ is generated by the extremals.
 \end{proposition}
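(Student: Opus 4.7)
The plan is to pass from the given finite generating set to a minimal one, and then show that every element of a minimal generating set must be an extremal; since the extremals then contain a generating set, they must generate $M$.

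First, let $\{f_1,\dots,f_n\}$ be a finite generating set of $M$. By iteratively discarding any $f_i$ which can be written as $\bigoplus_{j \neq i} c_j \odot f_j$ for some $c_j \in \RR$, I obtain after finitely many steps a minimal subset (which I relabel as $\{f_1,\dots,f_k\}$) that still generates $M$ and in which no $f_i$ is a tropical combination of the others.

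Next, I claim each $f_i$ in this minimal set is extremal. Assume for contradiction that $f_i = g_1 \oplus g_2$ with $g_1, g_2 \in M$ and $g_1, g_2 \neq f_i$. Since $\{f_1, \dots, f_k\}$ generates $M$, I can write $g_\ell = \bigoplus_{j \in S_\ell} c_{\ell j} \odot f_j$ for finite subsets $S_\ell \subseteq \{1,\dots,k\}$ and $c_{\ell j} \in \RR$. Then
\[
f_i \;=\; \bigoplus_{j \in S_1 \cup S_2} c_j \odot f_j, \qquad c_j := \max(c_{1j}, c_{2j}),
\]
with the convention that a coefficient absent from one of the $g_\ell$ is dropped from the max. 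The argument now splits on the coefficient $c_i$ of $f_i$ on the right hand side. If $i \notin S_1 \cup S_2$, or if $c_i < 0$, then the pointwise max at every $x \in \Gamma$ must be attained by some $j \neq i$ (since in the second case $c_i + f_i(x) < f_i(x)$ everywhere), so $f_i = \bigoplus_{j \neq i} c_j \odot f_j$, contradicting minimality. If $c_i > 0$, then $c_i + f_i(x) > f_i(x)$ at every $x$, violating the equation $f_i = \bigoplus_j c_j \odot f_j$. Finally, if $c_i = 0$, then without loss of generality $c_{1i} = 0$, so $g_1(x) \geq 0 + f_i(x) = f_i(x)$ for all $x$; combined with $g_1 \leq g_1 \oplus g_2 = f_i$ pointwise, this forces $g_1 = f_i$, contradicting $g_1 \neq f_i$.

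All cases yield contradictions, so each $f_i$ is extremal. Hence the set of extremals contains the minimal generating set $\{f_1,\dots,f_k\}$, and therefore generates $M$. I expect the main subtlety to be the case $c_i = 0$, which requires distinguishing the contributions of $g_1$ and $g_2$ to the coefficient of $f_i$ and then exploiting the pointwise inequality $g_\ell \leq f_i$ to force the contradiction $g_\ell = f_i$.
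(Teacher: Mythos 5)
Your proof is correct and follows essentially the same route as the paper: express the two decomposands of a putative non-extremal generator in terms of the generating set, analyze the coefficient on that generator itself, and use the pointwise inequality $g_\ell \leq f_i$ to rule out a nonnegative coefficient, concluding that the generator is redundant. The only difference is organizational --- you minimalize the generating set first and then argue by contradiction, whereas the paper strips non-extremals one at a time --- and your three-way case split on $c_i$ just unpacks the paper's one-line claim that the coefficient must be strictly negative.
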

 
 \begin{proof}
 Let $f_1, f_2, \dots, f_n$ be a generating set of $M \subset \RR^{\Gamma}$.  Suppose $f_n$ is not an extremal. Then $f_n = g \oplus h$ for some $g, h \in M$ such that $f_n \neq g$ and $f_n \neq h$.  Since $f_1, \dots, f_n$ generate $M$, we have 
 $$g = (a_1 \odot f_1) \oplus \cdots \oplus (a_{n-1} \odot f_{n-1}) \oplus (a_n \odot f_n) \text{, and }
 h = (b_1 \odot f_1) \oplus \cdots \oplus (b_{n-1} \odot f_{n-1}) \oplus (b_n \odot f_n)$$
 for some $a_1, \dots, a_n, b_1, \dots, b_n \in \RR$.   
 Since $g \leq f_n,~h \leq f_n$ pointwise, and $g \neq f_n,~h \neq f_n$, we must have $a_n < 0$ and $b_n < 0$. 
Then
 $$f_n = g \oplus f = (a_1 \odot f_1) \oplus \cdots \oplus (a_{n-1} \odot f_{n-1}) \oplus (b_1 \odot f_1) \oplus \cdots \oplus (b_{n-1} \odot f_{n-1}),$$
so $f_n$ is in the tropical semi-module generated by $f_1, \dots, f_{n-1}$.  We can remove non-extremals from any finite generating set this way, so $M$ is generated by the extremals.
 \end{proof}
 
 \begin{corollary}
 The tropical semimodule $R(D)$ is generated by extremals.  This generating set is minimal and unique up to tropical scalar multiplication.
 \end{corollary}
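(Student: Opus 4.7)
The plan is to combine Theorem \ref{thm:fingen} and Proposition \ref{prop:ExtGen} for the existence part, and then argue minimality directly from the definition of extremality.

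First, Theorem \ref{thm:fingen} exhibits a generating set $\cS$ of $R(D)$ which is finite modulo tropical scaling. So a finite set of representatives of the equivalence classes of $\cS$ under scaling gives a finite tropical generating set of $R(D)$ as a sub-semimodule of $\RR^{\Gamma}$. Proposition \ref{prop:ExtGen} then applies verbatim and says that $R(D)$ is generated by its extremals. This takes care of the first sentence of the corollary.

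For the minimality and uniqueness statements, I would argue that any tropical generating set of $R(D)$ must contain every extremal up to tropical scalar multiplication. Suppose $f$ is an extremal and $\{h_i\}$ is any generating set. Then $f = \bigoplus_i (a_i \odot h_i)$ for finitely many nonzero terms and some $a_i \in \RR$. A straightforward induction on the number of terms, using the defining property $f = g_1 \oplus g_2 \Rightarrow f \in \{g_1, g_2\}$, shows that $f = a_i \odot h_i$ for some $i$, so $h_i$ is a tropical scalar multiple of $f$. Consequently the set of extremals, taken modulo tropical scaling, is contained in every generating set of $R(D)$.

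Combining the two directions: the extremals (modulo scaling) are a generating set, and they are contained in every generating set, so they form the unique minimal generating set up to tropical scalar multiplication. I do not foresee a significant obstacle here; the only thing to be a little careful about is the base case of the induction (the case of a single term $f = a \odot h_i$ is immediate) and the inductive step, where one uses extremality to discard one of the two summands $\bigoplus_{i<n}(a_i \odot h_i)$ versus $a_n \odot h_n$ before reapplying the hypothesis.
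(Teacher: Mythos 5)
Your proposal is correct and follows essentially the same route as the paper: existence of the extremal generating set comes from combining Theorem \ref{thm:fingen} (after picking representatives modulo scaling to get a genuinely finite generating set) with Proposition \ref{prop:ExtGen}, and minimality/uniqueness comes from the observation—already recorded in the paper just before Lemma \ref{lem:ext}—that any generating set must contain every extremal up to tropical scalar multiplication. Your induction on the number of summands is a correct way to make that observation precise, noting that each partial tropical sum lies in $R(D)$ so the defining property of extremality applies at every step.
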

  
 The set of extremals can be obtained from $\cS$ by removing the elements that do not satisfy the condition in Lemma \ref{lem:ext}.
 
 \begin{example}
 \label{ex:gensK4}
 Let $\Gamma$ be a tropical curve with the complete graph on 4 vertices with equal edge lengths as a model.  Consider the canonical divisor $K$, that is the divisor with value $1$ on the four vertices and zero elsewhere.  Then the set $\cS$ from Theorem \ref{thm:fingen} consists of 14 elements, 7 of which are extremals. See Figure \ref{fig:gensK4}.   

If the edge lengths of the complete graph are not all equal, then the set $\cS$ may be different from this.  
We will describe the cell complex structure of $R(K)$ in the next section, in Example \ref{ex:cellsK4}.

 \end{example}

\section{Cell complex structure of $|D|$}
\label{sec:Cell}

As seen in the previous section, $R(D) \subset \RR^\Gamma$ is finitely generated as a tropical semi-module or a tropical polytope.  However, it is not a polyhedral complex in the ordinary sense.  For example, let $\Gamma$ be the line segment $[0,1]$, and $D$ be the point $1$.  Then $R(D)$ is the tropical convex hull of $f,g \in \RR^{\Gamma}$ where $f(x) = x$ and $g(x)=0$.  Although $R(D)$ is one-dimensional, it does not contain the usual line segment between any two points in it.  
  Letting $\eins$ denote the constant function taking the value $1$ at all points, we consider functions in $R(D)$ modulo addition of $\eins$, i.e. translation.

\begin{lemma}
\label{lem:noConv}
The set $R(D) / \eins$ does not contain any nontrivial usual convex sets.
\end{lemma}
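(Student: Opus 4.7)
The plan is to argue by contradiction and reduce to the case of line segments. A nontrivial usual convex set in $R(D)/\eins$ contains at least two distinct classes, hence contains the image of some nontrivial Euclidean segment $\{(1-t)f + tg : t \in [0,1]\}$ with $f,g \in R(D)$ and $f - g \notin \RR\cdot\eins$. So it suffices to show that under this non-constancy hypothesis there exists $t_0 \in (0,1)$ for which no constant $c$ makes $(1-t_0)f + t_0 g + c\eins$ lie in $R(D)$.

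The key leverage is the integer-slope requirement built into the definition of a tropical rational function. Since $f$ and $g$ are piecewise linear with finitely many pieces on each edge, their difference $f - g$ is also piecewise linear; because $f-g$ is not constant on $\Gamma$, there exists an edge $e$ and an open sub-segment $J \subseteq e$ on which both $f$ and $g$ are linear, with integer slopes $s_f, s_g \in \ZZ$ satisfying $s_f \neq s_g$. On $J$ the interpolation $h_t := (1-t)f + tg$ is linear with slope $s_f + t(s_g - s_f)$.

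Now choose any irrational $t_0 \in (0,1)$. Since $s_g - s_f$ is a nonzero integer, the slope $s_f + t_0(s_g - s_f)$ of $h_{t_0}$ on $J$ is irrational, and in particular not an integer. Because adding a constant $c\eins$ leaves every slope unchanged, $h_{t_0} + c\eins$ fails the integer-slope condition on $J$ for every $c \in \RR$, hence is not a tropical rational function and cannot lie in $R(D)$. This places $[h_{t_0}] \notin R(D)/\eins$, contradicting the assumption that the whole segment survives in $R(D)/\eins$.

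I expect the only mild obstacle is verifying cleanly that $f - g$ being non-constant on $\Gamma$ forces an edge sub-segment with different integer slopes, rather than merely differing at isolated points; this is immediate from piecewise linearity with finitely many pieces, so the argument is essentially a one-line use of irrationality once the setup is in place.
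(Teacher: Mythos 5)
Your proof is correct and follows essentially the same route as the paper's: locate a smooth point (sub-segment) where $f$ and $g$ have distinct integer slopes and pick $t_0\in(0,1)$ so that the interpolated slope is non-integral, hence the combination is not a tropical rational function. Your version is slightly more explicit about why adding $c\eins$ cannot repair the failure (constants do not change slopes), which the paper leaves implicit when passing to the quotient $R(D)/\eins$.
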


\begin{proof}
Let $f, g \in R(D)$ be two tropical rational functions that are not translates of each other.  Then there is a smooth point $x \in \Gamma$ at which the slopes of $f$ and $g$ differ.  Let $0 < \lambda < 1$ be such that the convex combination $\lambda f + (1-\lambda) g$ has a non-integer slope at $x$.  Then $\lambda f + (1-\lambda) g$ is not even a tropical rational function, so it is not in $R(D)$.
\end{proof}
 
 Recall that $R(D)/\eins$, i.e. $R(D)$ modulo tropical scaling can be identified with the linear system $|D| := \{ D+(f) :  f \in R(D) \}$ via the map $f \mapsto D + (f)$.  In what follows, elements of $|D|$ and elements of projectivized $R(D)$ will be used interchangeably.
 
 A choice of model $G(\Gamma)$ induces a polytopal cell
decomposition of $\Sym^d \Gamma$. Andreas Gathmann and Michael
Kerber~\cite{GathmannKerber} as well as Grigory Mikhalkin and Ilia
Zharkov~\cite{MikhalkinZharkov} describe $|D|$ as a cell complex
$|D|_\Gamma \subset \Sym^d \Gamma$. Let us coordinatize this construction.

We identify each open edge $e \in E$ with the interval $(0,\ell(e))$ thereby giving the edge a direction, and  
$\Sym^ke$ with the open simplex $\{ x \in \RR^k \suchthat 0 < x_1 <
\ldots < x_k < \ell(e) \}$. 
A cell of $|D|$ is indexed by the following discrete data:
\begin{itemize}
\item $d_v \in \ZZ$ for every vertex $v \in V$,
\item an ordered composition $d_e = d_e^{(1)} + \cdots + d_e^{(r_e)}$
  for every edge $e$ of $\Gamma$, and
  \item an integer $m_e$ for every edge $e$ of $\Gamma$.
\end{itemize} 
Then, a divisor $D'$ belongs to that cell if
\begin{itemize}
\item $d_v = D'(v)$ for all $v \in V$,
\item $D'$ is given on $e$ by $\sum_i d_e^{(i)} x_i$ for $0 <
  x_1 < \ldots < x_{r_e} < \ell(e)$, and
\item the slope of $f$ at the start of edge $e$ is $m_e$, where $f$ is such that $(f)+D = D'$.
\end{itemize}
The intersection of $|D|$ with an open cell of $\Sym^d\Gamma$ is a union
of cells of $|D|$.

\begin{example}
Let $\Gamma$ be a circle (for example a single vertex $v$ with a loop edge $e$ attached).  Consider $D$ to be the divisor $3v$.  As we analyze in Example \ref{Circle3}, $|D|$ contains two $2$-cells in this case.  These two cells both contain a divisor $D'$ of the form $x + y + z$ with $x$, $y$, and $z$ points on the interior of $e$.  However the two-cells are differentiated from one another by looking at the slope of the function $f$ (defined by $D' = D + (f)$) at vertex $v$.  The outgoing slopes of $f$ at $v$ are given by $[-2,-1]$ and $[-1,-2]$ respectively for these two $2$-cells.  

This example shows that the combinatorial type of divisor $D'$ (without taking into account the slope of $f$) does not determine the combinatorial type of the corresponding cell.
\end{example}

This cell complex structure depends on the choice of the model $G(\Gamma)$, but not on the choice of representative divisor $D$ in the linear system $|D|$.  In particular, choosing a finer model amounts to subdividing the cell complex $|D|$.  Choosing a different divisor $D' = D + (g)$ amounts to changing the integer slopes at the starting points on the edges by the slopes of $g$, but this does not change the cells.

\begin{proposition} \label{cell-dim}
For $D' \in |D|$, and let $I_{D'}$ be the set of points in the support of $D'$ that lie in the interior of edges. Then the dimension of the cell containing $D'$ in its interior is one less than the number of connected components of $\Gamma \backslash I_{D'}$.
\end{proposition}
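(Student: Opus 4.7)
The plan is to compute the cell dimension by analyzing its tangent space at the interior point $D'$ and identifying this tangent space with the coboundary space of an auxiliary graph with $c$ vertices, one per component of $\Gamma \setminus I_{D'}$.

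First, recall that a divisor $D''$ lies in the same cell as $D'$ if and only if it has the same combinatorial data (vertex multiplicities $d_v$, ordered compositions $d_e^{(i)}$ on each edge, and starting slopes $m_e$); only the positions of the chip groups on edge interiors vary. So the cell sits as an open subset of an affine subspace of $\RR^{|I_{D'}|}$, and its dimension equals the dimension of its tangent space at $D'$.

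Next, take a smooth family $D'_t = D + (f_t)$ in the cell with $D'_0 = D'$, normalized so that $f_t$ depends smoothly on $t$ (by fixing its value at a basepoint). Write $f_t$ on the $i$-th piece of edge $e$ as $a_i^e(t) + s_i^e x$; since the slopes $s_i^e$ are constant throughout the cell, the function $g := \frac{d}{dt}\big|_{t=0} f_t$ equals $\dot a_i^e(0)$ on the interior of each piece, hence is locally constant on $\Gamma \setminus I_{D'}$. Continuity of $f_t$ across the moving breakpoint $x_j^e(t)$ reads $a_{j-1}^e(t) - a_j^e(t) = d_e^{(j)} x_j^e(t)$; differentiating at $t=0$ gives $\dot a_{j-1}^e(0) - \dot a_j^e(0) = d_e^{(j)} \delta_j^e$, where $\delta_j^e := \dot x_j^e(0)$ is the chip-group velocity. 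Because each $d_e^{(j)} \geq 1$, the jumps of $g$ across the points of $I_{D'}$ determine the velocities $\delta_j^e$ and vice versa. Thus the assignment $g \mapsto (\delta_j^e)$ identifies the tangent space to the cell with the image of the coboundary map $C^0(H) \to C^1(H)$, where the auxiliary graph $H$ has a vertex for each component of $\Gamma \setminus I_{D'}$ and an edge for each point of $I_{D'}$.

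Finally, since $\Gamma$ is connected, so is $H$: any path in $\Gamma$ between points in different components of $\Gamma \setminus I_{D'}$ projects to a walk in $H$. Hence the image of the coboundary map has dimension $|V(H)| - 1 = c - 1$, yielding the claim. The main point to verify carefully is that every such tangent vector is realized by an actual family $D'_t$ remaining in the cell for small $t$; this follows by reconstructing $f_t$ from a prescribed locally constant $g$ via integration of slopes, and observing that for small enough $t$ the chip groups neither collide nor cross vertices, so the combinatorial type is preserved.
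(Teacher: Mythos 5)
Your proof is correct, and it lands on the same underlying picture as the paper's --- the local degrees of freedom of the cell are exactly the independent vertical translations of $f$ on the components of $\Gamma\setminus I_{D'}$, modulo a global constant --- but you formalize it by a different route. The paper argues via chip-firing: for the lower bound it fires each component $\Gamma_i$ (and its complement) to produce $k$ segments through $D'$ whose directions sum to zero and hence span a $(k-1)$-dimensional affine space, and for the upper bound it appeals to the earlier lemma that every rational function decomposes into weighted chip-firing moves, together with the observation that only unions of the $\Gamma_i$ can fire without changing the combinatorial type. You instead compute the tangent space outright, identifying the admissible breakpoint velocities with (a coordinatewise rescaling of) the image of the coboundary map $C^0(H)\to C^1(H)$ of the component graph $H$, whose rank is $c-1$ by connectivity of $H$. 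This buys a cleaner upper bound --- the consistency of the velocities around cycles of $\Gamma$ is literally the coboundary condition, replacing the paper's somewhat informal ``any nearby configuration in the cell is attained by firing the $\Gamma_i$'s'' --- while the paper's version stays inside the chip-firing language used throughout and exhibits the realizing one-parameter families explicitly. One small caveat: your relation $\dot a_{j-1}^e-\dot a_j^e=d_e^{(j)}\delta_j^e$ uses $\ord_{x_j^e}(f)=d_e^{(j)}$, which tacitly assumes the interior breakpoints avoid $\supp(D)$; this is harmless because the cell structure does not depend on the representative divisor, so one may take $\supp(D)\subseteq V(G)$, but it deserves a sentence.
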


We assume that $\Gamma$ is connected, and being in the {\em interior} of an edge depends on the model $G(\Gamma)$.

\begin{proof}
If $\Gamma \backslash I_{D'}$ is connected, i.e.\ the chips on the interior of edges do not disconnect $\Gamma$, thus there is a connected subgraph of $\Gamma$ whose boundary points all lie in the interior of edges and have at least one chip at each these points.  
Hence, any weighted chip firing move will move chips on the vertices onto the interiors,  so there is no weighted chip firing move that preserves the combinatorial type of $D'$, so $D'$ is a vertex and has 
dimension zero. 

Suppose $\Gamma \backslash I_{D'}$ has $k$ connected components $\Gamma_1, \Gamma_2, \dots, \Gamma_k$, where $k \geq 2$.  
Let $C_i := \overline{\Gamma_i} \cap \overline{\Gamma \backslash \Gamma_i}$ be the set of points in the boundary of $\Gamma_i$ for each $i = 1, \dots, k$.  Points in $C_i$ are in $I_{D'}$, so they lie in 
the interiors of edges.  Then both $\Gamma_i$ and $\Gamma \backslash \Gamma_i$ can fire the chips in $C_i$ for a sufficiently small distance $\epsilon$ while preserving the combinatorial type.  
So $D'$ lies in the interior of these $k$ line segments.  These $k$ segments span a $k-1$ dimensional affine space, so the cell of $D'$ has dimension at least $k-1$. 
Moreover, the only subgraph that can fire $D'$ without changing the combinatorial types are the closures of union of $\Gamma_i$'s, and any tropical rational function is a sum of weighted chip firing moves.  So any point configuration in a neighborhood of $D'$ in the same cell can be attained as by a sequence of firing $\Gamma_i$'s, so the cell has dimension $k-1$.
\end{proof}

\begin{theorem}
Let $\cV$ be the set of vertices of the cell complex $|D|$ and $\cS(\cV) = \{f \in R(D): D + (f) \in \cV \}$.  Then
\begin{enumerate}
\item[(a)] $\cS(\cV)$ contains the set $\cS$ from Theorem \ref{thm:fingen},
\item[(b)] $\cS(\cV)$ is finite modulo tropical scaling, and 
\item[(c)] $\cS(\cV)$ generates $R(D)$ as a tropical semi-module.
\end{enumerate}
\end{theorem}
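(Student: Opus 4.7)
The plan is to deduce all three parts from Theorem \ref{thm:fingen} combined with Proposition \ref{cell-dim}. The essential bridge is the observation that the set $I_{D'}$ appearing in Proposition \ref{cell-dim} consists only of smooth (valence-$2$) points of $\Gamma$: every point in the interior of an edge of the model $G(\Gamma)$ is a smooth point of $\Gamma$, because the vertex set of any admissible model must contain the non-smooth locus $V_0(\Gamma)$.

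For part (a), suppose $f \in \cS$, so that $\supp(D+(f))$ contains no smooth cut set. Since $I_{D+(f)}$ is contained in $\supp(D+(f))$ and consists entirely of smooth points of $\Gamma$ by the bridge observation, $I_{D+(f)}$ itself is not a cut set, i.e.\ $\Gamma \setminus I_{D+(f)}$ is connected. Proposition \ref{cell-dim} then forces the cell containing $D+(f)$ to have dimension $1-1=0$, so $D+(f) \in \cV$, giving $f \in \cS(\cV)$ and thus $\cS \subseteq \cS(\cV)$.

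For part (b), the plan is to argue that $|D|$ is itself a finite polyhedral complex. Each cell is labeled by integer data: nonnegative chip counts $d_v$ at model-vertices and composition parts $d_e^{(i)}$ on edges, summing together to $\deg(D)$, together with one starting-slope integer $m_e$ per edge whose absolute value is bounded in terms of $\Gamma$ and $D$ by \cite[Lemma 1.8]{GathmannKerber}. This labelling data takes only finitely many values, so $\cV$ is finite, which is exactly the statement that $\cS(\cV)$ is finite modulo tropical scaling.

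For part (c), the inclusion $\cS \subseteq \cS(\cV)$ from part (a) combined with Theorem \ref{thm:fingen}(c) immediately implies that $\cS(\cV)$ generates $R(D)$, since enlarging an already-generating set preserves the generating property. The main obstacle, if any, lies in part (a): one must carefully match Proposition \ref{cell-dim}'s notion of ``chips in edge interiors of the model'' with the ``smooth cut set in the support'' characterization of $\cS$, and the bridge observation that edge-interior points of any model are always smooth points of $\Gamma$ is what makes this matching work cleanly regardless of whether one has chosen the coarsest model or a finer one.
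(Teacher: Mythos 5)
Your proof is correct and follows essentially the same route as the paper: part (a) is deduced from Proposition \ref{cell-dim} (your observation that edge-interior points of any admissible model are smooth points of $\Gamma$ is exactly the bridge needed to turn ``no smooth cut set in the support'' into ``$\Gamma \setminus I_{D+(f)}$ connected''), and part (c) is the same immediate consequence of (a) and Theorem \ref{thm:fingen}(c). For (b) the paper simply reruns the spanning-tree argument of Theorem \ref{thm:fingen}(b), whereas you count the finitely many discrete cell labels using the same slope bound from \cite[Lemma 1.8]{GathmannKerber}; both arguments are valid and rest on the same finiteness input.
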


Recall that we identify divisors in $|D|$ with tropical rational functions in $R(D)$ modulo tropical scaling.

\begin{proof}
By the previous proposition, any element of $\cS$ has dimension $0$.  This shows (a), and (c) follows from (a) and Theorem \ref{thm:fingen}(c).  The statement (b) can be shown in the exact same way as Theorem \ref{thm:fingen}(b).
\end{proof}

If the model on $\Gamma$ is the coarsest one, i.e.\ the vertices are the points with valence other than two, then $\cV = \cS$.  If $\Gamma$ is a circle, then there is no coarsest model.

\begin{proposition}
Each closed cell in the cell complex is finitely-generated as a tropical semi-module by its vertices.  
\end{proposition}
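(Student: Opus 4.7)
I would prove this by induction on the dimension $k$ of the closed cell $C$. The base case $k = 0$ is immediate since a zero-dimensional cell is a single vertex. For the inductive step with $k \geq 1$, fix $f \in R(D)$ representing a point of $C$. If $D + (f)$ lies on a proper face $F$ of $C$, then $F$ is a closed cell of dimension strictly less than $k$; by the inductive hypothesis, $f$ lies in the tropical hull of the vertices of $F$, which are also vertices of $C$, and we are done in this case.

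Assume henceforth that $D + (f)$ lies in the relative interior of $C$. By Proposition \ref{cell-dim}, the complement $\Gamma \setminus I_{D+(f)}$ has exactly $k+1$ connected components $\Gamma_1, \ldots, \Gamma_{k+1}$. For each $i$, let $l_i > 0$ be the largest firing distance for which $CF(\Gamma_i, l)$ preserves the combinatorial type of $D+(f)$, and set $f_i := f + CF(\Gamma_i, l_i)$. At $l = l_i$, a chip on the boundary of $\Gamma_i$ either merges with another chip or reaches a non-smooth vertex of $\Gamma$, so $D + (f_i)$ lies on a proper face of $C$. By the inductive hypothesis applied to that face, each $f_i$ lies in the tropical hull of vertices of $C$.

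It remains to show that $f = f_1 \oplus f_2 \oplus \cdots \oplus f_{k+1}$. Since each $CF(\Gamma_i, l_i) \leq 0$ pointwise, we have $f_i \leq f$ and therefore $f_1 \oplus \cdots \oplus f_{k+1} \leq f$. For the reverse inequality, $CF(\Gamma_i, l_i)$ vanishes on $\overline{\Gamma_i}$, so $f_i$ agrees with $f$ on $\overline{\Gamma_i}$; since the closures $\overline{\Gamma_i}$ cover $\Gamma$, at every point $x$ some $f_i(x)$ equals $f(x)$, yielding equality. The step requiring the most care is verifying that firing $\Gamma_i$ by the maximal distance $l_i$ truly lands on a proper face of $C$, rather than drifting into a different open cell of the same dimension; I would handle this by tracking how the merging of two chips or the arrival of a chip at a non-smooth vertex must drop the number of connected components of $\Gamma \setminus I$ by at least one, invoking Proposition \ref{cell-dim} to convert this into a genuine dimension drop.
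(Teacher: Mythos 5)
Your argument for the main containment is essentially the paper's: the paper also fires the subgraphs cut out by the interior support points as far as the combinatorial type allows, writes $f$ as a tropical sum of the resulting functions on the boundary of the cell, and recurses; you merely organize this as an induction on dimension and fire all $k+1$ components at once rather than two complementary pieces at a time. Your identity $f = f_1 \oplus \cdots \oplus f_{k+1}$ is correct (each $CF(\overline{\Gamma_i}, l_i)$ is nonpositive and vanishes on $\overline{\Gamma_i}$, and the closures cover $\Gamma$), and your worry about landing on a proper face is resolved exactly as you suggest: at $l = l_i$ a chip either merges with another or reaches a vertex of the model, which strictly decreases the number of components of $\Gamma \setminus I$, hence the dimension by Proposition \ref{cell-dim}; alternatively, the limit point lies in the closed cell by continuity but not in its relative interior, so it lies in the lower-dimensional skeleton.

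The one genuine omission is that you have proved only half of the statement. Showing that every point of the closed cell lies in the tropical span of its vertices does not by itself show that the cell \emph{is} the tropical semi-module generated by its vertices: you must also verify that the closed cell is closed under tropical addition (closure under tropical scaling is immediate), so that the span of the vertices does not leak outside the cell. The paper handles this first, by observing that two functions $f_1, f_2$ in the same cell have the same starting slopes and the same number of chips on each open edge, hence the same ending slopes, and that $f_1 \oplus f_2$ inherits all of this data and therefore stays in the same cell. You should add this verification; without it the phrase ``generated as a tropical semi-module by its vertices'' is not justified.
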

 
 \begin{proof}
It is clear that each cell is closed under tropical scalar multiplication.
 Let $f_1, f_2$ be two rational functions in the same cell of the cell complex.  Then $f_1$ and $f_2$ have the same starting slopes on each edge, and $D + (f_1)$ and $D+(f_2)$ have the same number chips on each (open) edge.  Then the ending slope of each edge is also determined and equal for $f_1$ and $f_2$.  It is clear that $f_1 \oplus f_2$ have the same starting and ending slopes as $f_1$ and $f_2$, so it must also have the same number of chips as $f_1$ and $f_2$ on each edge.  So $f_1 \oplus f_2$ is in the same cell as $f_1$ and $f_2$.
 
To see the finite generation, we proceed as in the proof of Theorem \ref{thm:fingen}(c).  Let $f$ be a tropical rational function in $R(D)$.  If $\supp(D + (f))$ contains an interior cut set, then we can fire those in each direction as far as possible.  This amounts to writing $f$ as the tropical linear combination of two other functions $g_1, g_2$ in the boundary of the same closed cell.  By repeating the argument, $f$ can be written as a tropical linear combination of the vertices of the closed cell containing it.
 \end{proof}
 
 \begin{example}{(Line Segment)} Any tree is a genus zero tropical curve. Like genus zero algebraic curves, two divisors on a tree are linearly equivalent if and only if they have the same degree $d$.  The simplest tree is a line segment consisting of an edge $e$ between two vertices, say $v_1$ and $v_2$.  In this case, $|D|$ is a $d$-simplex.  The vertices of $|D|$ correspond to ordered pairs $[d_1, d_2]$ summing to $d$ associated to the chip configuration at $v_1$ and $v_2$. 
 \end{example}

 \begin{example}{(Circle)} \label{Circle3}
A circle is the only tropical curve where the canonical divisor $K$ is $0$.  Let $\Gamma$ be homeomorphic to a circle and let $D$ be of degree 3.  Then $D \sim 3 x$ for some point $x \in \Gamma$.  The coarsest cell structure of $R(D)$ is a triangle, but it is not realized by any model on $\Gamma$ because $\Gamma$ does not have a unique coarsest model.   If the model contains only one vertex $v$ and $D \sim 3 v$, then $R(D)$ is a triangle subdivided by a median; see Figure \ref{fig:Cycle3}.  In particular $|D|$ contains four $0$-cells, five $1$-cells, and two $2$-cells.  If the model $G(\Gamma)$ consists of a vertex $u$ such that $D \not\sim 3 u$, then the cell complex structure would be different.
 If the model $G(\Gamma)$ consists of $3$ equally spaced vertices $v_1, v_2, v_3$, and $D \sim 3 v_1$, then $R(D)$ is isomorphic as polyhedral complexes to the barycentric subdivision of a triangle. 
\end{example}

\begin{example}{(Circle with higher degree divisor)}
\label{ex:CircleHighDeg}
Let $\Gamma$ be a circle graph with only a single vertex $v$ and a single edge $e$, a loop based at $v$.  Let $D = dv$; then the linear system $|D|$ is a cone over a cell complex, which we denote as $P_d(circle)$, which has an $f$-vector
given by the following:
$$\mathrm{The~number~of~}i\mathrm{-cells~of~}P_d(circle) = f_i = (i+1) {d \choose i+2}.$$

Consequently, the $f$-vector for $|D|$ is given by
$$\begin{cases}
{d \choose 2} + 1 \mathrm{~if~} i = 0 \\
(i+1) {d \choose i+2} + i {d \choose i+1}  \mathrm{~if~} i\geq 1.
\end{cases}
$$

To see how to get these $f$-vectors, we note that a divisor $D' \sim dv$ corresponds to a tropical rational function $f$ such that $dv + (f) = D'$. One such $f$ is the zero function, this corresponds to the cone point. Each other tropical rational function is parameterized by an increasing sequence
of integer slopes $(a_1, \dots, a_{i+2})$ such that $a_1 < 0$, $a_{i+2} > 0$, and $a_{i+2} - a_1 \leq d$.  The first slope must be negative and the last slope must be positive so that the values of $f$ at the two ends of the loop $e$ agree.  The cells not incident to the cone point are given by sequences $(a_1,\dots, a_{i+2})$ such that all $a_i \not = 0$.  To finish the computation of the $f$-vector for the cell complex $P$ not incident to the cone point we pick an ordered pair $[j,k]$ with $j,k \geq 1$ and
$j+k=i+2$ to denote the number of negative and positive $a_k$'s, respectively. After setting $a_1 = -\ell$, we note that the number of ways to pick the remaining negative $a_k$'s is given by ${\ell-1 \choose j-1}$, and the number of ways to pick a subset of positive $a_k$'s such that $a_{i+2}-a_1 \leq d$ is given by ${d-\ell \choose k}$.  Summing over possible $\ell$, and using a standard identity involving binomial coefficients (for instance see \cite[Identity 136]{BQ}), we obtain ${d \choose i+2}$ such $f$'s for each $[j,k]$  Since there are $i+2$ such $[j,k]$'s, we get the above number of $i$-cells not incident to the cone point.  For the case of $d=4$, see Figure \ref{fig:Cycle4}.

\begin{figure}
\begin{center}
\includegraphics[scale=0.35]{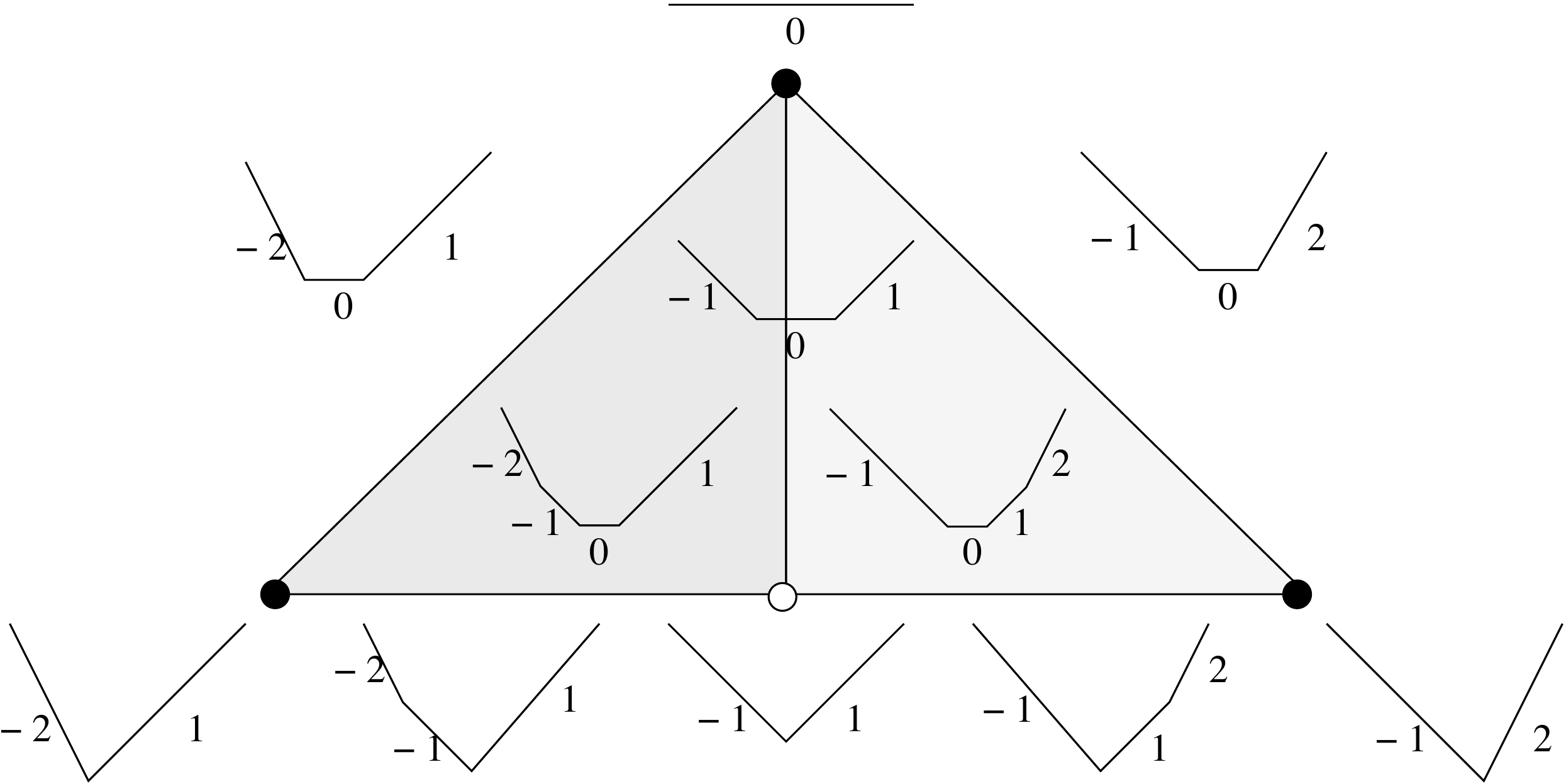}
\end{center}
\caption{The polyhedral cell complex $R(3v)/\eins$ on $\Gamma = S^1$.  The three black vertices are the extremals, and they correspond to the three divisors which are linearly equivalent to $3v$ and have the form $3w$.  We have presented $S^1$ as the line segment $[0,1]$ with points $0$ and $1$ identified.}
\label{fig:Cycle3}
\end{figure}

\begin{figure}
\begin{center}
\includegraphics[scale=0.35]{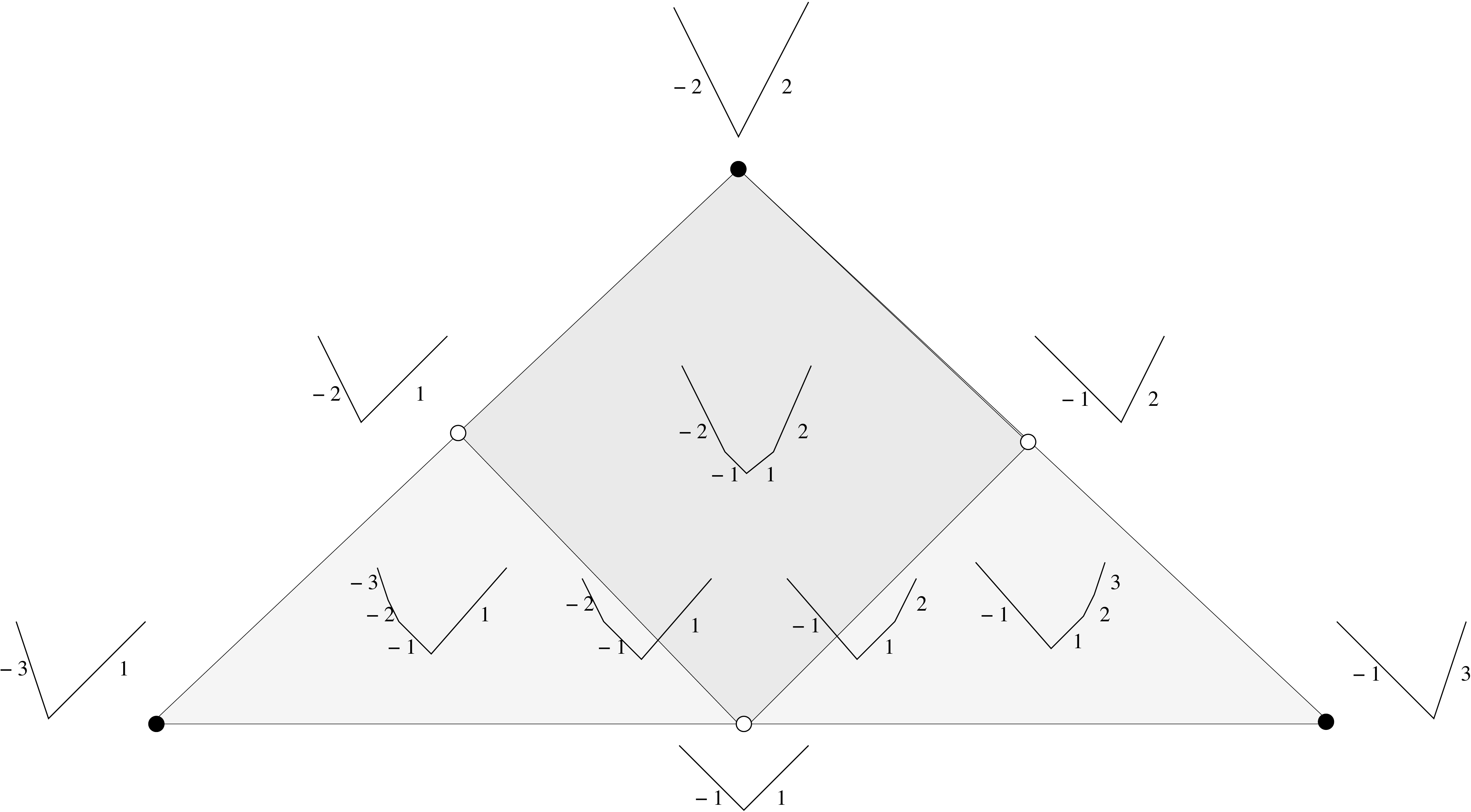}
\end{center}
\caption{The polyhedral cell complex $R(4v)/\eins$ on $\Gamma = S^1$ is a subdivided tetrahedron, a cone over this subdivided triangle with the cone-point corresponding to the constant function.  (The labels of most $1$-cells are suppressed, but may be read off from the incident vertices or $2$-cells.)  The cone-point plus the three black vertices are the extremals.}
\label{fig:Cycle4}
\end{figure}

\end{example}

 \begin{example}{(Circle.  Cell structure of $|D|$ as a simplex)}
 In Examples \ref{Circle3} and \ref{ex:CircleHighDeg}, we saw that having to choose a model, even one with only one vertex, gives $|D|$ a cell structure of a subdivided simplex.  Moreover, different choices of models, even if they contain only one vertex each, may give combinatorially different cell complex structures for $|D|$.  We wish to describe $|D|$ as a simplex.
 
First, let us look at the embedding of $|D|$ in the symmetric product of the tropical curve. 
 Let $\Gamma$ be the circle $\RR / \ZZ$, and $D = d \cdot [0]$ be a divisor of degree $d$.  
 The embedding of $|D|$ in
  $\Sym^d\Gamma = \Sym^d(\RR/\ZZ)$ is given by
$$\{ x \in (0,1]^d \suchthat 0 < x_1 \leq x_2 \leq \dots x_d \leq 1 , ~~  x_1+x_2 + \cdots + x_d \in \ZZ
\}.$$
To see this, first consider a tropical rational function $g$ on the line segment $[0,1]$ with 
$(g) = x_1+x_2 + \cdots + x_d - d \cdot \mathbf{0}$ and $g(\mathbf{1}) = 0$.  Then $g(\mathbf{0}) =  x_1+x_2 + \cdots + x_d$.  If $g(\mathbf{0}) \in \ZZ$, then adding $g$ and a function $l$ with constant slope $g(\mathbf{0})$  on $[0,1]$  gives a tropical rational function $f = g + l$ on the circle with $(f) + D = x_1+x_2 + \cdots + x_d$.  It is easy to check that any $f \in R(D)$ can be obtained this way.  Although this description gives $|D|$ a uniform coordinate system, this does not give us a cell complex structure.

In fact, $|D|$ can be realized as a $(d-1)$-dimensional simplex, on $d$ vertices.  There is a unique set of $d$ points $v_1, v_1, \dots, v_d$ in $\Gamma$ such that $D \sim d v_i$ for all $i = 1, \dots, d$.  These $d$ points are equally spaced along $\Gamma$.  The extremals of $R(D)$ are 
$$\mathcal{E} = \{f \in R(D) : (f) + D = d \cdot v_i \mbox{ for some } i = 1, 2, \dots, d\}.$$  
Consider the $(d-1)$-dimensional simplex on vertices $V = \{dv_1, dv_2, \dots, d v_d \}$, that is, the simplicial complex containing a $(k-1)$-dimensional cell for any $k$ subset of $V$.
We would like to stratify $|D|$ into these cells.  
For any divisor $D' \in |D|$, elements in the same cell as $D'$ are obtained from $D'$ by weighted chip firing moves that do not change the cyclically-ordered composition $d = a_1 + a_2 + \cdots + a_k$ associated to divisor $a_1 x_1 + a_2 x_2 +\cdots + a_k x_k$ where $x_1, x_2, \dots, x_k$ are distinct and cyclically ordered along the circle (with a fixed orientation).   The complement of the support of $D' = a_1 x_1 + a_2 x_2 +\cdots + a_k x_k$ consists of $k$ segments.  For each of these segments, there is a unique extremal in $R(D')$ that is maximal and constant on it.  These $k$ extremals of $R(D')$, which are naturally identified with extremals of $R(D)$, are precisely the vertices of the cell of $D'$ and their convex hull is the cell of $D'$. 
 \end{example}

\begin{example}{($K_4$ continued)}
\label{ex:cellsK4}
As in Example \ref{ex:gensK4}, consider the graph $K_4$ with same edge lengths and the canonical divisor.  The coarsest cell structure of $|K|$ consists of 14 vertices and topologically is the cone over the Petersen graph shown in Figure \ref{fig:gensK4}.  The cone point is the canonical divisor $K$.  The ``cones'' over the 
3 subdivided edges of the Petersen graph are quadrangles.  The maximal cells of $|K|$ consists of 12 triangles and 3 quadrangles.  In particular, $|K|$ is not simplicial.   The quadrangle obtained from ``coning'' over the bottom edge of the Petersen graph is shown in Figure \ref{fig:closeUpQuad}.
\end{example}

\begin{figure}
\begin{center}
\includegraphics[scale=0.35]{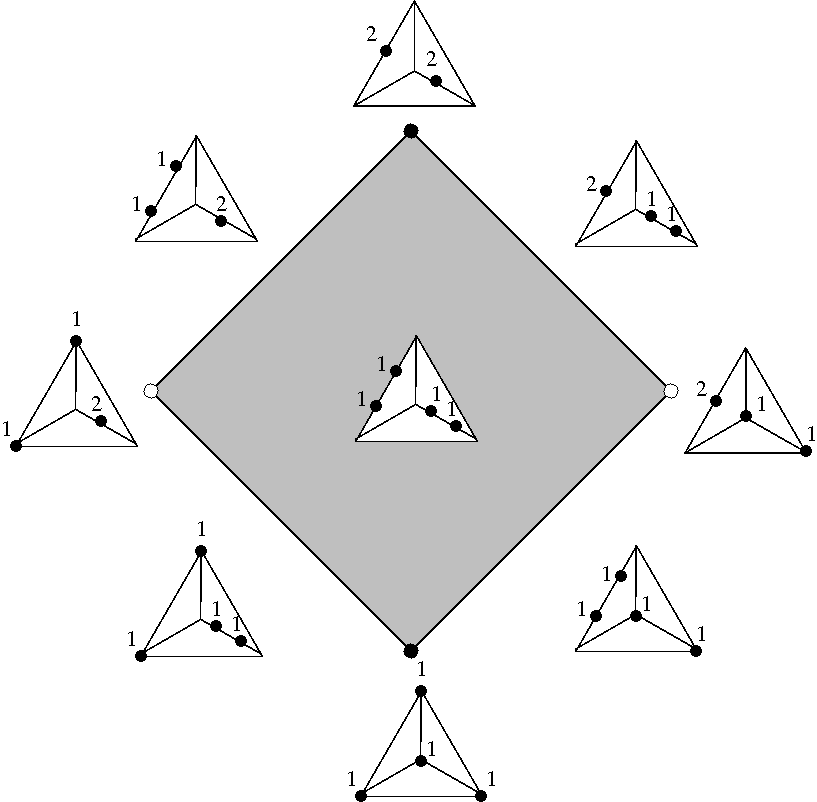}
\end{center}
\caption{A non-simplicial cell in the canonical linear system $|K|$ where $\Gamma$ is the complete graph on four vertices with edges of equal length.}
\label{fig:closeUpQuad}
\end{figure}
 
 \subsection{Local structure of a cell complex}
  
If $B$ is a cell complex and $x$ is a point in $B$, then the $\link(x,B)$ denotes the cell complex obtained by intersecting $B$ with a sufficiently small sphere centered at $x$.
We will define a triangulation of  $\link(D, |D|)$ which is finer than the cell structure.  Note that $|D|$ and $|D'|$ are isomorphic as cell complexes, so $\link(D, |D|) \cong \link(D, |D'|)$ for any $D' \sim D$.

Let $D' \in \link(D, |D|)$ and $f$ be a rational function such that $D' = D + (f)$.  Let $h_0 > h_1 > \cdots > h_n$ be the values taken on by $f$ on the set of points that are either vertices of $\Gamma$ or where $f$ is not smooth.  Notice that $h_0$ and $h_n$ are maximum and minimum values of $f$, respectively.  Since $D + (f) \in \link(D, |D|)$, we may assume that $h_0 - h_n$ is sufficiently small.  Let $G = (\Gamma_0 \subset \Gamma_1 \subset \cdots \subset \Gamma_n = \Gamma)$ be a chain of subgraphs of $\Gamma$ where $\Gamma_i = \{x \in \Gamma : f(x) \geq h_i\}$. 
 
 Let $G' = (\Gamma_1' \subset \Gamma_2' \subset \cdots \subset \Gamma_n' = \Gamma)$ be the chain of {\em compactified} graphs, where $\Gamma_i'$ is the union of edges of $\Gamma_i$ that are between two vertices of $\Gamma$.  Each cell can be subdivided by specifying more combinatorial data: the chain $G'$ obtained this way and the slopes at the non-smooth points.  We call this the {\em fine subdivision}.  
 
 For an effective divisor $D$, we can naturally associate the {\em firing poset} $\cP_D$ as follows.  An element of $\cP_D$ is a weighted chip firing move without the information about the length, i.e.\ it is a closed subgraph $\Gamma' \subset \Gamma$ together with an integer $c_e$ for each out-going direction $e$ of $\Gamma'$ such that for each point $x \in \Gamma'$ we have $\sum c_e \leq D(x)$ where the sum on the left is taken over the all outgoing directions $e$ from $x$ and $D(x)$ denotes the coefficient of $x$ in $D$.
 We say that $(\Gamma', c')  \leq (\Gamma'', c'')$ if $\Gamma' \subset \Gamma''$ and $c'_e \geq c''_e$ for each common outgoing direction $e$ of $\Gamma'$ and $\Gamma''$.
 
 \begin{theorem} \label{finesubdiv}
 The fine subdivision of the link of a divisor $D$ in its linear system $|D|$ is a geometric realization of the order complex of the firing poset $\cP_{D}$.
 \end{theorem}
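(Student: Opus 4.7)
The plan is to set up a bijection between the cells of the fine subdivision of $\link(D,|D|)$ and the chains in the firing poset $\cP_D$, and then verify that this bijection is compatible with face relations so that it lifts to an isomorphism of simplicial complexes.

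For the forward map, given a point $D' = D + (f)$ in the link, I would associate the chain whose $i$-th entry is $(\Gamma_i', c^{(i)})$: here $\Gamma_i'$ is the compactified superlevel set $\{f \geq h_i\}$ introduced in the excerpt, and $c^{(i)}_e$ is the positive integer magnitude of the outgoing slope of $f$ along the outgoing direction $e$ at the boundary of $\Gamma_i'$. Each pair lies in $\cP_D$, because at a boundary point $x$ of $\Gamma_i'$ the effectivity of $D + (f)$ gives $D(x) + \ord_x(f) \geq 0$, which rearranges to $\sum_e c^{(i)}_e \leq D(x)$. The chain relation $(\Gamma_{i-1}', c^{(i-1)}) < (\Gamma_i', c^{(i)})$ holds because the nesting $\Gamma_{i-1}' \subset \Gamma_i'$ is immediate from $h_{i-1} > h_i$, and because for any direction outgoing from both the slope of $f$ along that edge at the shared vertex is intrinsic to $f$, so the two values of $c$ agree and in particular satisfy the required inequality.

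For the inverse, given a chain $(\Gamma_0', c^{(0)}) < \cdots < (\Gamma_n', c^{(n)})$ together with a strictly decreasing sequence of heights $h_0 > \cdots > h_n$, I would reconstruct $f$ by setting $f \equiv h_i$ on the components of $\Gamma_i' \setminus \bigcup_{j<i} \Gamma_j'$ and propagating $f$ linearly across the complementary edges using the integer slopes prescribed by $c^{(i)}$. Well-definedness and effectivity $D + (f) \geq 0$ both follow directly from the firing poset axioms. The heights trace out an open simplex whose dimension matches that of the corresponding simplex in the order complex once the standard normalizations on the link have been imposed (modding out by the additive constant $\eins$ and by overall rescaling). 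Face compatibility follows by inspecting what happens when a gap $h_{i-1} - h_i$ shrinks to zero: the two levels collapse, merging $\Gamma_{i-1}'$ into $\Gamma_i'$ and deleting one element from the chain, which produces precisely the expected sub-chain indexing a codimension-one face in the order complex.

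The main obstacle is the careful bookkeeping of the slope data in both directions of the bijection: verifying that the abstract firing poset data and the local slope structure of $f$ translate faithfully into one another, and that the firing inequalities $\sum_e c_e \leq D(x)$ correspond exactly to effectivity of $D + (f)$ at each boundary vertex of each $\Gamma_i'$. A secondary technical point is the correct normalization used to pass from a cell in $|D|$ to its intersection with the link; this must be handled so that the dimension count matches the combinatorial dimension of the chain as a simplex in the order complex of $\cP_D$.
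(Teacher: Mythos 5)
Your proposal is essentially the paper's own argument: the paper likewise identifies each cell of the fine subdivision with the chain of compactified superlevel sets of $f$ (together with the outgoing slope data) and inverts by performing the corresponding weighted chip firing moves in the order given by the chain, starting from the smallest element. Your write-up supplies somewhat more detail on face compatibility and on why each $(\Gamma_i',c^{(i)})$ satisfies the firing-poset inequality than the paper's (very terse) proof does, but the route is the same.
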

 
 \begin{proof}
 By the discussion above, a cell in a fine subdivision $\link(D, |D|)$ corresponds to a unique chain in the firing poset.  
 For any chain in the firing poset, we can construct an element in $\link(D, |D|)$ by performing the weighted chip firing moves in the order given by the chain, starting from the smallest element.  The element constructed this way defines a cell in the fine subdivision.
 \end{proof}

Note that the link of an element in $|D|$ does not depend on the precise location of the chips, but on the combinatorics of the location.  In other words, changing the edge lengths, without changing which edges the chips are on, does not affect the combinatorics of the link.

This Theorem, along with Proposition \ref{cell-dim} allows us to explicitly describe the $1$-cells incident to a $0$-cell $D'$ of $|D|$.  For this, we need to define a specific subset of the weighted chip-firing moves.  In particular, we call a weighted chip-firing move $f$ (which is constant on $\Gamma_1$ and $\Gamma_2$) to be {\em doubly-connected} if $\Gamma_1$ and $\Gamma_2$ are both connected subgraphs. 

\begin{proposition}  \label{one-cells}
Given $D' \in |D|$, and a model $G$ such that $\supp(D') \subset V(G)$ (so that $D'$ is a $0$-cell in $|D|$), the $1$-cells incident to $D'$ correspond to the set of doubly-connected weighted chip-firing moves that are legal on chip configuration $D'$ (up to combinatorial type).
\end{proposition}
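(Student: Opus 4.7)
The strategy is to combine Theorem~\ref{finesubdiv} with Proposition~\ref{cell-dim}. By Theorem~\ref{finesubdiv}, the fine subdivision of $\link(D',|D|)$ is the order complex of the firing poset $\cP_{D'}$; since $1$-cells of $|D|$ are $1$-dimensional and hence not subdivided further, the $1$-cells of $|D|$ incident to $D'$ are in bijection with the $0$-cells of this order complex, that is, with elements $(\Gamma_1, c) \in \cP_{D'}$.  By construction each such element is the combinatorial type of a weighted chip-firing move legal on $D'$, with firing subgraph $\Gamma_1$ and outgoing slopes $c$.  The remaining task is to pin down which elements give rise to $1$-cells, and to check that this condition amounts to being doubly-connected.

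The geometric heart of the argument is the following counting identity.  For a weighted chip-firing move $f$ with constant-level subgraphs $\Gamma_1, \Gamma_2$ and sufficiently small scaling parameter, the interior-support $I_{D' + (f)}$ consists of one chip at each $\Gamma_2$-side endpoint of an open transition segment.  Removing these chips splits each transition into two half-intervals, one attached to $\Gamma_1$ at its $\Gamma_1$-endpoint and one attached to $\Gamma_2$ at its $\Gamma_2$-endpoint, without merging any pre-existing components.  Hence
\[
\#\pi_0\bigl(\Gamma \setminus I_{D' + (f)}\bigr) \;=\; \#\pi_0(\Gamma_1) + \#\pi_0(\Gamma_2).
\]
Combined with Proposition~\ref{cell-dim}, $D' + (f)$ lies in a $1$-cell precisely when this sum equals $2$, which holds if and only if both $\Gamma_1$ and $\Gamma_2$ are connected, i.e.\ $f$ is doubly-connected.

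This yields both directions of the correspondence.  In the $(\Leftarrow)$ direction, a doubly-connected legal move scaled by $l \in (0, l_0)$ produces divisors in a $1$-cell whose closure contains $D'$ (as $l \to 0^+$).  In the $(\Rightarrow)$ direction, a $1$-cell incident to $D'$ corresponds via Theorem~\ref{finesubdiv} to an element of $\cP_{D'}$, and the counting identity together with Proposition~\ref{cell-dim} force the corresponding move to be doubly-connected.  Distinct elements of $\cP_{D'}$ yield $1$-cells with distinct combinatorial data, giving the bijection up to combinatorial type.  The main subtlety I anticipate is that an element of $\cP_{D'}$ records only the firing subgraph $\Gamma_1$ and its outgoing slopes, while the doubly-connected condition also involves $\Gamma_2$; one must recover $\Gamma_2$ as the complement of $\Gamma_1$ after deleting the short transition neighborhoods and verify that its connectedness coincides with connectedness of $\Gamma \setminus \Gamma_1$.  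Verifying the counting identity carefully --- ensuring that each chip genuinely disconnects its transition segment and that no two transitions re-merge through a shared vertex of $\Gamma$ --- constitutes the main routine work.
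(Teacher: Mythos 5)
Your argument is correct and is essentially the paper's own proof: the paper likewise applies a legal weighted chip-firing move with a small parameter, notes that the resulting divisor has one interior chip on each transition segment between $\Gamma_1$ and $\Gamma_2$, and concludes via Proposition \ref{cell-dim} that the corresponding cell has dimension $\#\pi_0(\Gamma_1)+\#\pi_0(\Gamma_2)-1$, which equals $1$ exactly when the move is doubly-connected. You spell out the component count and the link/firing-poset bookkeeping (via Theorem \ref{finesubdiv}) more explicitly than the paper does, but the underlying idea is the same.
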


\begin{proof}
Let $f$ be a weighted chip-firing move which is legal at $D'$ that is constant on $\Gamma_1$ and $\Gamma_2$ such that $f(\Gamma_2) = f(\Gamma_1) - \epsilon$ for small $\epsilon > 0$.  Then $D''$, defined as $D' + (f)$ has a chip on each of the line segments $L_i$ connecting $\Gamma_1$ and $\Gamma_2$.  Then the dimension of the corresponding cell of $D''$ is one if and only if $\Gamma_1$ and $\Gamma_2$ are both connected.
\end{proof}

 \subsection{Bergman subcomplex of $|K|$}
 
Now we analyze the linear systems of an important family of divisors.  The {\em canonical divisor} $K$ on $\Gamma$ is $$K := \sum_{x \in \Gamma} (\val(x)-2)\cdot x.$$  Notice that since vertices of valence two do not contribute to the divisor $K$, the definition of $K$ does not depend on the choice of model. 
 
 Let $M$ be a matroid on a ground set $E$.  The {\em Bergman fan} of $M$ is the set of $w \in \RR^E$ such that $w$ attains maximum at least twice on each circuit $C$ of $M$.  
 The only matroids considered here are {\em cographic matroids} of graphs.  For a graph $G$ with edge set $E$, the cographic matroid is the 
  matroid on the ground set $E$ whose dependent sets are cuts of $G$, i.e.\ the sets of edges whose complement is disconnected.  
 The {\em Bergman complex} is the cell complex obtained by intersecting the Bergman fan with a sphere centered at the origin.  The following result will be useful to us later.
 
  \begin{theorem} \cite{AK}
  \label{thm:bergman}
 \begin{enumerate}
 \item The Bergman complex (with its fine subdivision) is a geometric realization of the order complex of the lattice of flats of $M$.
  \item  The Bergman fan is pure of codimension $\text{rank}(M)$.
 \end{enumerate}
 \end{theorem}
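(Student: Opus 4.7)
The plan is to realize the Bergman fan $B(M)$ as the support of the fan whose cones are indexed by chains of flats, following the Ardila--Klivans correspondence. Write $e_F \in \RR^E$ for the indicator vector of a subset $F \subseteq E$. For each chain of flats $\Phi\colon F_1 \subsetneq F_2 \subsetneq \cdots \subsetneq F_k$ of $M$, associate the cone
$$
C_\Phi := \RR_{\geq 0}\,e_{F_1} + \cdots + \RR_{\geq 0}\,e_{F_k} + \RR\,\eins \subset \RR^E,
$$
with the sign of each $e_{F_i}$ chosen to match the ``max attained twice'' convention. I would prove both parts simultaneously by establishing $B(M) = \bigsqcup_\Phi \mathrm{relint}(C_\Phi)$; after intersecting with a sphere and quotienting by the lineality line $\RR\,\eins$, the Bergman complex is visibly a geometric realization of the order complex of the lattice of flats of $M$.

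For the inclusion $C_\Phi \subseteq B(M)$, I would verify the circuit condition directly on a generic $w = \sum_i \lambda_i e_{F_i}$. For any circuit $C$, the extreme value of $w$ on $C$ is attained precisely on $C \cap F_{j^\star}$, where $j^\star$ is the least index with $C \cap F_{j^\star} \neq \emptyset$. The key matroid fact is that a flat cannot miss exactly one element of a circuit (else that element would lie in the closure of the rest and hence in the flat), which forces $|C \cap F_{j^\star}| \geq 2$.

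For the reverse inclusion, given $w \in B(M)$, list the distinct values of $w$ as $c_1 > c_2 > \cdots > c_r$ and set $F_i := \{e \in E : w_e \geq c_i\}$. The central lemma to establish is that each $F_i$ is a flat of $M$: a candidate violating pair $e \in \mathrm{cl}(F_i) \setminus F_i$ produces a circuit $C$ with $C \setminus \{e\} \subset F_i$ and $w_e < c_i$, and the Bergman ``attained twice'' condition, tracked carefully through the strict value hierarchy (by taking $i$ minimal among violating indices, or by invoking circuit elimination), forces a contradiction. Once the $F_i$ are known to be flats, the decomposition $w = c_r\,\eins + \sum_{i<r}(c_i - c_{i+1})\,e_{F_i}$ exhibits $w$ as a point of $C_\Phi$ for $\Phi = (F_1 \subsetneq \cdots \subsetneq F_{r-1})$; uniqueness of $\Phi$ is built in, since the chain is recovered from the level sets of $w$. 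I expect this flatness lemma to be the main obstacle, since the direct application of the Bergman condition only guarantees that maxima are attained at least twice, and converting this into full closedness under $\mathrm{cl}$ requires careful bookkeeping of the strict value hierarchy.

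Part (2) then follows automatically: the lattice of flats of $M$ is graded by the matroid rank function, so every maximal chain has the same length $\mathrm{rank}(M)$; all maximal cones $C_\Phi$ thus have the same dimension, and the Bergman fan is pure of the asserted codimension.
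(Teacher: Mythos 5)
This theorem is quoted from Ardila--Klivans \cite{AK}; the paper supplies no proof of its own, so what you have written is a reproof of their result. Your overall strategy --- stratify the fan by cones indexed by chains of flats, and recover the chain from the level sets of $w$ --- is exactly the standard Ardila--Klivans route. However, as written your sketch carries a sign error relative to the paper's ``maximum attained at least twice'' convention, and this is not cosmetic: it makes both of your key claims false.

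For the forward inclusion, with $w=\sum_i\lambda_i e_{F_i}$ and $\lambda_i>0$, the maximum of $w$ on a circuit $C$ is indeed attained exactly on $C\cap F_{j^\star}$; but the matroid fact you invoke controls $|C\setminus F|$, not $|C\cap F|$: a flat cannot contain all but one element of a circuit, so $|C\setminus F|\neq 1$, and nothing forces $|C\cap F_{j^\star}|\geq 2$. Concretely, in $U_{2,3}$ take the chain consisting of the single flat $\{1\}$; then $w=e_{\{1\}}=(1,0,0)$ attains its maximum on the circuit $\{1,2,3\}$ only once, so $C_\Phi\not\subseteq B(M)$ with positive signs. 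For the reverse inclusion, under the max convention the upper level sets $\{e: w_e\geq c_i\}$ need not be flats: $w=(1,1,0)$ lies in $B(U_{2,3})$ but its top level set $\{1,2\}$ is not a flat, and the contradiction you hope to extract does not materialize, since the maximum on the offending circuit can still be attained twice inside $F_i$. Both defects disappear if you carry the sign through consistently: take $C_\Phi=\RR\,\eins+\sum_i\RR_{\geq 0}(-e_{F_i})$, and in the reverse direction show that the \emph{lower} level sets $G_i=\{e: w_e\leq c_i\}$ are flats. The flatness lemma then becomes immediate rather than the main obstacle: if $e\in\mathrm{cl}(G_i)\setminus G_i$, there is a circuit $C\subseteq G_i\cup\{e\}$ with $w_e>c_i\geq w_{e'}$ for every other $e'\in C$, so the maximum of $w$ on $C$ is attained only at $e$, contradicting $w\in B(M)$; and in the forward direction the maximum of $-\sum_i\lambda_i e_{F_i}$ on $C$ is attained on $C\setminus F_j$ for the largest $j$ with $C\not\subseteq F_j$, a set of size at least two by the flat--circuit fact. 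Finally, for part (2) your purity argument via gradedness of the lattice of flats is correct, but ``the asserted codimension'' deserves an actual count: a maximal chain of proper nonempty flats has $\mathrm{rank}(M)-1$ members, so a maximal cone has dimension $\mathrm{rank}(M)$ including the lineality $\RR\,\eins$; you should reconcile this with the codimension stated in the theorem rather than declaring that it follows automatically.
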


Note that adding or removing parallel elements does not change the simplicial complex structure of the Bergman complex because the lattice of flats remains unchanged up to isomorphism.  In particular, if $G_1$ and $G_2$ are two graphs, forming two models of the same tropical curve, then the corresponding cographic matroids have isomorphic Bergman complexes.

\begin{lemma}
\label{lem:circuits}
A subset of edges of a graph forms a flat of the cographic matroid if and only if its complement is a union of circuits of the graph.
\end{lemma}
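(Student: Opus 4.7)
The plan is to reduce the flat condition in the cographic matroid $M^*(G)$ to a simple rank condition in the graphic matroid $M(G)$ via matroid duality. The main tool is the standard rank formula $r_{M^*}(A) = |A| + r_M(E \setminus A) - r_M(E)$ relating the ranks of any matroid $M$ on a ground set $E$ and its dual $M^*$.

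First I would apply this identity with $M = M(G)$. The defining flat condition $r_{M^*(G)}(F \cup \{e\}) > r_{M^*(G)}(F)$ for every $e \in E(G) \setminus F$ then reduces, after cancellation, to $r_{M(G)}(F^c \setminus \{e\}) = r_{M(G)}(F^c)$ for every $e \in F^c$, where $F^c := E(G) \setminus F$. Since removing a single element can change matroid rank by only $0$ or $1$, this equality is equivalent to $e$ lying in the $M(G)$-closure of $F^c \setminus \{e\}$; translated to graph-theoretic language, this is exactly the statement that $e$ lies on some cycle of $G$ contained entirely in $F^c$.

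Second I would record the elementary equivalence: an edge set $S \subseteq E(G)$ is a union of cycles of $G$ if and only if every edge of $S$ lies on a cycle of $G$ contained in $S$. The ``if'' direction follows by writing $S = \bigcup_{e \in S} C_e$ where $C_e \subseteq S$ is a cycle through $e$; the converse is immediate. Combining this with the preceding paragraph yields the lemma: $F$ is a flat of $M^*(G)$ iff every edge of $F^c$ lies on a cycle contained in $F^c$ iff $F^c$ is a union of cycles (i.e.\ circuits) of $G$.

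The only subtlety worth checking is that the cographic matroid as defined in the excerpt (with the edge cuts as dependent sets) really is the matroid dual of the graphic matroid $M(G)$; this is standard, since the bases of $M^*(G)$ are the complements of spanning trees of $G$, so the dependent sets of $M^*(G)$ are exactly those edge subsets whose complement fails to contain a spanning tree, i.e.\ the edge cuts. No deeper obstacle is anticipated: the argument is simply unwinding matroid duality together with the one-line graph-theoretic observation above.
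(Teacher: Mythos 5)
Your proof is correct and follows essentially the same route as the paper's: both reduce the flat condition to a rank computation and conclude that $F$ is a flat iff no edge of the complement is a bridge of the complement, i.e.\ every edge of $F^c$ lies on a cycle within $F^c$. The only cosmetic difference is that you invoke the abstract dual rank formula and matroid closure, whereas the paper writes out the cographic rank explicitly as $|A|+1-c(G\setminus A)$ and speaks of connected components; these are the same computation.
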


\begin{proof}
The rank of a set $A$ of edges is one more than the size of $A$ minus the number of connected components of the complement of $A$.  Hence $A$ forms a flat if and only if there is no other edge outside of $A$ such that removing the edge increases the number of connected components of the complement.  This happens if and only if no connected component of the complement contain a one element cut set, i.e.\ the connected components are union of circuits.  Loops are considered circuits. 
\end{proof}

Suppose $\Gamma$ has genus at least one but $K_\Gamma$ is not effective.  Let $\Gamma'$ be the subgraph of $\Gamma$ obtained by removing all the leaf edges recursively.  Then the canonical divisor $K'$ of $\Gamma'$ is effective, and we can apply the following arguments for $K'$ in $\Gamma'$ or $\Gamma$.
 
 \begin{theorem}
The fine subdivision of $\link(K, |K|)$ contains the fine subdivision of the Bergman complex $B(M^*(\Gamma))$ as a subcomplex.  
\end{theorem}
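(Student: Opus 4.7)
My plan is to reduce the claim to a combinatorial embedding of posets and then exhibit the embedding explicitly. By Theorem~\ref{finesubdiv}, the fine subdivision of $\link(K, |K|)$ is a geometric realization of the order complex of the firing poset $\cP_K$, and by Theorem~\ref{thm:bergman}, the fine subdivision of $B(M^*(\Gamma))$ is a geometric realization of the order complex of the lattice of flats of $M^*(\Gamma)$. Since a poset and its opposite share the same order complex as an abstract simplicial complex, it suffices to produce an order-reversing injection $\phi$ from the (proper part of the) lattice of flats of $M^*(\Gamma)$ into $\cP_K$.

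I would define $\phi$ as follows. For a flat $F$ of $M^*(\Gamma)$, let $\Gamma_F \subseteq \Gamma$ be the closed subgraph supported on the edges in $E \setminus F$, and assign the weight $c_e = 1$ to every outgoing direction $e$ at every boundary point of $\Gamma_F$. Set $\phi(F) := (\Gamma_F, \mathbf{1})$. Well-definedness follows from Lemma~\ref{lem:circuits}: the complement $E \setminus F$ is a union of circuits of $\Gamma$, so every vertex $v$ of $\Gamma_F$ satisfies $\deg_{\Gamma_F}(v) \geq 2$. Hence the number of outgoing edges at $v$ equals $\val(v) - \deg_{\Gamma_F}(v) \leq \val(v) - 2 = K(v)$, and the firing-poset constraint $\sum_e c_e \leq K(v)$ holds at every boundary vertex, so $\phi(F) \in \cP_K$.

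To verify order reversal: if $F \subseteq F'$, then $E \setminus F \supseteq E \setminus F'$, hence $\Gamma_{F'} \subseteq \Gamma_F$, and the weight comparison $c'_e \geq c''_e$ required in $\cP_K$ is automatic since all weights equal $1$. Thus $\phi(F') \leq \phi(F)$ in $\cP_K$. Injectivity is immediate, since $F$ is recovered from $\phi(F)$ as the complement of the edge set of $\Gamma_F$. Consequently $\phi$ induces a simplicial inclusion of the order complex of the lattice of flats into the order complex of $\cP_K$, which is precisely the subcomplex inclusion claimed. The main subtlety I anticipate is the bookkeeping between the proper part of the flat lattice (excluding $\emptyset$ and $E$) and the proper-subgraph requirement in $\cP_K$, but this is routine once the main assignment is fixed.
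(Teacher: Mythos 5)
Your proof is correct and follows essentially the same route as the paper's: both identify the proper part of the lattice of flats of $M^*(\Gamma)$ with the reverse-ordered poset of unions of circuits via Lemma~\ref{lem:circuits}, observe that a union of circuits can always fire on $K$ (each vertex having degree at least $2$ in the union, so the outgoing count is at most $\val(v)-2 = K(v)$), and conclude via the order-complex realizations of Theorems~\ref{finesubdiv} and~\ref{thm:bergman}. Your write-up simply makes the poset embedding and the weight assignment $c_e = 1$ explicit where the paper leaves them implicit.
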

  
 \begin{proof} 
The complement of a flat is a union of cocircuits, so the lattice of flats is isomorphic to the lattice of unions of cocircuits, ordered by reverse-inclusion.  The cocircuits of the cographic matroid are the circuits of the graph.   For the canonical divisor $K$, the proper union of circuits can always fire.  Hence proper part of the poset of union of circuits is a subposet of the firing poset, and so is the proper part of the lattice of flats.
 \end{proof}
 
The Bergman complex may be a proper subcomplex of the link because there may be subgraphs that can fire on the canonical divisor but that are not union of circuits, e.g.\ two triangles connected by an edge in the graph of a triangular prism.  Moreover, if $\Gamma$ is not trivalent, there may be vertices that can fire more than one chip on each edge, so the firing poset may be larger and the dimension of the order complex may also be larger.

\begin{example}
A family of graphs for which $\link(K, |K|)$ is isomorphic to the Bergman complex of the cographic matroid is as the following.  For an integer $n \geq 2$, consider a graph obtained from the cycle graph with $2n$ vertices and $2n$ edges by adding additional $n$ edges between pairs of opposite vertices in the cycle.  For this trivalent graph with the canonical divisor, the only subgraphs that can fire are unions of circuits.  Hence the firing poset is isomorphic to the lattice of unions of circuits, which is anti-isomorphic to the lattice of flats of the cographic matroid by Lemma \ref{lem:circuits}.  By Theorems \ref{finesubdiv} and \ref{thm:bergman}, the fine subdivision of $\link(K, |K|)$ is isomorphic to the Bergman complex of the cographic matroid.
 \end{example}
 
 \begin{example}{($K_4$ continued)}
\label{ex:linkK4}
 
 Let $\Gamma$ be a tropical curve with the complete graph on four vertices as a model, with arbitrary edge lengths.   Consider the canonical divisor $K$.    In this case, the firing poset coincides with the lattice of unions of circuits, which is anti-isomorphic to the lattice of flats.  Hence the link of the canonical divisor is isomorphic to the Bergman complex of the cographic matroid on the complete graph.  Since the complete graph on four vertices is self-dual, its co-Bergman complex is the space of trees on five taxa, which is the Petersen graph \cite{AK}.
 See Figure \ref{fig:linkK4}.
 
In the case when all edge lengths are equal, the quadrangles of $|K|$ described in Example \ref{ex:cellsK4} are subdivided in this fine subdivision of the $\link(K,|K|)$.   Note that the link of the canonical divisor stays the same when we vary the edge lengths, while the generators and cell structure of $R(K)$ in Figure \ref{fig:gensK4} may change.
 
 \begin{figure}
 \includegraphics[scale = 0.35]{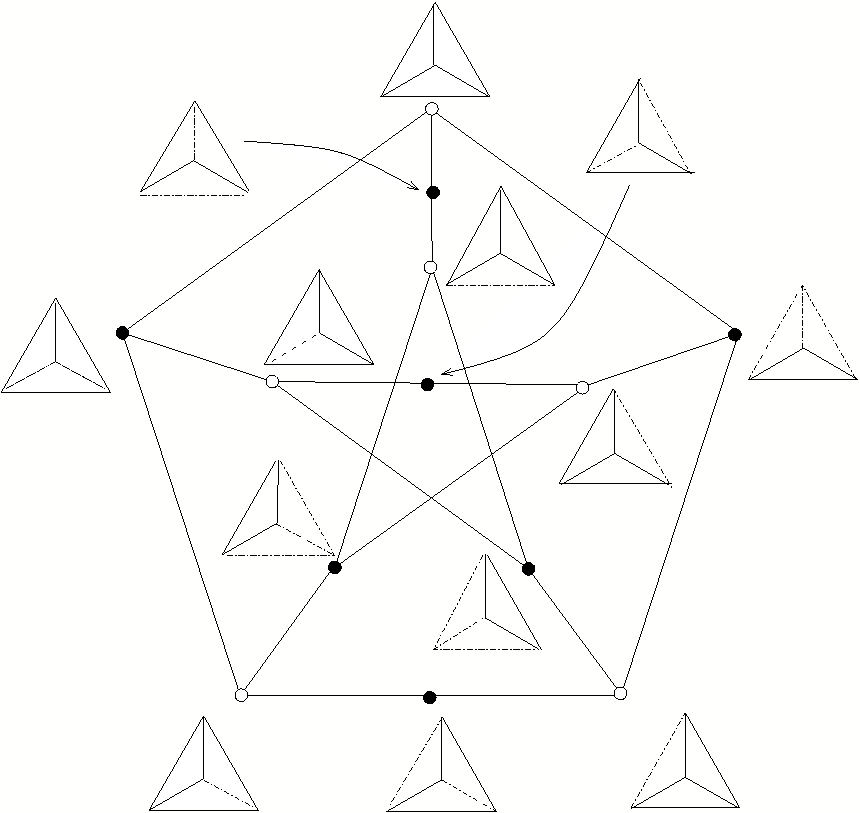}  
 \caption{ Link of the canonical divisor in the canonical class, where $\Gamma$ is the complete graph on four vertices, with arbitrary edge lengths.  Order complex of the firing poset.  The firing subgraphs in $\Gamma$ are shown by solid lines.
 See Example \ref{ex:linkK4}.  Compare with Figure 2 in \cite{AK}.}
 \label{fig:linkK4}
 \end{figure}
 
 \end{example}
 
\section{The induced map and projective embedding of a tropical curve}
\label{sec:Map}

A set $\F=(f_1, \ldots, f_r)$ of generators for $R(D)$ induces a map
$\phi_\F \colon \Gamma \to \TP^{r-1}$, defined as follows: For each $x \in \Gamma$, $\phi_\F(x) \mapsto (f_1(x), \dots, f_r(x))$.  This is a map into $\TP^{r-1}$ rather than $\RR^r$ as we consider $\F$ to be defined up to translation by $\eins$.  

\begin{theorem}
  The linear system $|D|$ (or $R(D)/\eins$) is homeomorphic to the tropical convex hull
  of the image of $\phi_\F$.
\end{theorem}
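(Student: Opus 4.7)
The plan is to exhibit the homeomorphism via two explicit, mutually inverse, continuous maps
\[
\psi \colon R(D)/\eins \longrightarrow \tconv(\phi_\F(\Gamma))
\quad\text{and}\quad
\phi \colon \tconv(\phi_\F(\Gamma)) \longrightarrow R(D)/\eins,
\]
realising the homeomorphism as a Develin--Sturmfels-style tropical duality between the semi-module generated by $\{f_1,\dots,f_r\}\subset\RR^\Gamma/\eins$ and the tropical convex hull generated by $\{\phi_\F(x)\}_{x\in\Gamma}\subset\TP^{r-1}$. Specifically, I will set
\[
\psi(f) := \bigoplus_{x\in\Gamma}(-f(x))\odot\phi_\F(x),
\qquad\text{i.e., }\psi(f)_i=\max_{x\in\Gamma}\bigl(f_i(x)-f(x)\bigr),
\]
which is manifestly a point of $\tconv(\phi_\F(\Gamma))$, and dually
\[
\phi(p)(y) := \bigoplus_i(-p_i)\odot f_i(y) = \max_i\bigl(f_i(y)-p_i\bigr),
\]
a tropical combination of the generators, hence an element of $R(D)$. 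Both maps respect $\eins$-translation and so descend to the quotients.

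The first identity $\phi\circ\psi=\mathrm{id}$ is essentially a restatement of the fact that $\F$ generates $R(D)$. The key observation will be that $-\psi(f)_i = \min_x(f(x)-f_i(x))$ is the largest constant $c_i$ with $c_i\odot f_i\le f$ pointwise. Writing $f=\bigoplus_i a_i\odot f_i$ (possible by hypothesis on $\F$), we have $a_i\le c_i$, so the sandwich
\[
f=\bigoplus_i a_i\odot f_i \;\le\; \bigoplus_i c_i\odot f_i \;\le\; f
\]
collapses and yields $\phi(\psi(f))=\bigoplus_i c_i\odot f_i = f$.

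For the reverse identity $\psi\circ\phi=\mathrm{id}$ on $\tconv(\phi_\F(\Gamma))$, I will expand
\[
\psi(\phi(p))_i \;=\; \max_z\min_j\bigl(f_i(z)-f_j(z)+p_j\bigr).
\]
Setting $j=i$ gives $\psi(\phi(p))_i\le p_i$. For the reverse inequality I will use a finite tropical representation $p=\bigoplus_{x\in S}c_x\odot\phi_\F(x)$; from $p_j\ge c_x+f_j(x)$ uniformly in $j$ one obtains $\min_j(f_i(x)-f_j(x)+p_j)\ge f_i(x)+c_x$, and maximising over $z=x\in S$ yields $\psi(\phi(p))_i\ge\max_{x\in S}(c_x+f_i(x))=p_i$. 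Hence $\psi(\phi(p))=p$.

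Continuity of $\psi$ and $\phi$ should follow directly from continuity of $\max$ and $\min$ of uniformly converging families on the compact set $\Gamma$, so the two maps are mutually inverse continuous bijections, yielding the desired homeomorphism. The main obstacle I anticipate is the verification of $\psi\circ\phi=\mathrm{id}$, which is the core of the tropical duality and hinges on every point of $\tconv(\phi_\F(\Gamma))$ admitting a finite-support representation in terms of $\phi_\F$-values. A secondary subtlety to address is that $f\in R(D)$ may take values $\pm\infty$ at ends of unbounded edges of $\Gamma$; the relevant maxima and minima still make sense using the standard conventions on $\RR\cup\{\pm\infty\}$, but this point needs to be checked when formalising the argument.
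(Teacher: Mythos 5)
Your proof is correct and rests on the same Develin--Sturmfels row/column duality as the paper, but you implement it differently. The paper introduces the unbounded polyhedron $P_\F=\{(y,z) : y_i+z(x)\ge f_i(x)\}$, characterizes its bounded subcomplex $B_\F$ by the two conditions $y=M_\F\odot(-z)$ and $z=(-y)\odot M_\F$, and argues that the two coordinate projections of $B_\F$ are injective with images $R(D)/\eins$ and $\tconv\phi_\F(\Gamma)$. Your maps $\psi$ and $\phi$ are precisely these two conditions read as maps between the two spaces, and you verify directly that they are mutually inverse: $\phi\circ\psi=\mathrm{id}$ from the hypothesis that $\F$ generates $R(D)$ (via the ``largest constant $c_i$ with $c_i\odot f_i\le f$'' sandwich), and $\psi\circ\phi=\mathrm{id}$ from a finite tropical representation of $p$. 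Both computations are sound. What your route buys is a self-contained argument that makes the surjectivity of the two projections explicit --- a point the paper's proof asserts rather than checks --- at the cost of the geometric picture of $|D|$ as the bounded complex of a polyhedron. One small step you should make explicit: $\psi(f)$ is a priori a tropical combination over the infinite index set $\Gamma$, and membership in $\tconv\phi_\F(\Gamma)$ (finite combinations) requires observing that each $f_i-f$ is piecewise linear with finitely many pieces, so the supremum is attained on the finite set of breakpoints and vertices; this also supplies the finite set $S$ you invoke in the second identity. Finally, the issue you flag about points at infinity is genuine --- for $f,f_i\in R(D)$ the difference $f_i-f$ can tend to $+\infty$ along an unbounded edge, making $\psi(f)_i$ infinite --- but the paper's proof suffers from exactly the same degeneracy (its condition $y_i=\max_x(f_i(x)-z(x))$ breaks identically), so this is not a defect of your approach relative to the paper's.
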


The {\em tropical convex hull} of a set is the tropical semi-module generated by the set.

\begin{proof}
  The intuition behind this theorem is the result
  from~\cite{DevelinSturmfels} that the tropical convex hull of the rows of a
  matrix is isomorphic to the tropical convex hull of the columns. Here, the matrix $M_\cF$ in
  question has entry $f_i(x)$ in row $i$ and column $x$.
  As in~\cite{DevelinSturmfels}, we define a convex set 
  $$P_\F = \{
  (y,z) \in (\RR^r \times \RR^\Gamma)/(\eins,-\eins) \suchthat y_i + z(x) \ge
  f_i(x) \}.$$
Let $B_\cF$ be the union of bounded faces of $P_\cF$, i.e.\ $B_\cF$ contains points in the boundary of $P_\cF$ that do not lie in the relative interior of an unbounded face of $P_\F$ in $ (\RR^r \times \RR^\Gamma)/(\eins,-\eins)$.
  We will show that $B_\cF$ projects bijectively
  onto $R(D)/\eins \subset \RR^\Gamma/\eins$ on the one hand, and to $\tconv \phi_\F(\Gamma)
  \subset \TP^{r-1}$ on the other, establishing a homeomorphism.

As in~\cite{DevelinSturmfels}, we associate a {\em type} to $(y,z) \in
  P_\F$ as follows:
    $$\type(y,z) := \{(i,x) \in [r] \times \Gamma: y_i + z(x) = f_i(x) \}.$$
In other words, a type is a collection of defining hyperplanes that contains $(y,z)$, so elements in the relative interior of the same face have the same type.
The recession cone of $P_\cF$ is $\{(y,z) \in (\RR^r \times \RR^\Gamma)/(\eins,-\eins) \suchthat y_i + z(x) \ge 0 \}$, which is the quotient of the positive orthant in $(\RR^r \times \RR^\Gamma)$ by $(\eins,-\eins)$.  Hence, a point $(y,z) \in P_\cF$ lies in $B_\F$ if and only if we cannot add arbitrary positive multiples of any coordinate direction to it while staying in the same face of $P_\cF$, which means keeping the same type.  This holds if and only if 
  \begin{enumerate}
  \item projection of $ \type(y,z) $ onto $[r]$ is surjective, and
  \item projection of $ \type(y,z) $ onto $\Gamma$ is surjective.
  \end{enumerate}
  For $(y,z) \in P_\F$, these two conditions are equivalent to 
    \begin{enumerate}
  \item $y_i = \max\{f_i(x) - z(x): x \in \Gamma\}$ , i.e.\ $y = M_\cF \odot -z$, and 
  \item $z(x) = \max\{f_i(x) - y_i : i \in [r]\}$, i.e.\ $z = -y \odot M_\cF$.
  \end{enumerate}
 where $M_\cF$ is the $[r] \times \Gamma$ matrix with entry $f_i(x)$ in row $i$ and column $x$, and $\odot$ is tropical matrix multiplication.  These two conditions respectively imply that the projections of $B_{\cF}$ onto $\RR^\Gamma/{-\eins}$ and $\RR^r/\eins$ are one-to-one.  The images are $R(D)/\eins$ and the tropical convex hull of $\image(\phi_\cF)$ respectively, so they are homeomorphic.
\end{proof}

\begin{remark}
All of the bounded faces of the convex set $P_\cF$ are in fact vertices.  If the union of bounded faces $B_\cF$ contained a non-trivial line segment, then its projection $R(D) / \eins$ would as well, contradicting Lemma \ref{lem:noConv}.
\end{remark}

\begin{example}[Circle, degree 3 divisor]
Let $\Gamma$ be a circle of circumference $3$, identified with $\mathbb{R} / 3 \ZZ$ and let $D$ be the degree 3 divisor $[0] + [1] + [2]$.  Let $f_0, f_1, f_2 \in R(D)$ be extremals corresponding to divisors $3\cdot [0], 3 \cdot [1],$ and $3 \cdot [2]$ respectively, and suppose $f_i ([i]) = -1$ for each $i=0,1,2$.  Then the image of $\Gamma$ under $\phi_\cF$, for $\cF = (f_0, f_1,f_2)$ is a union of three line segments between the points 
$$\phi_{\cF}([0])=(-1,0,0), ~~~ \phi_{\cF}([1]) = (0,-1,0), ~~~ \phi_{\cF}([2]) = (0,0,-1) ~~~ \mbox{ in } \TP^3 . $$
In this case, the (max-) tropical convex hull of the image of $\phi_\F$ coincides with the usual convex hull and is a triangle  However, it is not the tropical convex hull of any proper subset of $\image(\phi_\F)$.  In particular, $|D|$ is not a tropical polytope, 
 i.e.\ it is not the tropical convex hull of a finite set of points.
\end{example}

We know from \cite{DevelinSturmfels} that tropically convex sets are contractible.

\begin{corollary}
The sets $|D|$ and $R(D)$ are contractible.
\end{corollary}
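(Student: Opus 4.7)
My plan is to obtain both contractibility statements as essentially direct corollaries of the theorem proved just above (that $|D|$ is homeomorphic to the tropical convex hull of $\phi_\cF(\Gamma) \subset \TP^{r-1}$), together with the quoted fact from \cite{DevelinSturmfels} that tropically convex subsets of $\TP^{r-1}$ are contractible.

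First I would handle $|D|$. By Theorem of the previous subsection, for any finite generating set $\cF = (f_1, \ldots, f_r)$ of $R(D)$ (which exists by Theorem \ref{thm:fingen}), the space $|D| \simeq R(D)/\eins$ is homeomorphic to $\tconv\bigl(\phi_\cF(\Gamma)\bigr) \subset \TP^{r-1}$. This target is a tropically convex set in a finite-dimensional tropical projective space, so the Develin--Sturmfels contractibility result applies directly, and contractibility transports across the homeomorphism.

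For $R(D)$ itself, my plan is to exhibit $R(D)$ as a trivial $\RR$-bundle over $R(D)/\eins = |D|$. Concretely, fix any point $x_0 \in \Gamma$ and consider the map
\[
\Phi \colon R(D) \longrightarrow (R(D)/\eins) \times \RR, \qquad f \longmapsto \bigl(\,[f - f(x_0)\cdot \eins]\,,\; f(x_0)\bigr).
\]
This is a homeomorphism with continuous inverse $(g, c) \mapsto g + c\cdot \eins$, where we view $g$ as its distinguished representative with value $0$ at $x_0$. Since $|D|$ is contractible by the previous paragraph and $\RR$ is contractible, the product is contractible, and hence so is $R(D)$.

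There is essentially no obstacle here; the entire content of the corollary is packaged inside the preceding theorem and the Develin--Sturmfels fact. The only minor point to verify carefully is that the evaluation $f \mapsto f(x_0)$ is continuous on $R(D) \subset \RR^\Gamma$ in whatever topology is being used (pointwise convergence, or equivalently the cell-complex topology inherited from the identification with $|D|$), which is immediate. No further calculation is required.
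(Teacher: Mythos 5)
Your proposal is correct and follows essentially the same route the paper intends: the corollary is stated immediately after the homeomorphism $|D| \cong \tconv(\phi_\cF(\Gamma))$ precisely so that the Develin--Sturmfels contractibility of tropically convex sets applies. The paper leaves the passage from $|D|$ to $R(D)$ implicit, and your explicit splitting $R(D) \cong (R(D)/\eins) \times \RR$ via $f \mapsto ([f - f(x_0)\eins], f(x_0))$ is a clean and valid way to supply that step.
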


Tropical linear spaces are tropically convex \cite{Speyer}, so any tropical linear space containing the image $\phi_\F(\Gamma)$ must also contain its tropical convex hull.

\begin{corollary}
Any tropical linear space in $\TP^{r-1}$ containing  $\phi_\F(\Gamma)$ has (projective) dimension at least the dimension of $|D|$.
\end{corollary}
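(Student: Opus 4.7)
The plan is to reduce directly to the preceding theorem and the sentence immediately preceding the statement. First I would let $L \subseteq \TP^{r-1}$ be an arbitrary tropical linear space containing $\phi_\F(\Gamma)$. By the already-quoted fact that tropical linear spaces are tropically convex \cite{Speyer}, $L$ must contain every tropical linear combination of its points, and in particular the tropical convex hull $\tconv\phi_\F(\Gamma)$. The preceding theorem identifies this hull with $|D|$ via a homeomorphism, so $L$ contains a subset homeomorphic to $|D|$, which therefore has topological dimension equal to $\dim |D|$.

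The second step is to match the \emph{projective} dimension of a tropical linear space with its topological dimension. A tropical linear space of projective dimension $d$ in $\TP^{r-1}$ is a pure $d$-dimensional polyhedral complex (this is standard, and for cographic/Bergman linear spaces already follows from Theorem \ref{thm:bergman}(2) applied to the appropriate matroid), so its topological dimension as a subset of $\TP^{r-1}$ equals its projective dimension. Combining the two steps gives
\[
  \dim L \;=\; \dim_{\mathrm{top}} L \;\geq\; \dim_{\mathrm{top}} \tconv\phi_\F(\Gamma) \;=\; \dim |D|,
\]
which is exactly the desired conclusion.

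The main obstacle, such as it is, is just being precise about the three notions of dimension in play: the projective dimension of $L$, the topological dimension of $\tconv\phi_\F(\Gamma)$, and the dimension of $|D|$ as a cell complex. The equality of the first two is the standard polyhedral description of tropical linear spaces, and the equality of the latter two is built into the homeomorphism produced by the preceding theorem. There is no new geometric content beyond these two inputs; the corollary simply packages the homeomorphism $\tconv\phi_\F(\Gamma)\cong|D|$ together with the tropical convexity and polyhedrality of tropical linear spaces, which is why it is stated as a corollary rather than a theorem.
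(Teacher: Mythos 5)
Your argument is correct and is precisely the one the paper intends: the corollary is stated without proof immediately after the observations that tropical linear spaces are tropically convex (hence contain $\tconv\phi_\F(\Gamma)$) and that this hull is homeomorphic to $|D|$, so your write-up simply makes that implicit one-line deduction explicit. The only addition you make --- checking that the projective dimension of a tropical linear space agrees with its topological dimension as a polyhedral complex --- is a reasonable piece of bookkeeping (standard from Speyer's description of tropical linear spaces as pure polyhedral complexes), and does not change the route.
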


\section{Embedded and Parameterized Tropical Curves}
\label{sec:Deg}

An {\em embedded tropical curve} $C$ is a one-dimensional polyhedral complex in $\TP^{n-1} = \RR^n / \eins$ with rational slopes, together with a {\em multiplicity} $m_e \in \ZZ_{>0}$ for each edge $e$ such that the following {\em balancing condition} holds.  For any point $x \in C$, we have $\sum{m_e v_e} = 0$ in $\TP^{n-1}$ where the sum is taken over all edges $e$ containing $x$ and $v_e \in \TP^{n-1}$ is the {\em primitive} integral vector in direction $e$ pointing away from $x$.  A vector $v \in \TP^{n-1}$ is called {\em primitive} if it has an integral representative in $\ZZ^n$ and generates the semigroup $(\ZZ^n \cap (\RR_+ v + \RR [\eins])) / \ZZ[\eins]$.

Suppose the embedded tropical curve $C$ has $k$ unbounded rays in primitive directions $v_1, \dots, v_k \in \TP^{n-1}$, with multiplicities $m_1, \dots, m_k$ respectively.  For each $v_i$, choose the unique representative in $\ZZ^n$ such that all the coordinates are non-positive, and the maximum coordinate is zero.  Because of the balancing condition, the sum $\sum_{i = 1}^k m_i v_i = - d \cdot \eins$ for a positive integer $d$.  This integer $d$ is called the {\em degree} of the embedded tropical curve $C$. 

A {\em parameterized tropical curve} is a tuple $(\Gamma, h)$ where
$\Gamma$ is an abstract tropical curve and $h : \Gamma \rightarrow
\TP^{n-1}$ is piecewise-linear with integer slopes and the following
{\em balancing condition} holds.  For any finite point $x \in \Gamma$,
$\sum m_e v_e = 0$ in $\TP^{n-1}$ where the sum is taken over all
out-going directions $e$ at $x$ in $\Gamma$, $v_e$ is primitive, $m_e$
is a positive integer, and $h : e \rightarrow \TP^{n-1}$ is the affine
linear map $h: t \mapsto t \, m_e \, v_e + h(x)$ where a neighborhood
of $x$ along $e$ is identified with the real interval $[0,l)$.

A point $x\in h(\Gamma)$ is called {\em smooth} if the fiber
$h^{-1}(x)$ is finite and consists of points at which $h$ is smooth.
The smooth points are dense in $h(\Gamma)$.  Let $e$ be an edge of
$h(\Gamma)$ and let $x$ be a smooth point on it.  Let the {\em
  multiplicity} of $e$ in $h(\Gamma)$ be $\sum_{y \in h^{-1}(x)} m_y$
where $h$ looks like $t \mapsto t\,m_y\, v + x$ locally along any
of the two out-going directions from $y$ and $v$ is primitive.  With
this definition of multiplicity, the image $h(\Gamma)$ is an embedded
tropical curve as defined earlier.  However, the statement that
$(\Gamma,h)$ is a parameterized tropical curve is stronger than the
statement that $h(\Gamma)$ is an embedded tropical curve.  The {\em
  degree} of a parameterized tropical curve $(\Gamma, h)$ is the
degree of the embedded tropical curve $h(\Gamma)$.

Let $\Gamma$ be an abstract tropical curve and $D$ be a divisor such that $R(D)$ is not empty and not equal to $\eins$. Choose a nonempty finite set of sections $\F = \{f_1, \dots, f_n\} \subset R(D)$; then we get a map $\phi_\F : \Gamma \rightarrow \TP^{n-1}$.  
In general, the tuple $(\Gamma, \phi_{\F})$ is not a parameterized tropical curve because it does not satisfy the balancing condition.

For $x \in \Gamma$, let $u(x) = \sum m_e v_e \in \TP^{n-1}$ where the sum is as in the balancing condition for parameterized tropical curves stated above.  Choose a representative of $u(x)$ in $\ZZ^n$ such that all the coordinates $u(x)_i$ are non-negative and the minimum coordinate is $0$.  A point $x$ satisfies the balancing condition if and only if $u(x) = 0$. 

For the next results we need the following definition, inherited from classical algebraic geometry.  
We say that a set of divisors $\mathcal{D} = \{D_i\}_{i \in \mathcal{I}}$ has {\em no base points} if for every $x \in \Gamma$, there is a $D' \in \mathcal{D}$ with $D'(x) = 0$.  If $D$ is understood, than we may say that a set of functions $\F$ has no base points if $\mathcal{D}(\F)$, defined as $\{D+(f) : f \in \mathcal{\F}\}$, has none.
  
\begin{lemma}\label{lem:unbalanced}
If $\F = \{f_1, \dots, f_n\}$ has no base points, then the coordinate $u(x)_i$ is the coefficient of $x$ in the divisor $D + (f_i)$.
\end{lemma}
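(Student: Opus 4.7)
The plan is to unpack the definition of $u(x)$ directly in coordinates and then invoke the base-point-free hypothesis to identify the normalization constant. First I would fix $x \in \Gamma$ and, for each outgoing direction $e$ at $x$, denote by $s_{i,e}$ the outgoing slope of $f_i$ at $x$ along $e$. Since $\phi_\F$ is linear on a neighborhood of $x$ along $e$ with velocity vector $(s_{1,e}, \ldots, s_{n,e}) \in \RR^n$, and since this vector is by definition a positive integer multiple $m_e v_e$ of a primitive $v_e \in \ZZ^n / \ZZ\eins$, summing over $e$ gives
\[
\sum_e m_e v_e \;=\; \bigl(\textstyle\sum_e s_{1,e},\, \ldots,\, \sum_e s_{n,e}\bigr) \;=\; \bigl(\ord_x(f_1), \ldots, \ord_x(f_n)\bigr) \pmod{\eins},
\]
by the very definition of $\ord_x(f_i)$ as the sum of outgoing slopes.

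Next I would compute the normalized representative. Setting $a_i := \ord_x(f_i)$, the prescribed representative of $u(x)$ in $\ZZ^n$ has coordinates $u(x)_i = a_i - \min_j a_j$, since this is the unique translate with all entries non-negative and minimum entry $0$. So the whole statement reduces to the identity
\[
\min_{j} \ord_x(f_j) \;=\; -D(x).
\]

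The two inequalities come from the two hypotheses. Because every $f_j$ lies in $R(D)$, the divisor $D + (f_j)$ is effective at $x$, giving $\ord_x(f_j) \geq -D(x)$ for all $j$, hence $\min_j \ord_x(f_j) \geq -D(x)$. For the reverse inequality I would use the no-base-points hypothesis: there exists some $j_0$ with $(D+(f_{j_0}))(x) = 0$, i.e.\ $\ord_x(f_{j_0}) = -D(x)$, so the minimum is attained and equals $-D(x)$. Substituting back gives $u(x)_i = a_i + D(x) = (D+(f_i))(x)$, as required.

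The main obstacle is essentially bookkeeping rather than content: keeping straight the two conventions about where primitive vectors live (ambient $\ZZ^n$ versus $\ZZ^n/\ZZ\eins$) and verifying that the prescribed normalization of $u(x)$ — all coordinates non-negative with minimum zero — really picks out the translate $a_i - \min_j a_j$ rather than some other lift. Once those conventions are fixed, the argument is just the observation that the sum of outgoing slope vectors of $\phi_\F$ equals the coordinatewise order vector, together with the fact that the no-base-points condition forces the minimum order to equal $-D(x)$ exactly.
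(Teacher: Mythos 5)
Your proposal is correct and follows essentially the same route as the paper: identify the coordinates of $\sum_e m_e v_e$ with the orders $\ord_x(f_i)$, then use effectivity ($f_i \in R(D)$) for one inequality and base-point-freeness for the other to pin down the normalization. The paper phrases the last step via the pairwise differences $u(x)_i - u(x)_j = (D+(f_i))(x) - (D+(f_j))(x)$ and the shared ``minimum equals zero'' normalization, whereas you compute the explicit shift $\min_j \ord_x(f_j) = -D(x)$; these are the same argument.
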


\begin{proof}
Since each $f_i$ is in $R(D)$, we have $(D+(f_i))(x) \geq 0$.  Since
$\F$ has no base points, for every $x$,  $(D+(f_i))(x) = 0 $ for some
$f_i \in \F$.
By construction of $\phi_\F$, the $i^{th}$ coordinate of the vector
$m_e v_e$ in the sum is the outgoing slope of $f_i$ at $x$ along $e$.
Hence $u(x)_i = (f_i)(x)$, and
$$u(x)_i - u(x)_j = (f_i)(x) - (f_j)(x) = (D+(f_i))(x) - (D+(f_j))(x)
.$$
This proves the assertion.
\end{proof}

\begin{corollary}
If $\F = \{f_1, \dots, f_n\}$ has no base points, then the tuple
$(\Gamma, \phi_\F)$ satisfies the balancing condition as a
parameterized tropical curve if and only if the support of each
divisor $D+(f_i)$ contains only points at infinity for $i = 1, \dots,
n$.
\end{corollary}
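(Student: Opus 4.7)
The plan is to apply Lemma \ref{lem:unbalanced} directly and then unpack the meaning of the balancing equation $u(x)=0$ under the normalization of $u(x)$ fixed in the paragraph preceding that lemma. By Lemma \ref{lem:unbalanced}, whenever $\F$ has no base points, the $i$th coordinate of the (normalized) representative $u(x)\in\ZZ^n$ equals the coefficient $(D+(f_i))(x)$ of $x$ in the divisor $D+(f_i)$. So the entire question reduces to reading off when those coefficients all vanish at finite points.

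Next, I would observe that the balancing condition for a parameterized tropical curve at a finite point $x \in \Gamma$ is the requirement $u(x) = 0$ in $\TP^{n-1} = \RR^n/\eins$, i.e. that all coordinates of the chosen representative of $u(x)$ coincide. Since the normalization forces $u(x)_i \ge 0$ for all $i$ with $\min_i u(x)_i = 0$, the equality of coordinates is equivalent to the simultaneous vanishing $u(x)_1 = \cdots = u(x)_n = 0$. Combining this with the identification from the lemma yields that balancing at $x$ is equivalent to $(D+(f_i))(x) = 0$ for every $i \in \{1,\dots,n\}$.

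Finally I would assemble the equivalence. The tuple $(\Gamma,\phi_\F)$ is a parameterized tropical curve precisely when it is balanced at \emph{every} finite $x$, and by the previous step this is equivalent to saying that no finite point of $\Gamma$ lies in $\supp(D+(f_i))$ for any $i$; equivalently, $\supp(D+(f_i))$ consists entirely of points at infinity for each $i$. No step presents a real obstacle: since the heavy lifting (the algebraic interpretation of $u(x)_i$) is already done in Lemma \ref{lem:unbalanced}, the corollary is simply a matter of translating between the $\TP^{n-1}$-valued condition $u(x) = 0$ and the coordinatewise statement about divisor coefficients.
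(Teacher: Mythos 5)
Your proposal is correct and follows exactly the route the paper intends: the corollary is stated without proof as an immediate consequence of Lemma \ref{lem:unbalanced}, and your unpacking of the normalization of $u(x)$ (nonnegative coordinates with minimum zero, so $u(x)=0$ in $\TP^{n-1}$ iff all coordinates vanish iff $x\notin\supp(D+(f_i))$ for every $i$) is precisely the intended reading. Nothing is missing.
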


Now we describe a natural way to extend $(\Gamma, \phi_\F)$ to obtain a parameterized tropical curve $(\tilde\Gamma, \tilde{\phi_\F})$.  Let $\tilde\Gamma$ be a tropical curve obtained from $\Gamma$ by attaching a leaf edge with infinite length at every finite point $x \in \Gamma$ at which the balancing condition is not satisfied.  Let $\tilde\phi_{\F} : \tilde\Gamma \rightarrow \TP^{n-1}$ be such that $\tilde\phi_\F |_\Gamma = \phi_\F$.  On the new unbounded leaf attached at $x$, identified with $[0, \infty]$, let $\tilde\phi_\F : t \mapsto - t \cdot u(x) + \phi_\F(x)$ where $u(x)$ is as in the lemma above.  By construction, $(\tilde\Gamma, \tilde\phi_\F )$ is a parameterized tropical curve.
The ``points at infinity'' are considered balanced and we do not attach new leaf edges to them.  Note that this {\em balancing procedure} depends not only on the image $\phi_\F(\Gamma)$ but on $\Gamma$ and $\F$ themselves.  In particular, this procedure may not yield the ``minimal'' embedded tropical curve containing $\phi_\F(\Gamma)$.  See Example \ref{ex:nonample}. However, it respects the structure of $\Gamma$ and $D$.

\begin{theorem}
\label{thm:deg}
The degree of the embedded tropical curve $\tilde\phi_\F(\tilde\Gamma)$ equals $\deg(D)$, for any base-point-free $\F \subset R(D)$.
\end{theorem}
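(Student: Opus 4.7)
The plan is to extend each $f_i$ to a tropical rational function $\tilde f_i$ on $\tilde\Gamma$ so that the divisor $D + (\tilde f_i)$ is supported entirely at points at infinity of $\tilde\Gamma$, and then to compute the ray sum in the definition of degree directly from this divisor. On the new leaf attached at a balancing-failure point $y$, parametrized by $t \in [0,\infty]$ with $t=0$ at $y$, I define $\tilde f_i(t) := f_i(y) - t\,u(y)_i$, which is consistent with the formula $\tilde\phi_\F(t) = -t\,u(y) + \phi_\F(y)$ coordinate by coordinate, and makes $\tilde f_i$ a bona fide tropical rational function on $\tilde\Gamma$.

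For the support claim, at an attachment point $y$ the new leaf contributes outgoing slope $-u(y)_i$, so by Lemma~\ref{lem:unbalanced}, $(D + (\tilde f_i))(y) = (D+(f_i))(y) - u(y)_i = 0$; at the new endpoint $\infty_y$ at the far end of the leaf the only outgoing slope is $+u(y)_i$, yielding $(D + (\tilde f_i))(\infty_y) = u(y)_i$; at every other point the divisor is unchanged. Every finite point $z \in \Gamma$ lying in the support of some $D+(f_j)$ has $u(z)_j > 0$, hence $u(z) \neq 0$, so $z$ is itself an attachment point and is cleared by the construction. Therefore $\supp(D + (\tilde f_i))$ is contained in the set of points at infinity of $\tilde\Gamma$.

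For the ray sum, at each point $x$ at infinity of $\tilde\Gamma$ I identify the slope vector $m_e v_e$ of the edge leading to $x$, normalized to have all coordinates $\le 0$ and maximum $0$ as required by the degree definition. On a new leaf at $y$ the slope vector is $-u(y)$, already in the correct normalization since $u(y)$ has nonnegative entries with minimum $0$. On an original unbounded edge of $\Gamma$ ending at $x$, the slope vector $s$ has $s_i = -\ord_x(f_i)$, and the no-base-points hypothesis forces $\max_i s_i = D(x)$, so subtracting $D(x)\,\eins$ gives $-((D+(f_i))(x))_i$. In both cases the normalized vector equals $-u(x)$ under the natural extension $u(x)_i := (D+(\tilde f_i))(x)$. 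Summing over all unbounded edges of $\tilde\Gamma$, which agrees with summing over rays of the image since collapsed edges (where $u(x)=0$) contribute zero and parallel edges simply add multiplicities, gives
\[
\sum_{\text{rays}} m_e v_e \;=\; -\sum_{x} u(x),
\]
where $x$ runs over the points at infinity of $\tilde\Gamma$. The $i$-th coordinate of the right side is $-\sum_x (D+(\tilde f_i))(x) = -\deg(D+(\tilde f_i)) = -\deg(D)$, the first equality by the support reduction above and the second because principal divisors on a tropical curve have degree zero. Since this identity holds in every coordinate, the sum is $-\deg(D)\,\eins$, so by definition the embedded tropical curve has degree $\deg(D)$. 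The main obstacle is this last bookkeeping: reconciling the normalization in the parameterization with the one in the degree definition, and verifying that unbounded edges of $\tilde\Gamma$ whose images collapse to a point really do drop out of both sides of the identity.
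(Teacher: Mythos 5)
Your proof is correct, and its core is the same as the paper's: both arguments hinge on Lemma~\ref{lem:unbalanced}, which identifies the normalized imbalance vector $u(x)$ with the coefficient vector $\bigl((D+(f_i))(x)\bigr)_i$, and both conclude by summing $-u(x)$ over all contributing points to get $-\deg(D)\cdot\eins$. Where you genuinely diverge is in the treatment of the unbounded rays already present in $\Gamma$: the paper truncates the unbounded leaf edges to reduce to the case of bounded image and then invokes the bounded computation, whereas you keep $\Gamma$ intact and directly compute the normalized direction of each original ray, using base-point-freeness to show that $\max_i s_i = D(x)$ at a point at infinity and hence that the normalized vector is again $-u(x)$ under the natural extension of $u$ to points at infinity. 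Your device of extending each $f_i$ to $\tilde f_i$ on $\tilde\Gamma$ also makes explicit something the paper leaves implicit: the final identity becomes the statement that $D+(\tilde f_i)$ is an effective divisor supported at infinity of total degree $\deg(D)$, with the last step justified by the degree of a principal divisor being zero. The paper's route is shorter; yours is more uniform (one computation covers both bounded and unbounded images) and records explicitly why collapsed edges and multiplicities cause no double counting, a point the paper passes over in silence.
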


\begin{proof}
First consider the case when the image $\phi_\F(\Gamma)$ is bounded in $\TP^{n-1}$.  Then all the unbounded rays of $\tilde\phi_\F(\tilde\Gamma)$ are images of the new leaf edges in $\tilde\Gamma$.  The new leaf edges are attached to the points $x\in \Gamma$ where $u(x) \neq 0$.   Such a point $x$ contributes $- u(x)$ to the computation of the degree of the embedded tropical curve $\tilde\phi_\F(\tilde\Gamma)$.  By Lemma \ref{lem:unbalanced}, the sum of such $- u(x)$ over all $x \in \Gamma$ is equal to $- \deg(D) \cdot \eins$.  So $\tilde\phi_\F(\tilde\Gamma)$ has degree $\deg(D)$ by definition.

Now suppose the image $\phi_\F(\Gamma)$ is not bounded.  Without loss of generality, we may assume that the support of $D$ contains no points at infinity.  Consider a new abstract tropical curve $\Gamma_0$ obtained from $\Gamma$ by truncating the unbounded leaf edges so that every $f_i$ is linear on $\Gamma \backslash \Gamma_0$ and $\supp(D)$ does not intersect  $\Gamma \backslash \Gamma_0$.  Then $\F_0 = \{ f_1|_{\Gamma_0} , \dots,  f_n|_{\Gamma_0}\}$ is a basepoint free subset of $R_{\Gamma_0}(D)$.  Moreover, 
$\tilde\phi_\F(\tilde\Gamma) = \tilde\phi_{\F_0}(\tilde\Gamma_0)$ and we are back to the bounded case.
\end{proof}

\begin{example}[Tropical lines]
Suppose $\Gamma$ is a tree and $deg(D) = 1$.   Let $v_1, v_2, \dots, v_n$ be the leaf vertices.  Let $f_i \in R(D)$ be the function such that $D + (f_i) = v_i$.  Then $\F = \{f_1, \dots, f_n\}$ gives an embedding $\phi_\F : \Gamma \hookrightarrow \TP^{n-1}$, and the balancing procedure produces a tropical line.  The image $\phi_\F (v_i)$ of the leaf vertex $v_i$ lies on the unbounded ray in $\tilde\phi_\F(\tilde\Gamma)$ pointing in direction $-e_i$. 
\end{example}

\begin{example}[Embedding a complete graph as the graph of a simplex]
Let $\Gamma$ be the the tropical curve obtained from the complete graph on $m$ vertices $v_1, v_2, \dots , v_m$ with all edge lengths 1, and consider the canonical divisor $K$ on $\Gamma$.  Let $f_i \in R(K)$ be the function such that $K + (f_i) = m \cdot v_i$.  For example, let $f_i = -1$ at $v_i$ and $f_i = 0$ on the edges not adjacent to $v_i$, and linearly interpolate. 
Although $\F = \{f_1, f_2, \dots , f_m\}$ does not generate $R(K)$, the map $\phi_\F : \Gamma \rightarrow \TP^{m-1}$ is an embedding.   We have $\phi_\F(v_i) = -e_i$, and all $f_i$ are linear on the interior of all edges, so the image $\phi_\F(\Gamma)$ consists of the edges of the simplex with vertices $\{-e_1, -e_2, \cdots, -e_m\}$.  The balancing procedure attaches an unbounded ray in direction $- e_i$ with multiplicity $m$ at the point $\phi_\F(v_i)$ for each $i \in \{1,2, \dots ,m\}$.  
\end{example} 

\begin{example}[$\Gamma = K_4$, $D=K$, $\phi_\F$ not injective]
\label{ex:nonample}

Let $\Gamma$ be as in the previous example.  Let $u_i \in \Gamma$ be the midpoints between $v_i$ and $v_4$, for $i = 1,2,3$.  See Figure \ref{fig:nonample}.  Let $f_1, f_2, f_3 \in R(K)$ be the functions such that $K + (f_1) = 2 u_1 + v_2 + v_3$, $K + (f_2) = 2 u_2 + v_1 + v_3$, and $K + (f_3) = 2 u_3 + v_1 + v_2$.  Then $\F = \{f_1, f_2, f_3\}$ is basepoint free, but $\phi_\F$ is not injective.  All of $v_1, v_2, v_3, v_4$ and the edges between $v_1, v_2, v_3$ are mapped to the same point under $\phi_\F$.  Their image is the point in the middle in the figure.  The image $\phi_\F(\Gamma)$ consists of the three solid line segments in the figure.  For a point in the interior of a solid line segment, the fiber has cardinality 2.  So each of three line segments has multiplicity 2.  

The tuple $(\Gamma, \phi_\F)$ is not yet a parameterized tropical curve because it does not satisfy the balancing condition at the points $u_1, u_2, u_3, v_1, v_2, v_3 \in \Gamma$, which are in the supports of $K+(f_i)$.  The point $v_4$ satisfies the balancing condition.  The balancing procedure produces a parameterized tropical curve with six unbounded rays as shown in Figure \ref{fig:nonample}.  Each of these has multiplicity 2.  From this, we see that the tropical curve has degree 4.

The embedded tropical curve $\tilde\phi_\F(\tilde\Gamma)$ we obtained this way from the balancing procedure is not a ``minimal'' embedded tropical curve containing $\phi_\F(\Gamma)$.  If we attached the unbounded rays only to the images of $u_1, u_2, u_3$, we would obtain a smaller embedded tropical curve of degree two containing $\phi_\F(\Gamma)$.

\begin{figure}
	\centering
	\scalebox{0.6}{\input{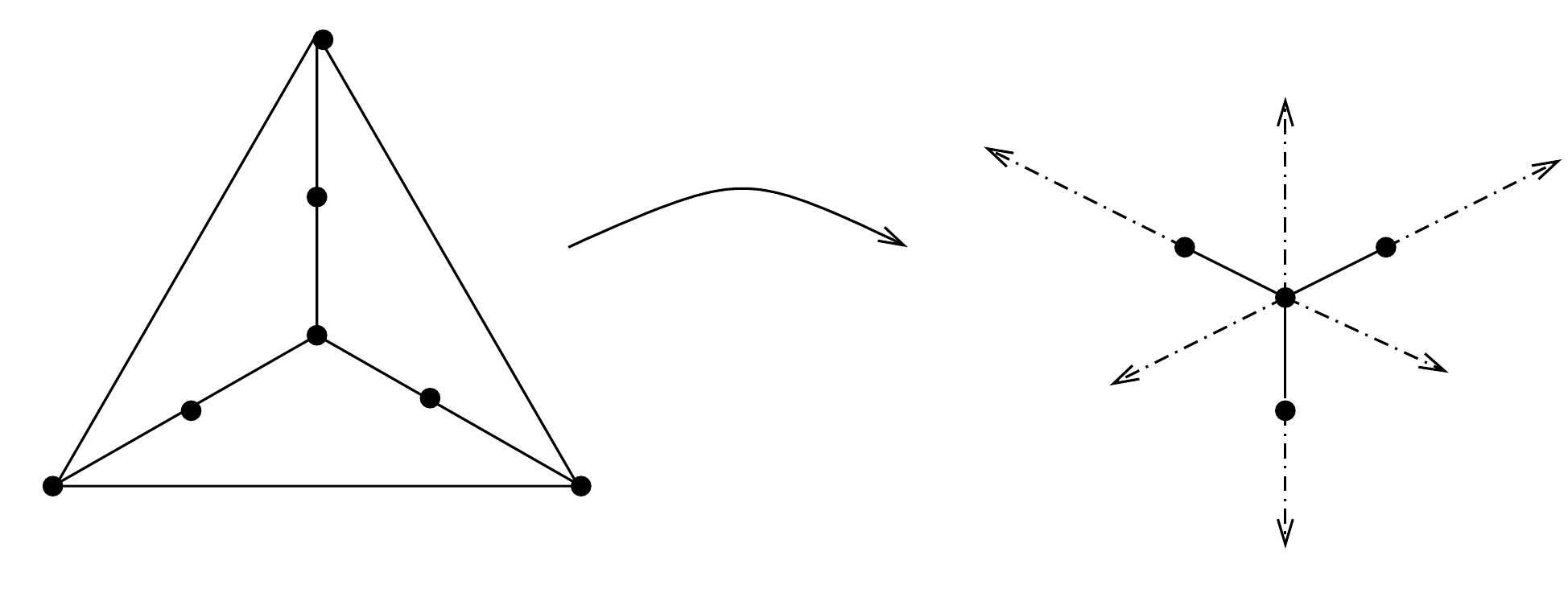_t}}
	\caption{From Example \ref{ex:nonample}: $K_4$ with a non-injective map $\phi_\F$ to $\TP^2$.}
	\label{fig:nonample}
\end{figure}

\end{example}

\section{Canonical embeddings}
\label{sec:Canonical}

In this section we repeat the first steps in the classical theory of
ample divisors and canonical embeddings in the tropical
setting (compare~\cite[IV.3]{Hartshorne}). It turns out that for a few
classical equivalences, only one implication survives tropical
conditions.

Let $\Gamma$ be a tropical curve and $g$ its genus.
The {\em rank} $r(D)$ of a divisor $D$ is the maximum integer $r$ such that $|D - E| \neq \emptyset$ for all degree-$r$ divisors $E$.  The Riemann-Roch
Theorem~\cite{GathmannKerber,MikhalkinZharkov} (based on work of \cite{BakerNorine})
, which is the same for classical and tropical geometry, says the following: 
\begin{equation*}
  r(D)-r(K-D)=\deg D+1-g. \tag{RR}
\end{equation*}

We say that $D \ge 0$ is {\em very ample} if $R(D)$ separates points, that
is, if for every $x \neq x' \in \Gamma$ there are $f,f' \in R(D)$ with
$f(x)-f(x') \neq f'(x)-f'(x')$.
We call $D$ {\em ample} if some positive multiple $kD$ is
very ample. 

\begin{lemma}
  A divisor $D$ is very ample if and only if $\phi_\F$ is injective
  for any set $\F$ that generates $R(D)$. 
\end{lemma}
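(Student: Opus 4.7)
The plan is to unpack each side in terms of differences and use the fact that tropical linear combinations preserve the property of having constant difference at two fixed points.

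First, I would rewrite the injectivity condition concretely. Two points $(f_1(x),\dots,f_r(x))$ and $(f_1(x'),\dots,f_r(x'))$ represent the same element of $\TP^{r-1}$ exactly when they differ by a scalar multiple of $\eins$, i.e.\ when there is a constant $c\in\RR$ with $f_i(x)-f_i(x')=c$ for every $i$. So $\phi_\F$ fails to be injective at $x\neq x'$ if and only if all the differences $f_i(x)-f_i(x')$ coincide.

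For the direction ``$\phi_\F$ injective $\Rightarrow$ $D$ very ample,'' I would fix $x\neq x'$ and use the observation above: since $\phi_\F(x)\neq\phi_\F(x')$, two indices $i,j$ witness $f_i(x)-f_i(x')\neq f_j(x)-f_j(x')$, which is exactly the separation required in the definition of very ampleness.

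For the reverse direction I would prove the contrapositive: assume there exist $x\neq x'$ and a generating set $\F=\{f_1,\dots,f_r\}$ with $\phi_\F(x)=\phi_\F(x')$, and show that $R(D)$ does not separate $x$ from $x'$. By the unpacking step there is a single constant $c$ with $f_i(x)-f_i(x')=c$ for all $i$. Now for any $g\in R(D)$, since $\F$ generates $R(D)$, I can write $g=\bigoplus_{i=1}^r (a_i\odot f_i)$, and then
\[
g(x)=\max_i\bigl(a_i+f_i(x)\bigr)=\max_i\bigl(a_i+f_i(x')+c\bigr)=c+g(x').
\]
Thus every $g\in R(D)$ satisfies $g(x)-g(x')=c$, so no two elements of $R(D)$ can have different differences at $x,x'$, contradicting very ampleness.

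There is no real obstacle here; the main content is the stability computation in the display above, which shows that the ``tropical translation defect'' $c$ is preserved under tropical linear combinations. The argument also makes clear that injectivity of $\phi_\F$ does not actually depend on which generating set $\F$ one picks, which is why the statement can be phrased with ``any.''
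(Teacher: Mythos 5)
Your proof is correct and takes essentially the same route as the paper's: the ``injective $\Rightarrow$ very ample'' direction is immediate from the definition, and the converse is the observation that a common difference $c=f_i(x)-f_i(x')$ propagates to all tropical linear combinations and hence to all of $R(D)$. You simply write out explicitly the max computation that the paper summarizes as ``then the same is true for any pair of tropical linear combinations of the $f_i$.''
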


\begin{proof}
  The ``if'' direction is clear.  To see the ``only if'' direction,
  suppose there exist $x,x' \in \Gamma$ such that $f_i(x)-f_i(x') = f_j(x)-f_j(x')$ for all pairs $f_i, f_j
  \in \F$; then the same is true for any pair of tropical linear
  combinations of the $f_i$, and thus for all pairs $f,f' \in R(D)$.
\end{proof}

\begin{lemma}
  A divisor $D$ is very ample if and only if for all $x \neq x' \in
  \Gamma$ there is a $D' \in |D|$ whose support contains a smooth cut
  set separating $x$ and $x'$.
\end{lemma}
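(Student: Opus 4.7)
The plan is to prove the two implications separately, using chip firing for $(\Leftarrow)$ and the tropical sum operation for $(\Rightarrow)$.

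For the $(\Leftarrow)$ direction, I would take distinct points $x, x' \in \Gamma$ and the hypothesized $D' = D + (f) \in |D|$ whose support contains a smooth cut set $A$ separating $x$ from $x'$. Let $\Gamma_1, \Gamma_2$ be the two components of $\Gamma \setminus A$ with $x \in \Gamma_1$ and $x' \in \Gamma_2$. Each boundary point of $\overline{\Gamma_1}$ lies in $A$, so it is a valent-$2$ point of $\Gamma$ with exactly one outgoing edge from $\overline{\Gamma_1}$, and it carries at least one chip of $D'$; hence $\overline{\Gamma_1}$ can fire on $D'$. Choosing $l > 0$ smaller than $\dist(x', \overline{\Gamma_1}) > 0$ (and small enough not to move chips past each other or past a non-smooth point), I set $g := CF(\overline{\Gamma_1}, l)$, so that $f + g \in R(D)$, $g(x) = 0$, and $g(x') = -l$. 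Then the pair $f, f + g \in R(D)$ satisfies $(f+g)(x) - (f+g)(x') = f(x) - f(x') + l \neq f(x) - f(x')$, which is exactly the separation property defining very ampleness.

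For the $(\Rightarrow)$ direction, given distinct $x, x' \in \Gamma$, very ampleness supplies $f, f' \in R(D)$ with $f(x) - f(x') \neq f'(x) - f'(x')$; after swapping the roles of $f$ and $f'$ if needed I assume $f(x) - f'(x) > f(x') - f'(x')$. The key family is the tropical sum $h_c := f \oplus (f' + c) \in R(D)$ indexed by $c$ in the open interval $(f(x') - f'(x'), f(x) - f'(x))$, together with the level set $A_c := (f - f')^{-1}(c)$. Any such $c$ forces $x \in \{f - f' > c\}$ and $x' \in \{f - f' < c\}$, so $A_c$ already separates $x$ from $x'$ in $\Gamma$. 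I then restrict to a generic $c$ --- avoiding the finite set of values $(f - f')(v)$ for $v$ a vertex of $\Gamma$ or a corner of $f$ or $f'$, together with the finitely many constant values attained by $f - f'$ on its zero-slope linear pieces. For such $c$ the set $A_c$ is a finite collection of smooth points of $\Gamma$ at each of which both $f$ and $f'$ are linear with distinct integer slopes. A local slope computation of the tropical max at a transverse crossing $a \in A_c$ yields $\ord_a(h_c) = |\alpha_f(a) - \alpha_{f'}(a)| \geq 1$, so $A_c \subseteq \supp(D + (h_c))$, and $D + (h_c) \in |D|$ is the desired divisor.

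The step I expect to be the main obstacle is the genericity bookkeeping in the forward direction: one must carefully list every way that $A_c$ can fail to be a nice transverse level set --- $A_c$ meeting a vertex of $\Gamma$, $A_c$ meeting a non-smooth point of $f$ or $f'$, and linear pieces on which $f - f'$ is constantly equal to $c$ --- and confirm that the exceptional set of $c$'s is finite, so that the open interval $(f(x') - f'(x'), f(x) - f'(x))$ contains a good value. Once that is in hand, the claim that \emph{every} point of $A_c$ (and not just a dense subset) contributes to $\supp(D + (h_c))$ reduces to the slope identity above. The $(\Leftarrow)$ direction is, by contrast, a quick application of the chip firing vocabulary from Section~\ref{sec:ChipFiring}.
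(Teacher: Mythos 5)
Your proposal is correct and follows essentially the same route as the paper: for the forward direction the paper also forms $f \oplus (\epsilon \odot f')$ for a generic level $\epsilon$ strictly between $f(x)-f'(x)$ and $f(x')-f'(x')$ and observes that the level set of $f-f'$ is a smooth cut set inside the support, and for the converse it likewise adds a cut (chip-firing) function supported on one side of the given cut set to perturb the difference $f(x)-f(x')$. Your version merely spells out the genericity bookkeeping and the order computation $\ord_a(h_c)=|\alpha_f(a)-\alpha_{f'}(a)|$ that the paper leaves implicit.
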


\begin{proof}
  Suppose $D$ is very ample with witnesses $f,f' \in R(D)$ for $x,
  x'$. Up to relabeling $c := f(x)-f'(x) - (f(x')-f'(x')) > 0$. 
  Then for a generic $\epsilon \in (0,c)$, the support of the divisor
  $D' := D + (f \oplus \epsilon \odot f')$ contains the smooth cut
  set $\{ x : f(x)-f'(x) = \epsilon \}$.

  Conversely, if the support of $D'=D+ (f) \in |D|$ separates $x$ from
  $x'$, we can use a cut function $g$ with $g(x)=f(x)<g(x')$ to
  construct $f' := f \odot g$.
\end{proof}

\begin{lemma} \label{nbp}
  If $r(D) \ge 1$ then $|D|$ has no base points.
\end{lemma}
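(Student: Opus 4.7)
The plan is to argue by contradiction: suppose $r(D)\geq 1$ yet some $x\in\Gamma$ is a base point of $|D|$, meaning every $D'\in|D|$ satisfies $D'(x)\geq 1$.

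First I would translate the base-point hypothesis into an identity of tropical semi-modules: $R(D)=R(D-x)$. The forward inclusion $R(D)\subseteq R(D-x)$ is exactly the base-point condition, rewritten as $(D-x)+(f)\geq 0$; the reverse is automatic because adding the chip $x$ back to an effective divisor keeps it effective. Consequently $|D|$ and $|D-x|$ are naturally identified via the shift $D'\mapsto D'\pm x$, and they share the same cell complex structure.

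Next I would invoke $r(D)\geq 1$: for every $y\neq x$ there exists $f\in R(D)$ with $(D+(f))(y)\geq 1$, and combined with the base-point condition at $x$ this forces $D+(f)\geq x+y$, whence $|D-x-y|\neq\emptyset$. The next step is to iterate the translation identity: letting $m\geq 1$ be the largest integer with $R(D)=R(D-mx)$ (equivalently $m=\min_{D'\in|D|}D'(x)$), I would aim to derive a contradiction by combining the rank hypothesis at the $m$th level with the degree drop $\deg(D-mx)=\deg D-m$, eventually forcing $|D-Nx|\neq\emptyset$ for $N$ larger than $\deg D$, which is impossible.

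The main obstacle will be rigorously justifying the iteration: the naive claim ``$R(D)=R(D-x)$ implies $R(D-x)=R(D-2x)$'' is false in general, since $x$ being a simple base point does not make it a base point of the shifted system. The argument must instead track the multiplicity $m$ carefully and use that $r(D-mx)\geq 1$, which requires transferring the rank condition through the translation---something that holds easily for degree-$r$ effective test divisors avoiding $x$ but needs additional care when $x$ lies in their support. Resolving this delicate point, perhaps by leveraging the finite generation of $R(D)$ from Theorem~\ref{thm:fingen} together with the tropical Riemann--Roch theorem to bound $r(D-mx)$, is the technical crux.
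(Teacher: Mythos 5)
Your proposal does not close, and you say so yourself: everything funnels into the iteration step, and you correctly note that ``$R(D)=R(D-x)$ implies $R(D-x)=R(D-2x)$'' is false, but you offer no substitute beyond a gesture at Riemann--Roch and finite generation. In fact no substitute along these lines exists. If $m=\min_{D'\in|D|}D'(x)$, then some $D'\in|D|$ attains $D'(x)=m$ exactly, so $x$ is \emph{not} a base point of $|D-mx|$ and the iteration legitimately halts at $m\le\deg D$; you never reach ``$|D-Nx|\neq\emptyset$ for $N>\deg D$,'' so no degree contradiction materializes. Even your intermediate claim that the rank hypothesis transfers is unproved: to get $r(D-x)\ge 1$ you would need $|D-2x|\neq\emptyset$, and the combination of $r(D)\ge 1$ with ``$x$ is a base point'' only yields $D'\ge x+y$ for test points $y\neq x$, not $D'\ge 2x$. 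So the strategy is not merely incomplete at a technical point --- it is aimed at a contradiction that the hypotheses do not produce.

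The missing idea is local and geometric rather than formal, and it is how the paper argues. Given $x$, choose $x'$ in the interior of an edge incident to $x$. Since $r(D)\ge 1$, the system $|D-x'|$ is nonempty, so there is $D'\in|D|$ with $D'(x')>0$. If $D'(x)=0$ you are done. Otherwise there are chips at both boundary points of the open segment $(x,x')$, so the subgraph $\Gamma\setminus(x,x')$ can fire; this chip-firing move slides one chip off $x$ into the interior of the segment, the chip at $x'$ being exactly what makes the firing legal at the other end. Repeating with the new chip positions (the chip nearest $x$ inside the segment now plays the role of $x'$) strips all $D'(x)$ chips off $x$ and yields $D''\in|D|$ with $D''(x)=0$, so $x$ is not a base point. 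The hypothesis $r(D)\ge 1$ is used only once, to plant a chip at the auxiliary point $x'$; no induction on multiplicities and no degree count is needed.
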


\begin{proof}
  Let $x \in \Gamma$. Choose $x'$ on an edge incident to $x$. By
  assumption, there is a $D' \in |D|$ with $D'(x') > 0$. If $D'(x) >
  0$ then we can use $x'$ to pull $D'$ away from $x$, to get $D'' \in
  |D|$ with $D''(x) = 0$. Thus, $x$ is not a base point.
\end{proof}

The converse is false. Consider, for example, a curve $\Gamma$ of
positive genus with a {\em bridge} edge $e$ (i.e. an edge whose
removal disconnects $\Gamma$).    Then for $x \in e$, $|x|$ has no
base points, yet $r(x)=0$.

\begin{lemma}
  If $D$ is very ample, then $r(D) \ge 1$. In particular, very ample
  divisors have no base points.
\end{lemma}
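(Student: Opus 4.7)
The strategy is to show that for every $x \in \Gamma$ we have $|D-x| \neq \emptyset$, from which $r(D) \ge 1$ follows; the statement about base points is then an immediate application of Lemma \ref{nbp}. So I fix an arbitrary point $x \in \Gamma$ and will produce a divisor $D^* \in |D|$ with $D^*(x) \ge 1$.

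The plan is to approach $x$ by a sequence of auxiliary points and harvest divisors from very ampleness. Choose an edge $e$ of $\Gamma$ having $x$ in its closure, and pick a sequence $y_n \in e$ with $y_n \neq x$ and $y_n \to x$. For each $n$ the previous lemma furnishes a divisor $D_n \in |D|$ whose support contains a smooth cut set $C_n$ separating $x$ from $y_n$. Because $x$ and $y_n$ lie in different components of $\Gamma \setminus C_n$, and because the subsegment of $e$ from $x$ to $y_n$ is a continuous path between those components, this subsegment must meet $C_n$. Thus there is a point $c_n \in C_n \cap e$ lying strictly between $x$ and $y_n$, and in particular $c_n \in \supp(D_n)$ with $c_n \to x$.

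Next I pass to a limit. The tropical curve $\Gamma$ is compact (points at infinity are included), so $\Sym^d \Gamma$ is compact where $d = \deg(D)$. By the cell-complex description of Section \ref{sec:Cell}, $|D|_\Gamma$ is a finite union of closed cells in $\Sym^d \Gamma$, hence closed. Passing to a subsequence, $D_n \to D^*$ for some $D^* \in |D|$. Since $c_n \in \supp(D_n)$ and $c_n \to x$, convergence in the symmetric product forces $x \in \supp(D^*)$, i.e.\ $D^*(x) \ge 1$. Consequently $D^* - x \in |D-x|$, which is therefore nonempty. Since $x$ was arbitrary, $r(D) \ge 1$, and Lemma \ref{nbp} then delivers that $|D|$ has no base points.

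I expect the main point requiring care to be the limit step: specifically, verifying that $|D|$ is closed in $\Sym^d \Gamma$ (which I will read off from the cell decomposition in Section \ref{sec:Cell}) and that a convergent subsequence of divisors whose supports contain $c_n \to x$ produces a limit divisor with $x$ in its support. The geometric/graph-theoretic observation that the cut set must intersect the edge $e$ between $x$ and $y_n$ is elementary but essential—without it one cannot control where the chips of $D_n$ sit, and the chips might fail to accumulate at $x$ in the limit.
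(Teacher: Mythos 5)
Your proof is correct and follows essentially the same route as the paper's: a sequence of auxiliary points converging to $x$, divisors with separating smooth cut sets supplied by the preceding lemma, and a compactness argument in $\Sym^{\deg D}\Gamma$ to extract a limit divisor supported at $x$. Your explicit verification that each cut set must meet the segment of $e$ between $x$ and $y_n$, and that $|D|$ is closed as a finite union of closed cells, merely fills in details the paper leaves implicit.
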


\begin{proof}
  Let $x \in \Gamma$. Choose a sequence $x_n \in \Gamma \setminus
  \{x\}$ converging to $x$. There are divisors $D_n \in |D|$ so that
  $D_n$ contains a smooth cut set separating $x_n$ and $x$. Because
  $|D| \subset \Sym^{\deg D}\Gamma$ is compact, there is a converging
  subsequence. Its limit $D' \in |D|$ has $D'(x) > 0$.
\end{proof}

\begin{theorem} \label{thm:ample}
  If $\deg D \ge 2g+1$, then $D$ is very ample.
\end{theorem}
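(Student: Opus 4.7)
The plan is to invoke the lemma above: $D$ is very ample if and only if for every pair of distinct points $x, x' \in \Gamma$ there is an effective $D' \in |D|$ whose support contains a smooth cut set separating $x$ from $x'$. So I fix $x \neq x'$ and construct such a $D'$ in two stages---first a small smooth cut set $A$ separating $x$ from $x'$, then an effective divisor in $|D|$ with $A$ in its support.

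First I would construct a smooth cut set of size at most $g+1$. Pick any finite model $G$ of $\Gamma$, refined so that $x$ and $x'$ are vertices, and let $T$ be a spanning tree of $G$; then the non-tree edges $e_1, \dots, e_g$ are exactly $g$ in number. Let $\gamma$ be the unique $x$-$x'$ path in $T$, and choose a smooth point $y$ in the interior of some edge of $\gamma$ strictly between $x$ and $x'$. Removing $y$ splits $T$ into components $T_x \ni x$ and $T_{x'} \ni x'$. For each non-tree edge $e_i$ whose two endpoints lie in different $T$-components, pick a smooth interior point $y_i \in e_i$, and set $A = \{y\} \cup \{y_i\}$. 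A straightforward case check shows that $\Gamma \setminus A$ has $x$ and $x'$ in different connected components: a non-tree edge with both endpoints on the same side of the partition contributes no new $x$-$x'$ path, while a crossing non-tree edge is broken at $y_i$. Clearly $|A| \leq g + 1$.

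Next I would invoke Riemann-Roch to fit an effective divisor whose support contains $A$. Writing $A$ also for the effective divisor $\sum_{p \in A} p$, and using $r(K - D + A) \geq -1$, the Riemann-Roch formula gives
\[
r(D - A) \;\geq\; \deg(D - A) - g \;=\; \deg D - |A| - g \;\geq\; (2g+1) - (g+1) - g \;=\; 0,
\]
so $|D - A| \neq \emptyset$. Choosing an effective $D_0 \in |D - A|$ and setting $D' := D_0 + A \in |D|$ gives a divisor whose support contains $A$, which is the required smooth cut set separating $x$ from $x'$.

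The hard part is the combinatorial construction of $A$: one must pick $y$ strictly between $x$ and $x'$, verify that $A$ genuinely separates $x$ from $x'$ in $\Gamma$ (not merely in $T$), and ensure every point of $A$ is smooth. Once the bound $|A| \leq g+1$ is in hand, the rest is purely formal via Riemann-Roch.
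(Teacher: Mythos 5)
Your proof is correct and follows essentially the same route as the paper: reduce to exhibiting a smooth cut set $A$ separating $x$ from $x'$ with $|A|\le g+1$, then apply Riemann--Roch to conclude $|D-A|\neq\emptyset$ and add $A$ back. The only difference is cosmetic --- you bound $|A|$ by an explicit spanning-tree construction, whereas the paper takes a minimal smooth cut set and bounds its size via the genus formula $g=g(\overline{U})+g(\overline{U'})+r-1$, and you apply RR directly to $D-A$ rather than first computing $r(D)\ge g+1$.
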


\begin{corollary}
  Every divisor of positive degree is ample.
\end{corollary}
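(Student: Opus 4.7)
The plan is to deduce the corollary directly from Theorem \ref{thm:ample} by passing to a sufficiently large multiple of $D$. Recall that $D$ is defined to be ample if some positive multiple $kD$ is very ample, so it suffices to exhibit one such $k$.

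Concretely, given a divisor $D$ with $\deg D \geq 1$, I would choose any positive integer $k$ with $k \geq 2g+1$ (or more economically, $k \geq \lceil (2g+1)/\deg D \rceil$). Since the degree is additive under addition of divisors, we have
\[
\deg(kD) = k \cdot \deg D \geq 2g+1.
\]
By Theorem \ref{thm:ample}, any divisor of degree at least $2g+1$ is very ample, so $kD$ is very ample. By the definition of ampleness, $D$ is therefore ample.

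There is essentially no obstacle here once Theorem \ref{thm:ample} is in hand: the whole content of the corollary is that the bound ``$\deg \geq 2g+1$'' can always be reached by scaling, provided the starting degree is strictly positive. The only thing worth checking carefully is that $k$ can indeed be taken to be a positive integer (so that $kD$ is again an effective multiple of $D$ in the sense required for ampleness), which is immediate from $\deg D \geq 1$ and the fact that one may choose $k$ as large as desired.
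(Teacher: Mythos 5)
Your proof is correct and is exactly the argument the paper intends: the corollary is stated as an immediate consequence of Theorem~\ref{thm:ample}, obtained by scaling $D$ by any integer $k$ with $k\deg D \ge 2g+1$ and invoking the definition of ampleness. Nothing further is needed.
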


\begin{proof}[Proof of Theorem~\ref{thm:ample}]
  As $\deg K = 2g-2$, $\deg(K-D) \le -1$, and $r(K-D)=-1$. By RR we
  have $r(D) \ge g+1$.

  Now, let $x \neq x' \in \Gamma$, and let $C = \{y_1, \ldots, y_r\}$ be a
  minimal smooth cut set separating $x$ and $x'$. Then $\Gamma \setminus C$
  has two connected components, $U,U'$, and the corresponding cut
  divisors agree: $D_U=D_{U'}=y_1+\ldots+y_r$. Furthermore, the genus
  of $\Gamma$ can be computed as $g =
  g(\overline{U})+g(\overline{U'})+r-1$. 
  In particular, $r(D) \ge g+1 \ge r$, and $|D-y_1-\ldots-y_r| \neq
  \emptyset$.

  Let $f \in R(D-y_1-\ldots-y_r)$, and set $D_0 := D+(f)$. By
  construction, $D_0 \ge D_U$, and we can use cut functions to
  separate $x$ from $x'$.
\end{proof}

A tropical curve is {\em hyperelliptic} if there is a
linear system $|D|$ with $\deg D=2$ and $r(D)=1$.  In particular, it is not a tree.

\begin{example}The curve with $2$ vertices joined by $g+1$ edges is
hyperelliptic. The following genus $3$ curve is hyperelliptic.
\begin{center}
  \includegraphics{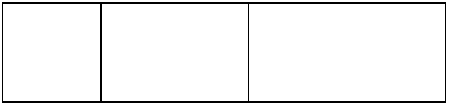}
\end{center}
\end{example} 

\begin{proposition}
  If $\deg D=2$, then $\phi_D(\Gamma)$ is a tree.  If in addition
  $r(D)=1$, then the fiber $\phi_D^{-1}(x) = \{y \in \Gamma :
  \phi_D(y) = x\}$ has size 1 or 2 for all $x$ in the image. 
\end{proposition}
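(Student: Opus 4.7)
The plan is to proceed by cases on the rank $r(D) \in \{0,1,2\}$ (assuming $|D| \neq \emptyset$, else the statement is vacuous). If $r(D)=0$ then $R(D)=\eins$ and $\phi_D$ is constant with one-point image, a tree; Part 2 is then vacuous. Before handling the remaining cases I record a useful fact: if $\Gamma$ is not a tree, then no non-constant tropical rational function has divisor $z-z'$ for distinct $z,z'\in\Gamma$. Such a function would be piecewise linear with integer slopes, a single simple zero at $z$ and a single simple pole at $z'$, and smooth elsewhere; restricting to any cycle through $z$ or $z'$ one finds that the slopes on the two arcs between $z$ and $z'$ must be $\pm 1/|\text{arc}|$-like quantities whose integrality is incompatible with the cycle condition (a direct slope/length computation). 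Hence on a non-tree $\Gamma$, $z \sim z'$ forces $z=z'$.

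If $r(D)=2$, every effective degree-two divisor is linearly equivalent to $D$, so for any distinct $y,y' \in \Gamma$ we have $y+y' \sim 2y$ and hence $y' \sim y$, which forces $\Gamma$ to be a tree (so $g=0$). Then $\deg D = 2 \geq 2g+1$, so Theorem \ref{thm:ample} makes $D$ very ample, $\phi_D$ is injective, and $\phi_D(\Gamma) \cong \Gamma$ is a tree; Part 2 does not apply. In the substantive case $r(D)=1$, Lemma \ref{nbp} gives that $|D|$ is base-point-free, so for each $y \in \Gamma$ there exists $z \in \Gamma$ with $y+z \in |D|$; by the preceding uniqueness, this $z$ is unique, and I define $\sigma(y):=z$. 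Then $\sigma$ is a continuous involution and $\psi(y):=y+\sigma(y)$ is a continuous map $\Gamma \to |D|$. Any $D'=w_1+w_2 \in |D|$ satisfies $w_2=\sigma(w_1)$ by uniqueness, so $\psi$ is surjective and induces a homeomorphism $\Gamma/\sigma \cong |D|$.

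To relate $\psi$ to $\phi_D$, I trace the homeomorphism $\tconv \phi_\F(\Gamma) \cong |D|$ of Section \ref{sec:Map}: the point $\phi_D(y)=(f_1(y),\dots,f_n(y))$ corresponds to the function $f_y(x):=\bigoplus_i (f_i(x)-f_i(y))\in R(D)/\eins$, hence to the divisor $D+(f_y) \in |D|$. The key computation is $(D+(f_y))(y)\geq 1$: because every summand of $f_y$ vanishes at $x=y$, the outgoing slope of $f_y$ at $y$ in each direction $v$ equals $\max_i s_i^v(y)$, where $s_i^v(y)$ is the outgoing slope of $f_i$ at $y$ in direction $v$. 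Since $\F$ generates $R(D)$, for any $f\in R(D)$ the outgoing slope of $f$ at $y$ in direction $v$ lies in $\{s_i^v(y)\}_i$, and so is bounded above by $\max_i s_i^v(y)$; summing over $v$ gives $(f)(y)\leq(f_y)(y)$. Taking $f^* \in R(D)$ with $D+(f^*)=y+\sigma(y)$ yields $(D+(f_y))(y) \geq (D+(f^*))(y) = (y+\sigma(y))(y) \geq 1$. Combined with $\deg(D+(f_y))=2$ and the uniqueness of $\sigma$, this forces $D+(f_y)=y+\sigma(y)=\psi(y)$.

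Consequently $\phi_D(y_1)=\phi_D(y_2) \iff \psi(y_1)=\psi(y_2) \iff \{y_1,\sigma(y_1)\}=\{y_2,\sigma(y_2)\}$, which proves Part 2: the fiber $\phi_D^{-1}(\phi_D(y))=\{y,\sigma(y)\}$ has size $1$ or $2$. For Part 1 in this case, $\phi_D(\Gamma)$ is homeomorphic to $\Gamma/\sigma \cong |D|$; but $|D|=R(D)/\eins$ is tropically convex, hence contractible by the corollary of Section \ref{sec:Map}, and is $1$-dimensional as a quotient of the $1$-dimensional $\Gamma$, so it is a tree. The main obstacle I anticipate is the slope bound identifying $D+(f_y)=\psi(y)$, which requires keeping careful track of how the max-of-generators dominates slopes of arbitrary tropical linear combinations; the non-tree gonality-one impossibility is a short direct computation.
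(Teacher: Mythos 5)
Your case $r(D)=0$ contains a genuine error: Baker--Norine rank $0$ does not imply $R(D)=\eins$. Rank $0$ only says that $|D-x|=\emptyset$ for \emph{some} point $x$; the linear system $|D|$ itself can still be positive-dimensional. The paper itself makes this point right after Lemma \ref{nbp} (a point $x$ on a bridge of a positive-genus curve has $r(x)=0$ while $|x|$ is base-point-free, hence not a singleton), and degree-two examples are just as easy: on a dumbbell graph (two loops joined by a bridge, genus $2$) take $D=2x$ with $x$ a generic point of one loop. Then $D\not\sim K$, so $r(D)=0$ by Riemann--Roch, yet the set $\{x\}$ can fire (two chips, two outgoing directions), so the chips split symmetrically around $x$ and $R(D)\neq\eins$; $\phi_D$ is far from constant. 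Since Part 1 of the proposition carries no rank hypothesis, your proof is missing exactly this case, and your $r(D)=1$ argument cannot be recycled for it because defining the involution $\sigma$ requires $|D-y|\neq\emptyset$ for \emph{every} $y$. The paper avoids the case split entirely: it extends $\phi_D(\Gamma)$ to an embedded tropical curve of degree $2$ via Theorem \ref{thm:deg} and argues that a degree-$2$ tropical curve in $\TP^{n-1}$ has no cycles (dual subdivisions of $2\Delta_2$ have no interior lattice points, plus projections). You need either that argument or a separate, correct treatment of $r(D)=0$.

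The cases $r(D)\in\{1,2\}$ are essentially right and follow a genuinely different route from the paper, which proves Part 2 by building explicit separating cut functions from the combinatorics of the cycles through $x$ and $x'$. Your identification of $\phi_D(y)$ with the divisor $D+(f_y)$, $f_y=\bigoplus_i\bigl(f_i-f_i(y)\bigr)$, via the homeomorphism of Section \ref{sec:Map}, together with the slope estimate $(f)(y)\le (f_y)(y)$ for all $f\in R(D)$, is a clean way to see that the fibers of $\phi_D$ are exactly the fibers of $y\mapsto y+\sigma(y)$. Two smaller points to tighten: the fact that two distinct points of a non-tree are never linearly equivalent deserves a real proof (e.g., the max-set of a function with divisor $z-z'$ has a single boundary point with a single outgoing direction of slope $-1$; iterating shows $\Gamma$ retracts onto a path), and the continuity of $\sigma$ is asserted but not needed --- the homeomorphism $\phi_D(\Gamma)\cong|D|$ already follows from restricting the homeomorphism of Section \ref{sec:Map} to the image of $\phi_D$.
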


\begin{proof}
A degree 2 tropical curve in $\TP^{n-1}$ cannot have cycles.  For $n = 3$, this follows from the facts that the polytope $2 \Delta_2 = 2 \cdot \text{conv}(e_1, e_2, e_3)$ in $\RR^3$ does not have any interior lattice points and that any degree 2 plane tropical curve is dual to a regular subdivision of a subset of $2 \Delta_2 \cap \ZZ^3$.  For higher $n$, this follows by induction and looking at projections.  By Theorem \ref{thm:deg}, the image  $\phi_D(\Gamma)$ can be extended to an embedded tropical curve of degree 2, so it must be a tree.

Now suppose $r(D) = 1$ and $x \in \Gamma$.  We have $\phi_D^{-1}(\phi_D(x)) = \{ x \}$ if $\Gamma \backslash x$ is disconnected, i.e.\ $x$ lies on a bridge.   Suppose $x$ lie in a cycle in $\Gamma$.  Since $r(D) = 1$ and $\deg(D) = 2$, there is a divisor $x + x' \in |D|$.  The point $x' \in \Gamma$ is the unique element of $|D - x|$ because of $x$ lies in a cycle.  Moreover $x'$ lies in every cycle that contains $x$; otherwise we would have $|D| = \{x+x'\}$, contradicting $r(D) = 1$.  Hence every cycle contains either both or none of $\{x, x'\}$.  Let $Z$ be a connected component of $\Gamma \backslash \{x, x'\}$.  Then $\val_Z(x) = \val_Z(x') = 1$; otherwise there would be a cycle that contains only $x$ or $x'$.  So there is a function $f \in R(x+x')$ such that $f(x)=f(x')=0$ but $f(q) < 0$ for any $q$ in the relative interior $Z^\circ$.  Together with the constant function $\eins \in R(x,x')$, $f$ separates $\{x,x'\}$ from $Z^\circ$.  The maps $\phi_D$ and $\phi_{x+x'}$ differ only by a translation,  so $\phi_D^{-1}(\phi_D(x)) \subset \{x, x'\}$. 
\end{proof}

\begin{theorem} \label{thm:canonical-embeddings}
  If $K$ is not very ample, then $\Gamma$ is hyperelliptic.
\end{theorem}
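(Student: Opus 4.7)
The plan is to use Riemann--Roch together with the cut-set characterization of very ampleness from the preceding lemma to exhibit a degree-$2$ divisor of rank $1$.

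First I would dispose of small-genus cases. Very ampleness presupposes $K \ge 0$, which forces every point of $\Gamma$ to have valence at least $2$; hence $\Gamma$ has no leaf edges and the genus satisfies $g \ge 1$. If $g = 1$ then $\Gamma$ is a circle, every degree-$2$ divisor has rank $1$ (since $\Pic^0(\Gamma)$ is one-dimensional), and $\Gamma$ is trivially hyperelliptic. So I assume $g \ge 2$.

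With $K$ not very ample, the preceding lemma furnishes distinct points $x, x' \in \Gamma$ such that no divisor in $|K|$ has support containing a smooth cut set separating $x$ from $x'$. I would show that $D := x + x'$ satisfies $r(D) = 1$. Riemann--Roch applied to $K$ gives $r(K) = g - 1$ (using $r(0) = 0$), and applied to $D$ gives $r(D) = r(K - x - x') + 3 - g$, so it is enough to prove the two inequalities $r(K - x - x') \ge g - 2$ and $r(D) \le 1$. The upper bound is easy: $r(D) = 2 = \deg D$ would make every effective degree-$2$ divisor linearly equivalent to $D$, trivializing $\Pic^0(\Gamma)$ and forcing $g = 0$, contradicting $g \ge 2$.

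The lower bound $r(K - x - x') \ge g - 2$ is the main step and the principal obstacle; this is the tropical analogue of the classical identity $h^0(K(-x-x')) = h^0(K) - 1$ at a pair of non-separated points. Unpacked, it asserts that for every effective $E$ of degree $g - 2$ there exists $D' \in |K|$ with $D' \ge x + x' + E$. Since $r(K) = g - 1$, I would first pick $D_1 \in |K|$ with $D_1 \ge x + E$; if $D_1(x') \ge 1$ one is done. Otherwise, the plan is to transfer one chip of $D_1$ onto $x'$ by a sequence of chip-firing moves (Proposition~\ref{prop:CFM}) preserving the chips at $x$ and at the points of $E$. I expect such a transfer to succeed via a Dhar-style burning argument: were it obstructed, the maximal subgraph reachable by firing from the $x'$-side while fixing $x + E$ would meet the support of some reachable $D' \in |K|$ in a set forming a smooth cut set that separates $x$ from $x'$, contradicting the hypothesis on $K$. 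Combining the two inequalities yields $r(D) = 1$, exhibiting $\Gamma$ as hyperelliptic.
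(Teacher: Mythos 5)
Your overall skeleton is sound and the bookkeeping is correct: taking a non-separated pair $x,x'$, the upper bound $r(x+x')\le 1$ via triviality of $\Pic^0$, and the Riemann--Roch reduction $r(x+x') = r(K-x-x')+3-g$ are all fine. But the entire content of the theorem is concentrated in the one step you flag as ``the principal obstacle,'' namely $r(K-x-x')\ge g-2$ (equivalently, $r(x+x')\ge 1$), and for that step you offer only an unproven heuristic. Note that by Riemann--Roch this inequality is \emph{equivalent} to the conclusion you are trying to prove for the pair $x,x'$, so nothing has been gained until it is established; the paper's proof consists entirely of establishing it, by decomposing $\Gamma\setminus\{x,y\}$ into components $X_i, Y_j, Z_k$ according to which of $x,y$ their closures meet, bounding the number $t$ of ``through'' components by $g-1\le t\le g+1$ via genus counting, and then checking case by case that either $|K|$ does separate $x$ and $y$ (contradicting the hypothesis) or $r(x+y)=1$ holds directly.

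The specific sketch you give for the key step does not go through as stated. First, the ``transfer a chip to $x'$ while fixing $x+E$'' problem is not an instance of Dhar's burning algorithm: reduced-divisor arguments characterize when $|F|\neq\emptyset$ for a single divisor class, not when a chip can be moved to a prescribed point subject to freezing a degree-$(g-1)$ subconfiguration, and the divisor $F=D_1-x-E$ has the threshold degree $g-1$ at which effectivity of $F-x'$ genuinely depends on the global geometry. Second, the obstruction you would need to extract must be a \emph{smooth} cut set contained in the support of an actual divisor in $|K|$ (that is what the non-separation lemma forbids); the boundary of a maximal fireable subgraph will in general contain higher-valent vertices, and converting it into a smooth separating cut set inside $\supp(D')$ for some $D'\in|K|$ is exactly where the paper has to argue geometrically (e.g.\ the subtle subcase of a genus-one component at equal distances $\xi=\eta$ from $x$ and $y$, versus $\xi\neq\eta$ where separation \emph{does} occur). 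Any correct completion of your sketch must be able to distinguish these configurations, which is to say it must reproduce something like the paper's case analysis; as written, the proposal has a genuine gap at its central step.
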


\begin{proof}
  Contracting a bridge edge does not affect either property.
  So it is safe to assume that $\Gamma$ has no leaf nodes.

  Suppose $K$ is not very ample, and let $x,y$ be two points that
  cannot be separated by $R(K)$. We claim that $r(x+y)=1$.
  The complement $\Gamma \setminus \{x,y\}$ splits into connected
  components $(X_i^\circ)_{i=1,\ldots,r}$,
  $(Y_j^\circ)_{j=1,\ldots,s}$, and $(Z_k^\circ)_{k=1,\ldots,t}$,
  whose closures satisfy
  $X_i \cap \{x,y\} = \{x\}$, 
  $Y_j \cap \{x,y\} = \{y\}$, and 
  $Z_k \cap \{x,y\} = \{x,y\}$.
  \begin{center}
    \input{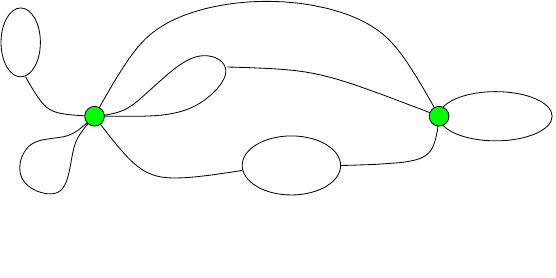_t}
  \end{center}
  Because $r(K)=g-1$, the linear systems $|K-(g-1)x|$ and
  $|K-(g-1)y|$ are non-empty. Thus we must have
  $$\sum_k \val_{Z_k}(x) \ge g, \text{ and } \sum_k \val_{Z_k}(y) \ge g.$$ 
  Otherwise we can separate $x$ and $y$, contradicting our assumption.
  Because $Z_k^\circ$ is connected, the genus $g(Z_k)$ of $Z_k$
  satisfies
  $$ g(Z_k) \ge \val_{Z_k}(x) + \val_{Z_k}(y) -2\,.$$
  On the other hand,
  $$g = \sum_i g(X_i) + \sum_j g(Y_j) + \sum_k g(Z_k) + t - 1 \,.$$
  These relations imply $g-1 \le t \le g+1$.  Moreover, at most one of the
  $g(X_i), g(Y_j), g(Z_k)$ can be positive as we will see below.
  There are (up to symmetry) three cases.

  $\bullet$ \underline{$t=g+1$:}
  We must have $r=s=0$, and all $g(Z_k)=0$. Then $r(x+y)=1$.
  \begin{center}
    \input{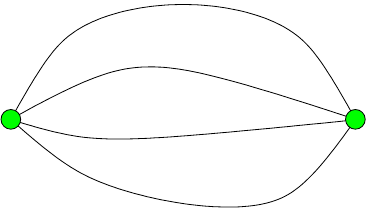_t}
  \end{center}
  
  $\bullet$ \underline{$t=g$:}
  There are two subcases.

  \mbox{}\quad$\circ$ \underline{$r=1$, $s=0$:} (or $r=0$, $s=1$.) In
  this case, $|K|$ separates $x$ and $y$ as illustrated in the
  figure.
  \begin{center}
    \input{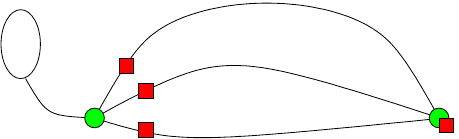_t}
  \end{center}
  
  \mbox{}\quad$\circ$ \underline{$r=s=0$:} this is the only subtle
  case. One $Z_k$ has genus $1$. Its cycle has distance $\xi$ from $x$
  and distance $\eta$ from $y$.  At least one of the distances must be non-zero because $Z^\circ_k$ is connected.
  \begin{center}
    \input{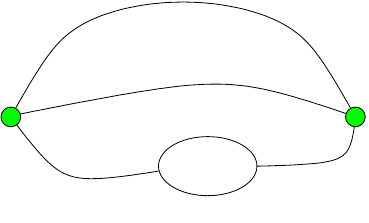_t}
  \end{center}
  If $\xi \neq \eta$, then $|K|$ separates $x$ and $y$.
  \begin{center}
    \input{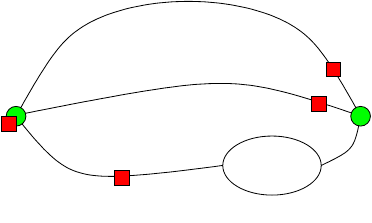_t}
  \end{center}
  But if $\xi=\eta$, then $r(x+y)=1$.

  $\bullet$ \underline{$t=g-1$:} In this case, all inequalities in our
  chain have to be sharp. In particular, $r=s=0$. There is either one
  $Z_k$ of genus $2$ or two $Z_k$'s of genus $1$. Either way, $|K|$
  separates $x$ and $y$.
  \begin{center}
    \input{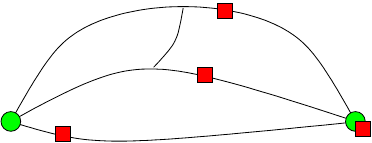_t} \qquad
    \input{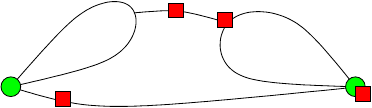_t} \qquad
    \input{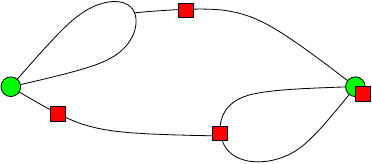_t}
  \end{center}
\end{proof}
We have proved more than what the theorem states.
In fact, we can choose a divisor $x+y$ with $r(x+y)=1$ and $(g-1)(x+y)
\sim K$.

Sadly, the converse is false -- even if we take the stronger statement
above into account. For example, a flower with $\ge 3$ petals
satisfies the stronger condition, yet the canonical divisor is very
ample.
\begin{center}
  \includegraphics{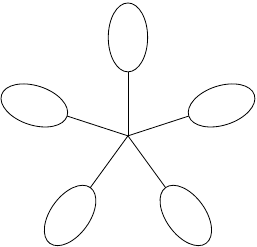}
\end{center}

The proof of Theorem~\ref{thm:canonical-embeddings} also yields the
following.
Here, a tropical curve is {\em generic} if maximal valency is three after contracting bridge edges, and the edge lengths are generic.

\begin{theorem}
  The canonical divisor of curves of genus $g \le 2$ is not very
  ample. In particular, such curves are hyperelliptic.
  Generic curves of genus $g \ge 3$ 
  are not hyperelliptic. In particular, their canonical divisor is very ample.
\end{theorem}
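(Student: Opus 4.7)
The plan is to prove the two halves separately, using Riemann-Roch together with Theorem~\ref{thm:canonical-embeddings}. Throughout I may assume $\Gamma$ is bridge-free, since contracting bridges preserves both very-ampleness of $K$ and hyperellipticity; in particular, in the generic $g\ge 3$ case $\Gamma$ becomes trivalent.

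For the first assertion I would handle each $g\le 2$ by direct Riemann-Roch computation. When $g=0$ the bridge-contracted curve is a point and $\deg K=-2<0$, so $K$ is not even effective, let alone very ample. When $g=1$ the reduced curve is a circle, so $K=0$ and $R(K)=\eins$ which separates no pair of points, whence $K$ is not very ample; every degree-two divisor $D=x+y$ then satisfies $r(D)=1$ by Riemann-Roch (as $\deg(K-D)=-2$ forces $r(K-D)=-1$), so $\Gamma$ is hyperelliptic. When $g=2$, applying Riemann-Roch to $K$ itself gives $r(K)-r(0)=1$, hence $r(K)=1$; thus $K$ already witnesses hyperellipticity. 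The preceding proposition on degree-two rank-one divisors then shows $\phi_K(\Gamma)$ is a tree with fibers of size at most two, but since $\Gamma$ has positive genus $\phi_K$ must collapse a cycle and cannot be injective, so $K$ is not very ample.

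For the second assertion I would prove that a generic trivalent $\Gamma$ of genus $g\ge 3$ is not hyperelliptic; $K$ is then very ample by the contrapositive of Theorem~\ref{thm:canonical-embeddings}. Suppose for contradiction that $\Gamma$ is hyperelliptic with witness $D=x+y$, $r(D)=1$. By the preceding proposition, $\phi_D\colon\Gamma\to\phi_D(\Gamma)$ is a metric degree-two cover of a tree, and the fiber-swapping map extends to an isometric involution $\iota\colon\Gamma\to\Gamma$ satisfying $z+\iota(z)\sim D$ for every $z\in\Gamma$. Because $\Gamma$ is trivalent, $\iota$ must permute the valence-three vertices and thus induce a combinatorial graph automorphism, and isometry forces $\ell(e)=\ell(\iota(e))$ for every edge $e$. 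If $\iota$ fixes every edge setwise then each edge is reversed endpoint-for-endpoint, which forces any two adjacent vertices to be interchanged by $\iota$; combined with $|V|=2(g-1)\ge 4$ and connectedness this is impossible for $g\ge 3$. Hence $\iota$ must swap some pair of distinct edges, yielding a non-trivial length equality among the edges. Ranging over the finitely many graph automorphisms of the combinatorial model, the edge-length vectors admitting any such length-preserving involution form a finite union of proper linear subspaces of the edge-length cone, and generic edge lengths avoid this union.

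The main obstacle is justifying the existence and isometric nature of $\iota$ in the tropical setting. I would extract it from the fact that $\phi_D$ is piecewise linear with integer slopes and generically two-to-one (by the preceding proposition): on each edge of the quotient tree $\phi_D(\Gamma)$ the two preimage arcs have the same arclength, so swapping them defines a well-defined length-preserving involution on $\Gamma$. Once $\iota$ is in hand, the combinatorial impossibility of an edge-wise trivial $\iota$ for $|V|\ge 4$ and the genericity of the resulting edge-length equalities are routine case-checks over the finitely many graph automorphisms of $\Gamma$, completing the argument.
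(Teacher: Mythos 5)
Your treatment of $g\le 2$ is correct and is essentially the argument the paper has in mind (the paper gives no written proof beyond ``the proof of Theorem~\ref{thm:canonical-embeddings} also yields the following''): Riemann--Roch gives $r(K)=g-1$, so for $g=2$ the canonical divisor is itself a degree-two, rank-one divisor; the proposition on degree-two divisors then forces $\phi_K(\Gamma)$ to be a tree, which cannot receive an injective continuous map from a positive-genus curve, so $K$ is not very ample. The $g=0,1$ cases are immediate, and you are right to quietly drop the ``hyperelliptic'' clause for $g=0$, where it is vacuous under the paper's definition.

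The $g\ge 3$ half is where the substance is, and your route --- a hyperelliptic involution $\iota$ with $z+\iota(z)\sim D$, followed by a genericity argument on edge lengths --- is genuinely different from the paper's (which extracts the claim from the valence/genus counting of the components of $\Gamma\setminus\{x,y\}$ in the proof of Theorem~\ref{thm:canonical-embeddings}). The genuine gap is the step you yourself flag: that $\iota$ exists as an \emph{isometry}. Uniqueness of the partner $\iota(z)$ does follow from the proposition when $\Gamma$ is bridgeless, but ``the two preimage arcs of an edge of $\phi_D(\Gamma)$ have the same arclength'' is not a consequence of $\phi_D$ being piecewise linear with integer slopes: a priori the two branches could carry slopes $1$ and $m>1$ and have lengths in ratio $m:1$. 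What is actually needed is that every $f\in R(x+y)$ has all slopes in $\{0,\pm1\}$ on a bridgeless curve; this is true (for a regular value $c$, the outgoing slopes at the boundary of $\{f\ge c\}$ are each $\le -1$ and sum to at least $-2$, and a single boundary germ would exhibit a bridge, so there are exactly two germs of slope $-1$), but neither your sketch nor the paper supplies it, and without it the involution --- and hence the entire genericity argument --- is not in hand. Two smaller repairs: an edge fixed setwise by $\iota$ may be fixed pointwise rather than reversed, so your dichotomy omits the mixed case (it can be ruled out: for a trivalent graph it forces either $\iota=\mathrm{id}$, impossible since the tropical Jacobian is not $2$-torsion, or the theta graph, which has genus $2$); and the continuity of $z\mapsto\iota(z)$ deserves a sentence. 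With those additions your argument closes, and it is arguably more complete than what the paper records.
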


\begin{problem} Give a characterization of curves with
  $K$ not very ample.
  \end{problem}

\section{Tropical Picard Group and Continuous Chip-Firing}
\label{sec:Picard}

Following the classical definition of the Picard group in algebraic geometry, we define the \emph{tropical Picard group} of a tropical curve as a quotient group.  In particular, we take the group of degree zero $\Gamma$-divisors modulo the group  of principal divisors, i.e.\ those of the form $(f)$ where $f$ is a tropical rational function.  Before describing these groups further, we consider a finite graph analogue.

Given a finite undirected graph $G=(V,E)$ with $V= \{v_0,v_1,\dots, v_{n-1} \}$, 
we define the Laplacian matrix of $G$ to be 
$$L(G) = D(G) - A(G)$$ where $A(G)$ is the adjacency matrix of $G$ ($A_{ij} = $ the number of edges between $v_i$ and $v_j$) and $D(G)$ is the diagonal matrix $$D(G) = diag(\val(v_0), \val(v_1),\dots, \val(v_{n-1})).$$ We let $L_0(G)$ denote the reduced Laplacian matrix obtained by deleting the row and column corresponding to vertex $v_0$.

We define the {\em critical group} $K(G,v_0)$ (following \cite{B99} or \cite{D90}) to be the cokernel
$$K(G,v_0) = \ZZ^{n-1} /( L_0(G)~ \ZZ^{n-1}).$$
Choosing a different $v_0$ gives an isomorphic critical group.
For our calculations, we will use a set of explicit coset representatives of $K(G, v_0)$, known as 
\emph{superstable chip configurations}, following \cite[Section 4]{HLMPPW}:
  
We say that a divisor $D$ is a \emph{chip configuration} on graph $G$ \emph{with sink $v_0$} if the degree of $D$ is zero and $D(v) \geq 0$ for $v \not = v_0$.  We can also think of $D$ as a vector $[D(v_0), \dots, D(v_{n-1})]^T \in \mathbb{Z}^n$.

Given chip configuration $D$, we say that vertex $v \in V(G) \setminus \{v_0\}$ is \emph{ready to fire} if $D(v) \geq \val(v)$.  
The {\em chip firing move} given by the vertex $v$ is the map $\ZZ^n \rightarrow \ZZ^n$ given by $D \mapsto D - L_{v}$ where $L_v$ is the column of the Laplacian matrix $L(G)$ corresponding to the vertex $v$.  In other words, a chip firing move on a chip configuration moves a chip from $v$ to each of its neighbors.

We say that $D$ is \emph{stable} if for all $v \in V(G) \setminus \{v_0\}$, $0 \leq D(v) < \val(v)$, i.e.\  no vertex in $V(G) \setminus \{v_0\}$ is ready to fire.  We say that $v_0$ is ready to fire in $D$ if and only if $D$ is stable.  We say that a sequence $[v_{\alpha_1},v_{\alpha_2},\dots, v_{\alpha_\ell}]$ is \emph{legal} if $v_{\alpha_{i+1}}$ is ready to fire in $D^{(i)}$, the chip configuration resulting from firing the sequence $[v_{\alpha_1},v_{\alpha_2},\dots, v_{\alpha_i}]$ on $D$.  

We say that a \emph{cluster} $A \subset V(G) \setminus \{v_0\}$ can fire if the result of all vertices $v \in A$ firing simultaneously results in a chip configuration with all coefficients nonnegative.  (Note that it is possible for a cluster $A$ to fire even if no ordering of the elements of $A$ gives rise to a legal firing sequence.)  Finally, a configuration is said to be \emph{superstable} if no cluster can fire.  

We now use these explicit descriptions of elements of critical groups $K(G,v_0)$ to describe the tropical Picard group.  We get an explicit group structure on the set of superstable chip configurations by defining the sum of $D_1$ and $D_2$ to be  $\overline{D_1+D_2}$, the unique superstable configuration linearly equivalent to $D_1+D_2$.  

Following the terminology of \cite{GathmannKerber}, a $\QQ$-graph is a metric graph $\Gamma$ having a model $G$, for which each of the edges have a rational length.  An ordinary finite graph can be thought of as a $\QQ$-graph where all edge lengths are $1$.  Given a general metric graph (or tropical curve) $\Gamma$, we let $\Gamma_\QQ$ denote the set of points of $\Gamma$ whose distance from every vertex is rational.  We let $\Div_\QQ(\Gamma)$ denote the set of divisors on $\Gamma_\QQ$, also referring to these as the $\QQ$-divisor on $\Gamma$.  Further, $\Div^0_\QQ(\Gamma)$ will define the set of degree $0$ $\QQ$-divisors.

If $\Gamma$ is a $\QQ$-graph, we let $\Prin_\QQ(\Gamma)$ denote the principal $\QQ$-divisors, i.e. the subset of $\QQ$-divisors which are of the form $(f)$ for $f$ a tropical rational function.  We then define the $\QQ$-tropical Picard group of a $\QQ$-tropical curve to be $\Pic_\QQ(\Gamma)$ to be the quotient group $\Div^0_\QQ(\Gamma) / \Prin_\QQ(\Gamma)$.

\begin{theorem} \label{QPicard}
The $\QQ$-tropical Picard group of a $\QQ$-tropical curve $\Gamma$ is the direct limit of the critical groups corresponding to the subdivisions of $\Gamma$.
\end{theorem}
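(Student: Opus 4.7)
The plan is to construct an explicit isomorphism between $\Pic_\QQ(\Gamma)$ and $\varinjlim_N K(G_N, v_0)$. After rescaling a fixed $\QQ$-model of $\Gamma$ so that all edge lengths lie in $\tfrac{1}{\NN}$, let $G_N$ denote the subdivision (for $N$ sufficiently divisible) in which every edge of this initial model is cut into equal pieces of length $\tfrac{1}{N}$; the family $\{G_N\}$, directed by divisibility $N \mid M$, is cofinal in the directed system of all $\QQ$-subdivisions. A base vertex $v_0 \in V(G_N)$ for all $N$ is fixed throughout.

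For each $N$ I define $\varphi_N : K(G_N, v_0) \to \Pic_\QQ(\Gamma)$ by sending the class of $D \in \Div^0(V(G_N))$ to its class in $\Pic_\QQ(\Gamma)$ (via $V(G_N) \subseteq \Gamma_\QQ$). The key identity behind both well-definedness and injectivity is that whenever a tropical rational function $f$ on $\Gamma$ has its bend locus contained in $V(G_N)$, computing the order at each vertex from outgoing slopes (each $G_N$-edge of length $\tfrac{1}{N}$ from $v$ to $v'$ contributes outgoing slope $N(f(v')-f(v))$ at $v$) gives
\[
(f) \;=\; -\,N\, L(G_N)\, \mathbf{f}, \qquad \mathbf{f} := \big(f(v)\big)_{v \in V(G_N)}.
\]
Applied to the piecewise linear function $h_\mathbf{k}$ with $h_\mathbf{k}(v) = -\mathbf{k}(v)/N$ for $\mathbf{k} \in \ZZ^{V(G_N)}$ (linearly interpolated with integer slopes $\mathbf{k}(v)-\mathbf{k}(v')$ on each edge), this identity yields $(h_\mathbf{k}) = L(G_N)\,\mathbf{k}$, so every Laplacian vector is a principal $\QQ$-divisor and $\varphi_N$ descends to the critical group. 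Applying the same identity on $G_M$ (for $N \mid M$) to the very same function $h_\mathbf{k}$, whose values at $V(G_M)$ lie in $\tfrac{1}{M}\ZZ$ precisely because $N \mid M$, expresses $L(G_N)\mathbf{k}$ as $L(G_M)(-M\mathbf{h}')$ with $\mathbf{h}' = (h_\mathbf{k}(v))_{v \in V(G_M)}$, giving compatibility of $\varphi_N$ with the transition map $K(G_N,v_0) \to K(G_M,v_0)$ induced by the vertex inclusion $V(G_N) \hookrightarrow V(G_M)$. By the universal property of direct limits, these assemble into a homomorphism $\Phi : \varinjlim_N K(G_N, v_0) \to \Pic_\QQ(\Gamma)$.

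Surjectivity of $\Phi$ is immediate, since any degree-zero $\QQ$-divisor has finite support in $\Gamma_\QQ$ and hence lies in $\Div^0(V(G_N))$ for $N$ sufficiently divisible. The main technical point, which I expect to demand the most care, is injectivity, for which it suffices to show that each $\varphi_N$ is already injective. Suppose $D \in \Div^0(V(G_N))$ with $D = (f)$ for a tropical rational function $f$ on $\Gamma$, normalized by an additive constant so that $f(v_0) = 0$. Because every bend point of $f$ lies in $\supp(D) \cup V_0(\Gamma) \subseteq V(G_N)$, the function $f$ is linear with an integer slope on every edge of $G_N$, and integrating these slopes along any path from $v_0$ shows $f(v) \in \tfrac{1}{N}\ZZ$ for every $v \in V(G_N)$. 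The displayed identity then reads $D = L(G_N)(-N\mathbf{f})$, and since $-N\mathbf{f}$ has integer entries, $[D] = 0$ in $K(G_N, v_0)$, completing the argument.
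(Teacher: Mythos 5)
Your proof is correct, and while it shares the paper's overall architecture --- the directed system of subdivisions $G_k$ with transition maps induced by inclusion of vertex sets, and the principle that principal divisors correspond to images of graph Laplacians --- it executes the key identification differently, and more completely. The paper routes everything through superstable configurations: its Lemma~\ref{TransitionMaps} shows that superstability is preserved under subdivision (whence the maps $\psi_{k_1,k_2}$ are injective and compatible with the group law), constructs the direct limit abstractly, and then asserts that the identification with $\Pic_\QQ(\Gamma)$ is ``direct to verify,'' citing only that Laplacian images are exactly the principal divisors. You bypass superstability entirely and instead exhibit the homomorphisms $\varphi_N : K(G_N,v_0) \to \Pic_\QQ(\Gamma)$ explicitly, with the identity $(f) = -N\,L(G_N)\,\mathbf{f}$ doing all the work: it gives well-definedness (every Laplacian vector is principal, via $h_{\mathbf{k}}$), compatibility with the transition maps, and --- the part the paper leaves implicit --- injectivity of each $\varphi_N$, since a principal divisor supported on $V(G_N)$ comes from an $f$ whose bend locus lies in $V(G_N)$, so $f$ is linear on $G_N$-edges with vertex values in $\frac{1}{N}\ZZ$ and the divisor lies in $L(G_N)\,\ZZ^{V(G_N)}$. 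What the paper's route buys is the concrete system of superstable coset representatives, which it then uses in the subsequent examples and in the chip-firing emulation lemma; what your route buys is a self-contained proof of the isomorphism itself, since injectivity of the maps into $\Pic_\QQ(\Gamma)$ (not merely of the transition maps) is exactly the point that requires the slope computation you supply.
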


A more general version of this Theorem was independently proven by Baker and Faber \cite[Theorem 2.9 and 2.10]{BF2}.  Given $\QQ$-tropical curve $\Gamma$ we uniformly scale all edges to get $\Gamma'$ such that all edge lengths are integers.  We define $G_0$ to be the finite graph obtained by taking the coarsest finite graph structure on $\Gamma'$, with vertices given by points of valence one or $\geq 3$.  In the case of the cycle graph, all points are of valence two, so up to symmetry we define $G_0$ to be the graph with one vertex and one edge which is a loop at that vertex.  This is the unique example of $\Gamma$ with no points of valence one or $\geq 3$.

Without loss of generality, pick $v_0$ to be any vertex of $G_0$.
Let $G_k$ to be the model obtained by choosing more points as vertices such that all edges of $G_k$ have length $1/k$.  Since any vertex of $G_0$ is a vertex of $G_k$, it follows that $v_0$ is also a vertex of $G_k$ for all $k\geq 1$.  
Let $K(G_k,v_0)$ denote the critical group on subdivided graph $G_k$, using superstable configurations as coset representatives.

When $k_1$ divides $k_2$, we note that all vertices of $G_{k_1}$ are in  $G_{k_2}$, and thus we can define $\psi_{k_1,k_2}$ to be the map 
$$\psi_{k_1,k_2} : K(G_{k_1},v_0) \rightarrow K(G_{k_2},v_0)$$ sending the superstable configuration $D = \sum_{v \in G_{k_1}} D(v) \cdot v$ to $D' = \sum_{v \in G_{k_2} } D(v) \cdot v$.  Note here that the image, $D'$ has a coefficient of zero attached to any vertex $v \in G_{k_2}$ which is not in $G_{k_1}$. 

Since the valence of a vertex $v \in G_{k_1} \cap G_{k_2}$ is the same in both of these graphs, it follows that $D$ is a \emph{stable} configuration with respect to $G_{k_1}$ if and only if $\psi_{k_1,k_2}(D)$ is stable with respect to $G_{k_2}$.  We now wish to show that the superstability of $D$ with respect to $G_{k_1}$ implies the superstability of $\psi_{k_1,k_2}(D)$ with respect to $G_{k_2}$.  To see this equivalence, we note the following.

\begin{lemma} \label{TransitionMaps}
If $D$ is superstable with respect to graph $G_{k_1}$ and vertex $v_0$, then $\psi_{k_1,k_2}(D)$ is superstable with respect to graph $G_{k_2}$ and vertex $v_0$.  Furthermore, if $\overline{D_1 + D_2} = D_3$ in $K(G_{k_1},v_0)$, then $\overline{\psi_{k_1,k_2}(D_1) + \psi_{k_1,k_2}(D_2)} = \psi_{k_1,k_2}(D_3)$.
\end{lemma}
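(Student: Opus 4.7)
The plan is to exploit the fact that each vertex of $V(G_{k_2}) \setminus V(G_{k_1})$ lies in the interior of some edge of $G_{k_1}$, hence has valence $2$ in $G_{k_2}$ and carries zero chips in $\psi_{k_1,k_2}(D)$. Both parts of the lemma will follow from this rigidity.

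For the first assertion I would argue by contrapositive: assume some cluster $A \subset V(G_{k_2}) \setminus \{v_0\}$ fires in $\psi_{k_1,k_2}(D)$ and extract a firable cluster $A_0 \subset V(G_{k_1}) \setminus \{v_0\}$ for $D$ in $G_{k_1}$. The crucial local fact is a propagation principle: if a new vertex $p$ lies in $A$, then the cluster move takes its value from $0$ down by $\val(p)=2$, gaining at most one chip per neighbor in $A$, so both of $p$'s neighbors must also lie in $A$. Iterating along the chain of subdivision points on a single edge $e$ of $G_{k_1}$, if any interior point of $e$ lies in $A$ then every subdivision point of $e$ together with both $V(G_{k_1})$-endpoints of $e$ also lies in $A$. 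In particular $A_0 := A \cap V(G_{k_1})$ is nonempty. To check that $A_0$ fires $D$ in $G_{k_1}$, fix $v \in A_0$; for every $G_{k_1}$-edge $e$ incident to $v$ whose other endpoint $v'$ lies outside $A_0$, the contrapositive of propagation forces the first subdivision point $p_1^{(e)}$ on $e$ to lie outside $A$. Hence the number of $G_{k_2}$-neighbors of $v$ outside $A$ dominates the number of $G_{k_1}$-neighbors of $v$ outside $A_0$, and since $A$ fires in $\psi_{k_1,k_2}(D)$ we get $D(v) \geq \#\{G_{k_1}\text{-neighbors of $v$ outside $A_0$}\}$, contradicting superstability of $D$.

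For the second assertion, the point is that $\psi_{k_1,k_2}$ preserves the chip-firing equivalence relation. Firing a vertex $v \in V(G_{k_1})$ in $G_{k_1}$ is the metric chip-firing move $CF(\{v\}, 1/k_1)$ on $\Gamma$, and on the refined model $G_{k_2}$ this decomposes as the sum, for $i = 0, 1, \ldots, k_2/k_1 - 1$, of the cluster-firings $CF(B_i, 1/k_2)$ where $B_i$ is the set of vertices of $G_{k_2}$ at distance at most $i/k_2$ from $v$. A direct telescoping check at each vertex of $G_{k_2}$ confirms that this sum produces exactly the Laplacian-column move at $v$ in $G_{k_1}$: the $v$-entry contributes $-m\val(v) + (m-1)\val(v) = -\val_{G_{k_1}}(v)$, the interior subdivision points cancel out, and each $G_{k_1}$-neighbor $v'$ gains exactly one chip from the firing at $i = m-1$. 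Consequently $D_1 + D_2 \sim D_3$ in $G_{k_1}$ implies $\psi_{k_1,k_2}(D_1) + \psi_{k_1,k_2}(D_2) \sim \psi_{k_1,k_2}(D_3)$ in $G_{k_2}$. By part (1), $\psi_{k_1,k_2}(D_3)$ is superstable in $G_{k_2}$, so by uniqueness of the superstable representative of each equivalence class it must equal $\overline{\psi_{k_1,k_2}(D_1) + \psi_{k_1,k_2}(D_2)}$, as required.

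The main obstacle I anticipate is handling the valence bookkeeping in the presence of loops and multi-edges of $G_{k_1}$, where a subdivided edge may have its two $G_{k_1}$-endpoints coincide or share endpoints with other edges. Since both the propagation argument and the neighbor inequality are local to individual edges, they still go through provided one treats each edge as a distinct contribution to the valence counts and to the telescoping computation, so that a loop at $v$ is counted twice toward $\val_{G_{k_1}}(v)$ and parallel edges are handled independently.
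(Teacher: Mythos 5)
Your proof is correct and follows essentially the same route as the paper: your propagation principle (a firable cluster containing a subdivision point, which carries zero chips and has valence two, must contain both its neighbors and hence the whole subdivided edge with its $G_{k_1}$-endpoints) is precisely the paper's observation that a cluster containing a new degree-$2$ vertex but missing one of its neighbors cannot fire, and your reduction to a firable cluster $A_0 = A \cap V(G_{k_1})$ contradicting superstability matches the paper's case analysis. For the second assertion, your telescoping decomposition of a $G_{k_1}$-vertex firing into nested cluster firings on $G_{k_2}$ is exactly the content of the paper's subsequent Lemma \ref{emulate}, and the final step via uniqueness of the superstable representative in each class is identical to the paper's argument.
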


\begin{proof} If $D$ is \emph{superstable} with respect to $G_{k_1}$, then it follows that any cluster of vertices of $G_{k_1}$ outside $v_0$ cannot fire.  In particular, for all subsets $S \subset V(G_{k_1})$, it follows that there exists at least one $v \in S$ such that, $\mathrm{outdeg}_S(v) > D(v)$. 

The configuration $\psi_{k_1,k_2}(D)$ has the same support as $D$ by construction.  However, to check superstability of $\psi_{k_1,k_2}(D)$, we must consider all clusters of vertices in $G_{k_2} \setminus \{v_0\}$.  Consider any subset $S \subset V(G_{k_1}) \setminus \{v_0\}$, such a subset is also a subset of $V(G_{k_2}) \setminus \{v_0\}$.  For any $v \in V(G_{k_1})$, the outdegree of $v$, with respect to $S$ is the same in $G_{k_2}$ as it was in $G_{k_1}$ so such subsets $S$ cannot fire in $V(G_{k_2})$.  Any subset $S \in V(G_{k_2})$ formed by adjoining all vertices along a subdivided edge similarly cannot fire.  Finally, any subset $S$ which contains a new degree $2$ vertex $v'$ in $V(G_{k_2})\setminus V(G_{k_1})$ but not the two neighbors of $v'$ cannot fire since $\mathrm{outdeg}_S(v') > 0 = D(v')$ for such subsets.

Lastly, if $\overline{D_1+D_2} = D_3$, then $\psi_{k_1,k_2}(D_1+D_2) = \psi_{k_1,k_2}(D_1) + \psi_{k_1,k_2}(D_2)$.  We also have $\psi_{k_1,k_2}(\overline{D_1+D_2}) = \overline{\psi_{k_1,k_2}(D_1+D_2)}$ since for any $D_3$, $\psi_{k_1,k_2}(\overline{D_3})$ is superstable and is linearly equivalent to $\psi_{k_1,k_2}(D_3)$ so $\psi_{k_1,k_2}(\overline{D_3})$ must equal $\overline{\psi_{k_1,k_2}(D_3)}$.
\end{proof}

\begin{proof} [Proof of Theorem \ref{QPicard}] From Lemma \ref{TransitionMaps}, we see that  the transition maps preserve superstability and are compatible with addition.  Furthermore, the transition maps are injective and satisfy $\psi_{k_1,k_3} = \psi_{k_2,k_3} \circ \psi_{k_1,k_2}$ when $k_1 | k_2 | k_3$.  We use these transition maps to define the direct limit $$\overline{K}(\Gamma,v_0) := 
\mathop{\lim_{\longrightarrow}}_{k\geq 1}~ \{K(G_k,v_0)\} = 
\bigcup_{k=1}^\infty K(G_k,v_0) \bigg / \sim$$
where $D \in K(G_{k_1},v_0)$ and $D' \in K(G_{k_2},v_0)$ are equivalent if and only if $\psi_{k_1,k_3}(D) = \psi_{k_2,k_3}(D')$ in $K(G_{k_3},v_0)$ for $k_3 = \lcm(k_1,k_2)$.  Since the $\psi_{k_i,k_j}$'s are injective group homomorphisms, for all $k\geq 1$, $\overline{K}(\Gamma,v_0)$ contains a subgroup isomorphic to $K(G_k,v_0)$.

Thus the direct limit group is well-defined and it is direct to verify that its definition agree with that of the $\QQ$-tropical Picard group. 
In particular, we can think of this direct limit as the critical group on the infinite graph with countably infinitely many vertices, one vertex for each (rational) point of $\Gamma_\QQ$.  Since the divisors in the image of the Laplacian matrix are exactly those divisors which are of the form $(f)$, it follows that the tropical Picard group of a $\QQ$-tropical curve $\Gamma$ is the direct limit of critical groups as claimed.
\end{proof}

One advantage of this description of the $\QQ$-tropical Picard group on $\Gamma$ as a direct limit of finite critical groups is that we can simulate (weighted-) chip-firing moves on $\Gamma$ by looking at the limit of firing sequences on the finite subdivisions.  

For all vertices $v \in G_{k_1}$, we define $H_0(v) = \{v\}$, let $N(H)$ denote the neighbors of graph $H$, and inductively define $H_i(v)$ as $$H_i(v) = H_{i-1}(v) \cup N(H_{i-1}(v)).$$ 

In other words, letting $m = \frac{k_2}{k_1}$,  $H_m(v)$ is the unique induced subgraph of $G_{k_2}$ which is a tree with root $v$ and leaves given by neighbors of $v$ in unsubdivided graph $G_{k_1}$, and $\{v_0\} = H_0(v) \subset H_1(v) \subset H_2(v) \subset \dots \subset H_{m-1}(v)$ is a chain of subgraphs;  see Figure \ref{fig:Hk}.

\vspace{1em}

\begin{figure}
	\centering
	\scalebox{0.4}{\input{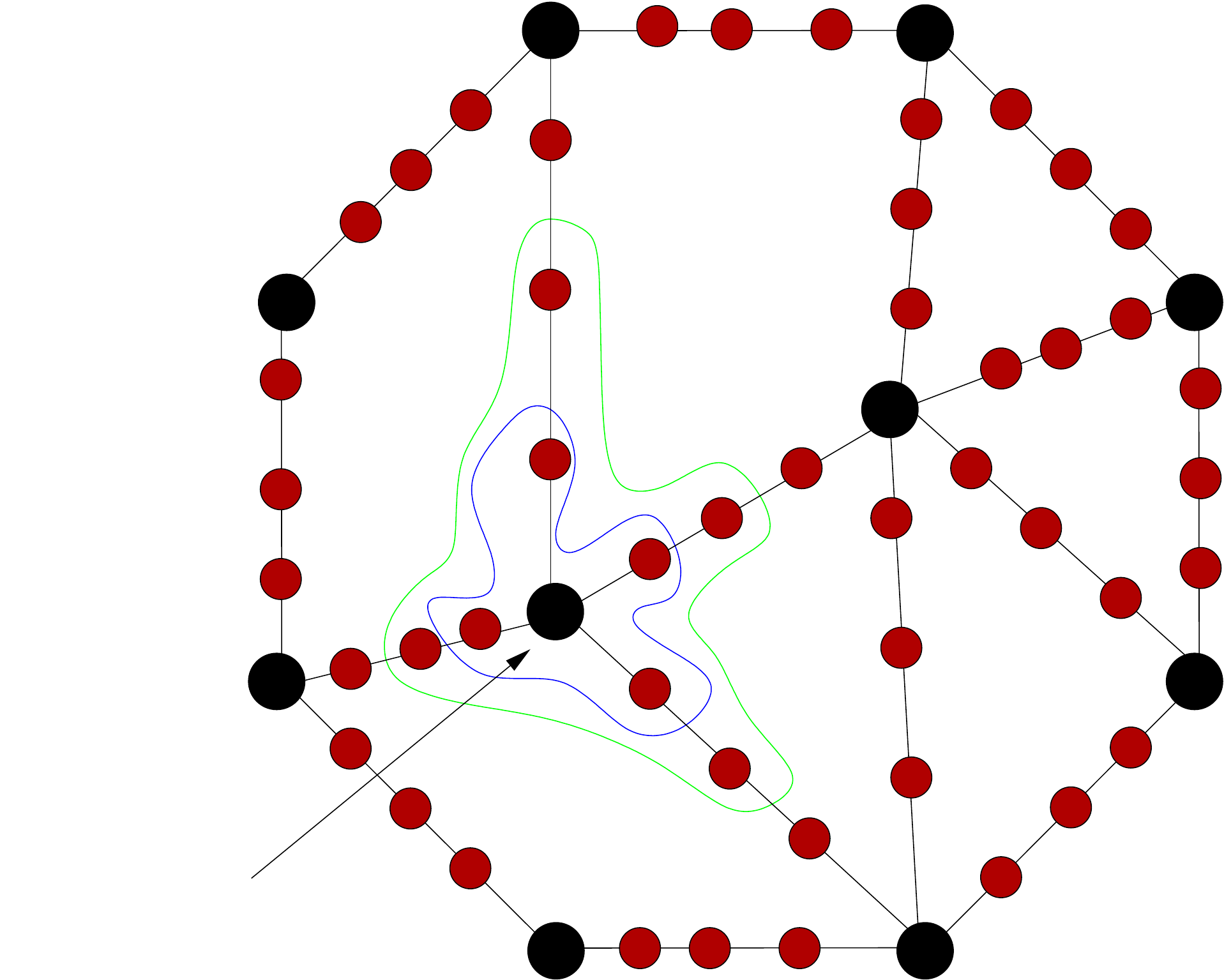_t}}
	\caption{The nested subgraphs $H_0(v)=\{v\} \subset H_1(v) \subset H_2(v) \subset \dots \subset H_{m-1}(v)$.}
	\label{fig:Hk}
\end{figure}

Using this notation, we get a method for emulating firing sequences of vertices in $V(G_{k_1}\setminus \{v_0\})$

\begin{lemma} \label{emulate}
Assume for all $0 \leq i \leq m-1$ that we fire subset $H_i(v)$ by firing vertex $v$ first and then radiating outwards towards the leaves.  If $D_{(k_1)}$ is a chip configuration on $G_{k_1}$ such that vertex $v$ is ready, then the firing of vertex $v$ in graph $G_{k_1}$ can be emulated on chip configuration $D_{(k_2)}$ in graph $G_{k_2}$ by firing sequence 
$$[H_{m-1}(v),H_{m-2}(v),\dots, H_2(v), H_1(v), H_0(v)].$$
\end{lemma}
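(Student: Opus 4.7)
\emph{Proof proposal.} The plan is to verify the lemma by direct Laplacian bookkeeping. Since a cluster firing of $A \subset V(G_{k_2})$ subtracts $\sum_{u \in A} L^{(k_2)}_u$ from the current divisor, and since firing $H_i(v)$ in BFS order has the same net effect on chip counts as the corresponding simultaneous cluster firing, the whole emulation reduces to the matrix identity
\[
\sum_{i=0}^{m-1} \sum_{u \in H_i(v)} L^{(k_2)}_u \;=\; L^{(k_1)}_v,
\]
where the right-hand side is extended by $0$ on the subdivision vertices of $G_{k_2}$. Once this identity is established, the resulting divisor equals $\psi_{k_1,k_2}(D_{(k_1)} - L^{(k_1)}_v)$, whose nonnegativity is automatic from the readiness hypothesis on $v$.

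First I would count multiplicities: a vertex $u$ of $G_{k_2}$ at graph-distance $j$ from $v$ lies in $H_i(v)$ exactly when $j \le i$, so for $0 \le j \le m-1$ it is fired $m - j$ times in the sequence, while vertices at distance $\ge m$---including the original $G_{k_1}$-neighbors $w_1, \dots, w_k$ of $v$---are never fired. Then I would add up the scaled columns $(m - j)\,L^{(k_2)}_u$ row by row, using $\val_{G_{k_2}}(v) = \val_{G_{k_1}}(v)$ and that every subdivision vertex has valence $2$. Row $v$ yields $m\val_{G_{k_1}}(v) - (m-1)\val_{G_{k_1}}(v) = \val_{G_{k_1}}(v)$; an interior subdivision vertex at distance $1 \le j \le m-2$ yields $2(m-j) - (m-j+1) - (m-j-1) = 0$; the boundary subdivision vertex at distance $m-1$ yields $2\cdot 1 - 2 - 0 = 0$; and each $w_j$ receives $-1$ from the single firing of its unique $G_{k_2}$-neighbor at distance $m-1$, while all other rows are zero. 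This matches $L^{(k_1)}_v$ entry by entry.

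The main subtlety, rather than a genuine obstacle, is the asymmetry at the outer layer $j = m-1$. The telescoping cancellation that makes intermediate subdivision rows vanish depends on both neighboring layers being fired with the right multiplicities; at the outermost subdivision layer this symmetry breaks precisely because the old vertices $w_j$ lie outside every $H_i(v)$. This very asymmetry is what produces the needed $-1$ at each $w_j$ while still preserving the $0$ at the boundary subdivision vertices, thereby recovering $L^{(k_1)}_v$ exactly and completing the emulation.
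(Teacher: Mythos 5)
Your proposal is correct, and it proves the lemma by a genuinely different route from the paper. You compute the \emph{net} effect of the whole firing sequence as a single Laplacian identity: counting that a vertex at distance $j$ from $v$ is fired $m-j$ times and summing the scaled columns $(m-j)L^{(k_2)}_u$ row by row to recover $L^{(k_1)}_v$ extended by zero. Your row computations (the telescoping $2(m-j)-(m-j+1)-(m-j-1)=0$ at interior subdivision vertices, the boundary asymmetry producing $-1$ at each $w_j$, and $\val(v)$ at $v$) all check out, and since single-vertex firings commute at the level of net chip counts, this suffices for the stated conclusion. The paper instead gives a dynamic ``cascading'' argument: it tracks the chips as they physically radiate outward from $v$ through the subdivision vertices, so that each cluster firing visibly moves one chip per arm one step closer to the old neighbors $w_j$. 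What the paper's version buys is a picture in which the intermediate configurations stay effective (this is what Figure \ref{fig:Hk-firing} depicts), though strictly speaking that requires firing the clusters in increasing order $H_0, H_1,\dots,H_{m-1}$ rather than the decreasing order written in the lemma statement; your argument is order-independent and silently sidesteps this issue, which is arguably cleaner for the net-effect claim but discards the legality-of-intermediate-steps information. Your version is also more robust than the paper's narrative, which contains the puzzling assertion that the new valence-two vertices ``start with exactly one chip'' (under $\psi_{k_1,k_2}$ they start with zero). One small caveat: your phrase ``unique $G_{k_2}$-neighbor at distance $m-1$'' of $w_j$ assumes a simple edge between $v$ and $w_j$; for parallel edges (as in the banana graph example) each parallel edge contributes its own subdivision path and its own $-1$, which still matches the multigraph Laplacian entry $L^{(k_1)}_{w_j,v}$, so the argument survives with only a wording adjustment.
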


\begin{proof}
We assume $ m = k_2/ k_1 \geq 2$ since the statement is trivial otherwise.  Since vertex $v$ can fire the coefficients $D_{(k_1)}^*(v) = D_{(k_2)}^*(v)  \geq \val(v)$ where this valence is the same in both graphs $G_{k_1}$ and $G_{k_2}$.  Thus $v$ can fire in $G_{k_2}$.  Since its neighbors are valence two vertices which were not in $G_{k_1}$, they each start with exactly one chip on them, so after vertex $v$ fires, they have two chips on them and thus they in turn can fire.  Continuing in this way, we have a cascading effect, and the end result of firing subgraph $H_{m-1}(v)$ is the addition of one chip to each neighbor $w_j$ of vertex $v$ (with respect to graph $G_{k_1}$) and the subtraction of one chip from the leaves of tree $H_{m-1}(v)$.  See Figure \ref{fig:Hk-firing}.  

By analogous logic we can now fire $H_{m-2}(v), H_{m-3}(v), \dots , H_0(v)$ in turn, which results in a configuration identical to the one we obtain by firing vertex $v$ in graph $G_{k_1}$.
\end{proof}

\begin{example} [Circle]
\item The Picard group of a circle $\Gamma = S^1$ with length $\ell$ is the circle group $\RR / {\ell ~ \ZZ} \cong S^1$.  In particular, if we look at the finite model $G = (\{v\}, \{e\})$, where $e$ is a loop edge, and then subdivide $e$ into $k$ equal segments, we obtain $G_k = C_k$, the $k$-cycle graph.  It is direct to verify that $\Pic(G_k) \cong \ZZ / k \ZZ$ with superstable representatives given by a divisor of the form $(w)$ or $0$, where $w$ is any vertex of $\Pic(G_k)$.  The group law then exactly matches the group law on $S^1$ so we get the appropriate direct limit.
\end{example}

\begin{example} [Genus 2 Banana Graph]
Let $\Gamma$ be the metric graph with two vertices ($v_1$ and $v_2$) connected by three parallel edges.  We can use finite models of this graph to gain intuition for the structure of $\Gamma$'s tropical Picard group.  Let $G_k$ be the finite graph on $3k+2$ vertices defined as the union of the three path graphs $[v_1, x_1,x_2,\dots, x_k, v_2]$, $[v_1, y_1,y_2,\dots, y_k, v_2]$, and $[v_1, z_1,z_2,\dots, z_k, v_2]$, see Figure \ref{fig:banana}.  By direct verification we see that $\Pic(G_k) \cong \ZZ / (k+1) \ZZ \times \ZZ / (3k+3) \ZZ$ using the generators $D_1 = x_1 - v_1$ and $D_2 = y_2 + z_1 - 2v_1$.  (If $k =1$, we use $D_2 = v_2 + z_1 - 2v_1$ instead.)  

\begin{figure}
	\centering
	\scalebox{0.5}{\input{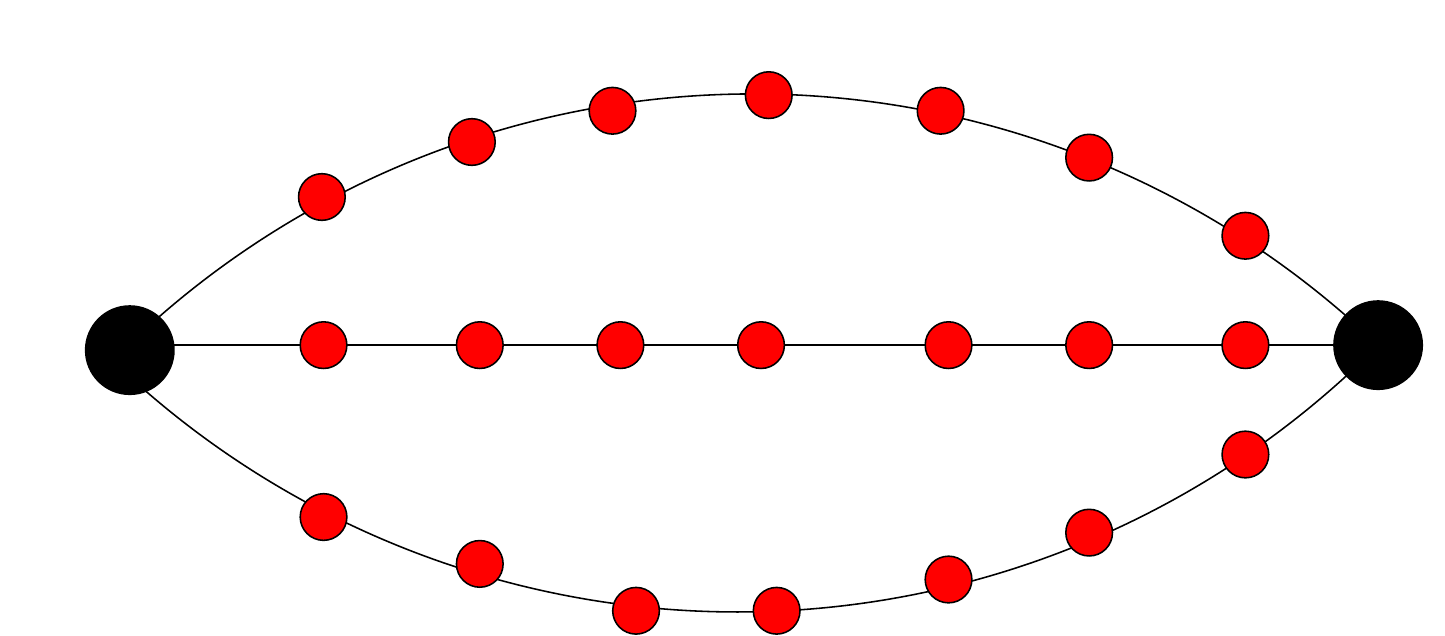_t}} 
	\caption{Subdivided genus $2$ Banana graph.}
	\label{fig:banana}
\end{figure}

In particular multiples of $D_1$ have the form $$x_m - v_1, ~~ v_2 - v_1, ~~ x_m + v_2 - 2v_1, ~~ 2v_2 - 2v_1, \mathrm{~~or~~} y_{k+1-m} + z_{k+1-m} - 2v_1,$$ in order.  Multiples of $D_2$ look like 
$$y_{2m} + z_m - 2v_1,~~ v_2 + z_{\frac{k+1}{2}} - 2v_1, \mathrm{~~or~~} x_{k+1 - 2m} + z_{\frac{k+1}{2}-m} - 2v_1,$$ in order, if $k$ is odd, and the case where $k$ is even is analogous.  Taking the direct limit, we thus see that $\Pic_{\QQ}(\Gamma) \cong \QQ/\ZZ \times \QQ/\ZZ$ generated by divisor classes $\overline{D_1}$ and $\overline{D_2}$ where sample representatives are of the form $$D_1 = x_\alpha - v_1 \mathrm{~and~} D_2 = y_{2\beta} + z_\beta - 2v_1.$$  

\begin{figure}
	\centering
	\scalebox{0.3}{\input{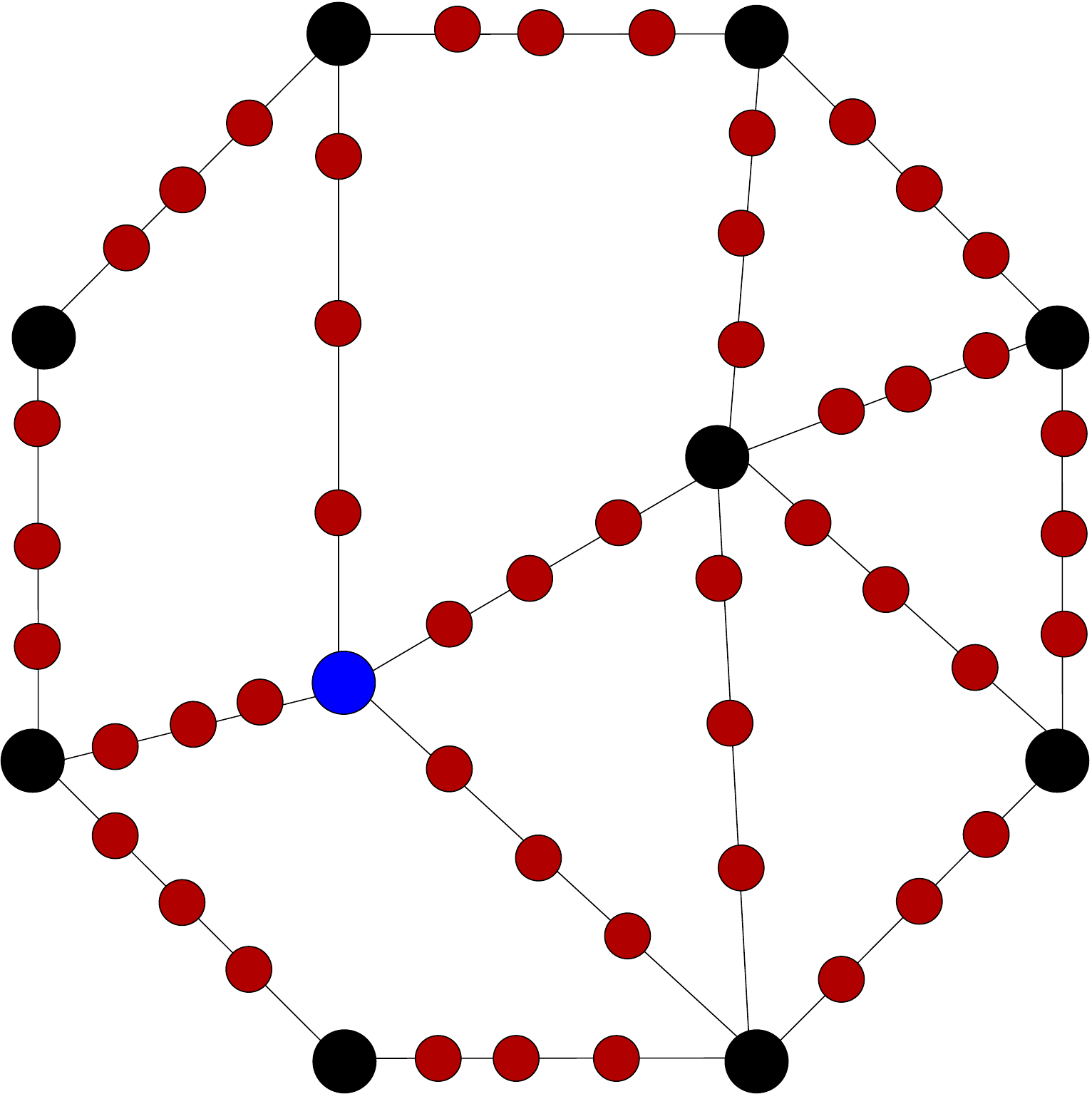_t}} \hspace{2em}
	\scalebox{0.25}{\input{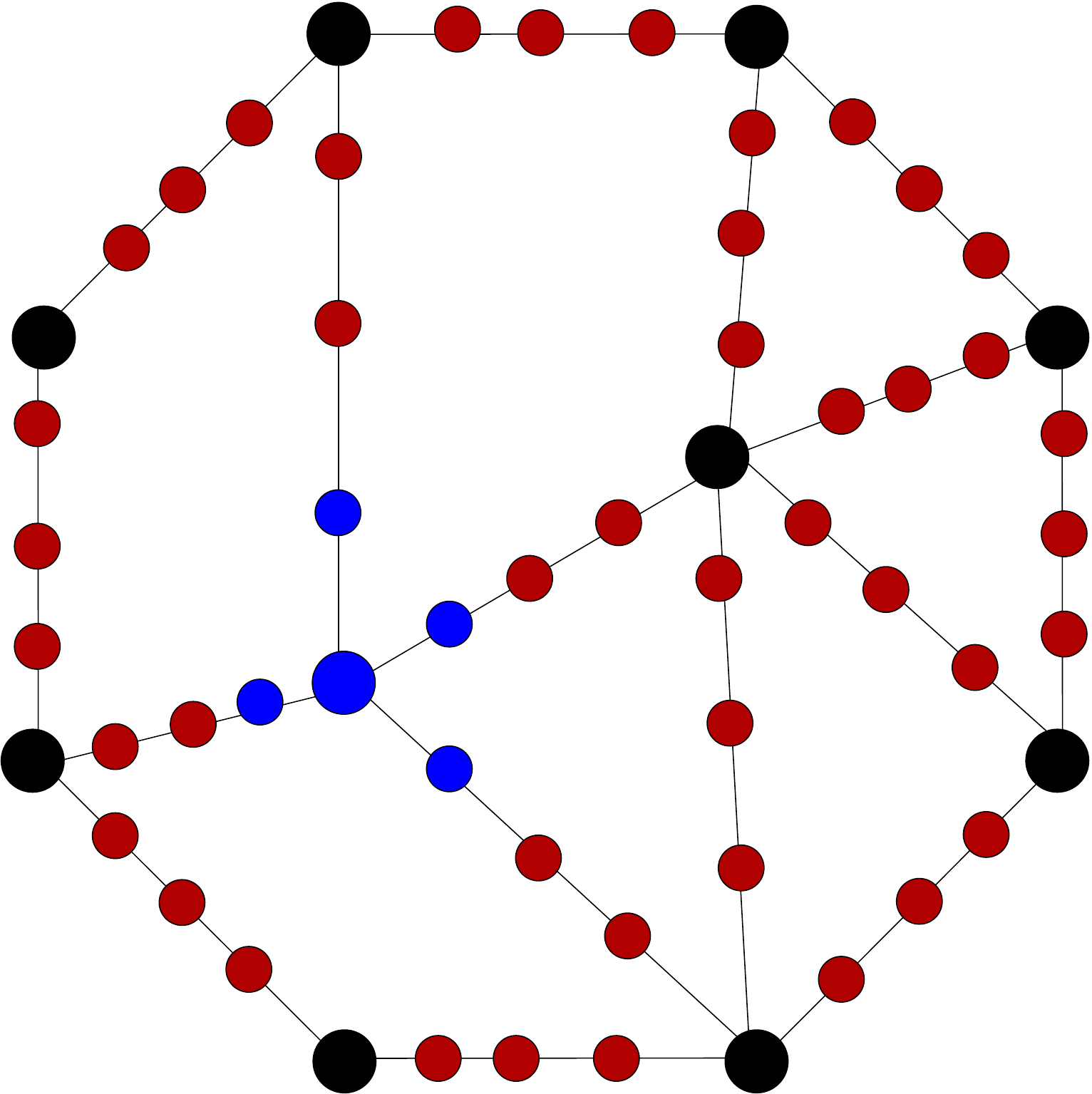_t}} \hspace{2em}
	\scalebox{0.25}{\input{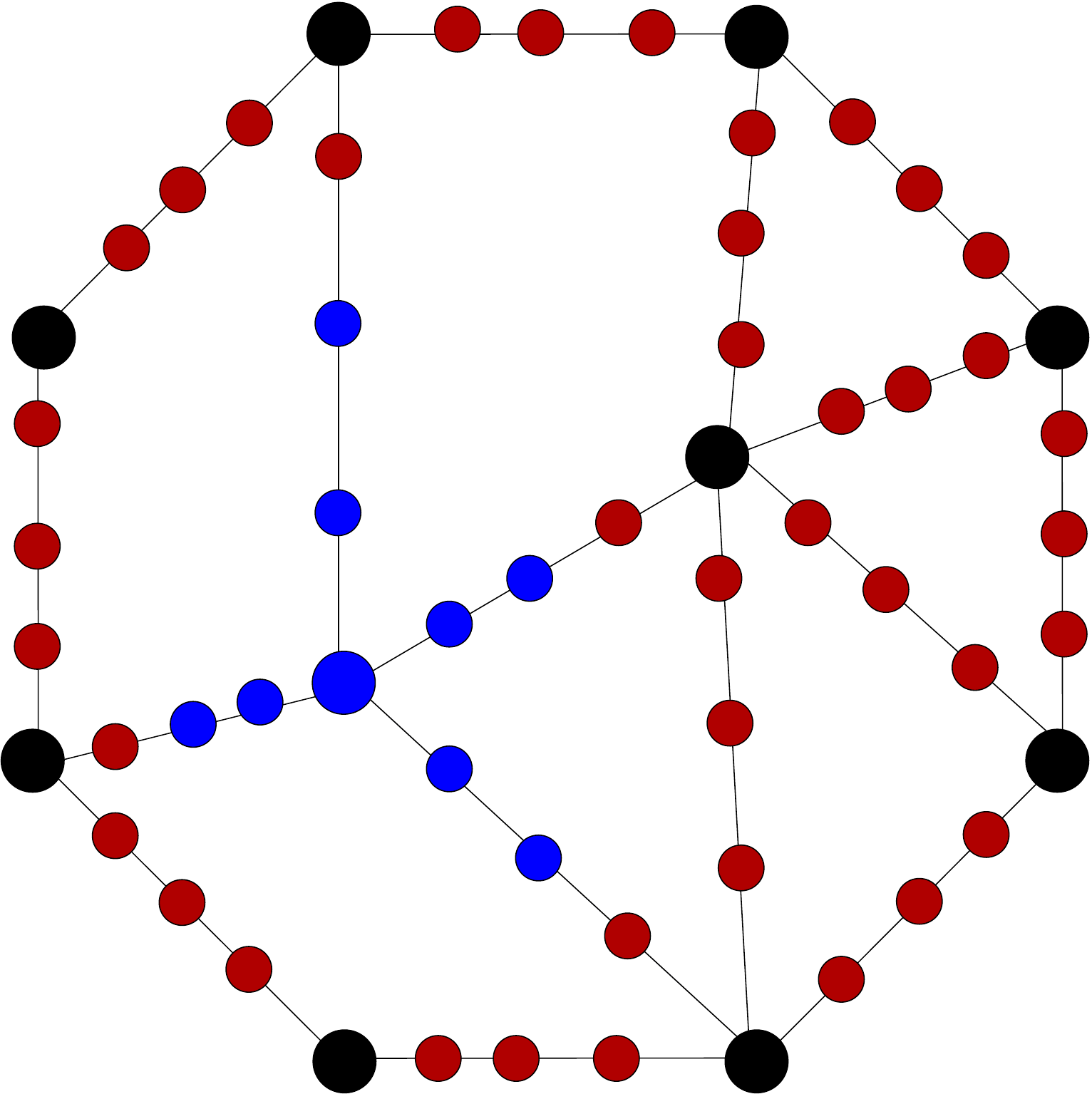_t}}  \hspace{2em}
	\scalebox{0.25}{\input{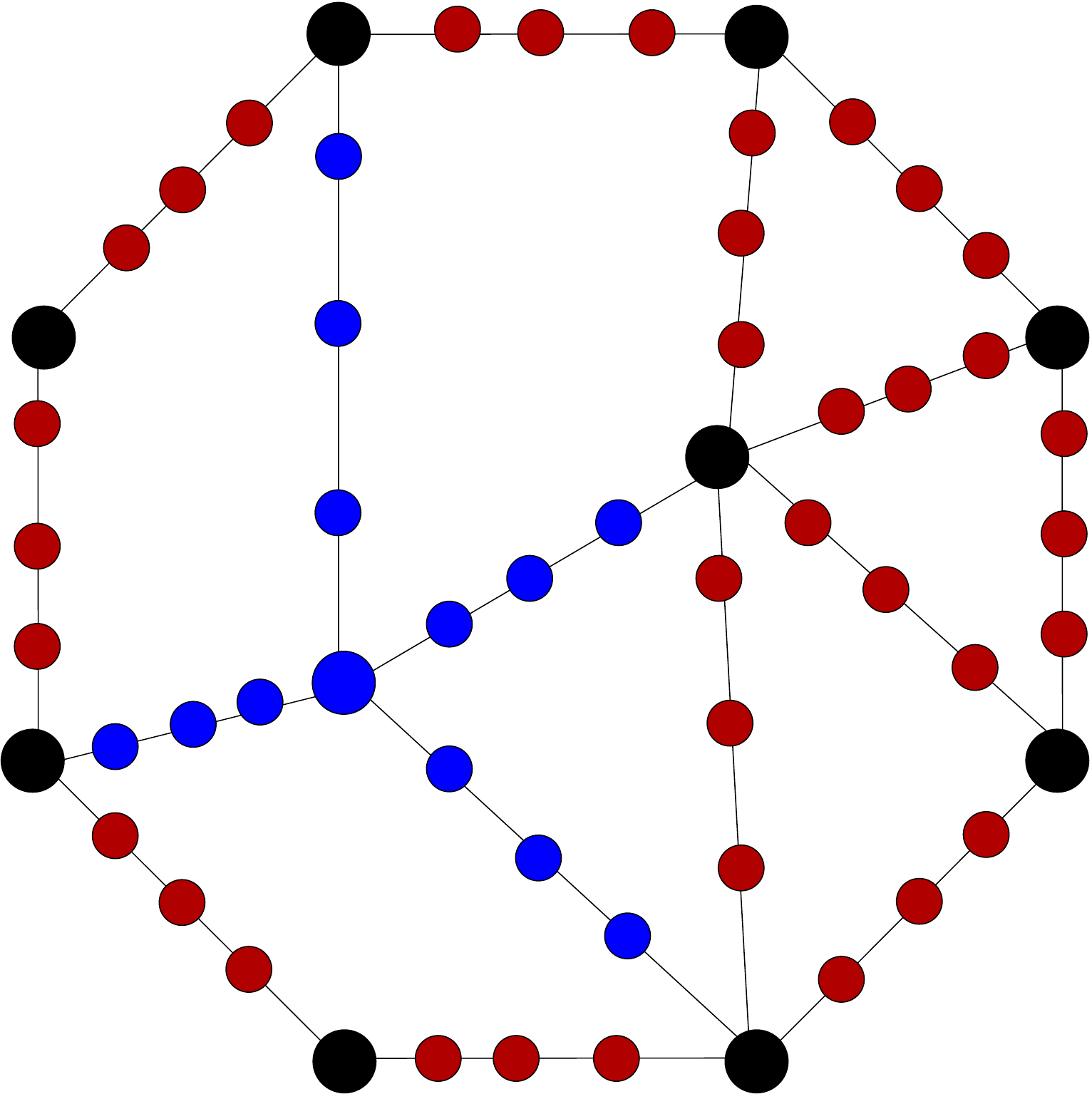_t}} \hspace{2em}
	\scalebox{0.3}{\input{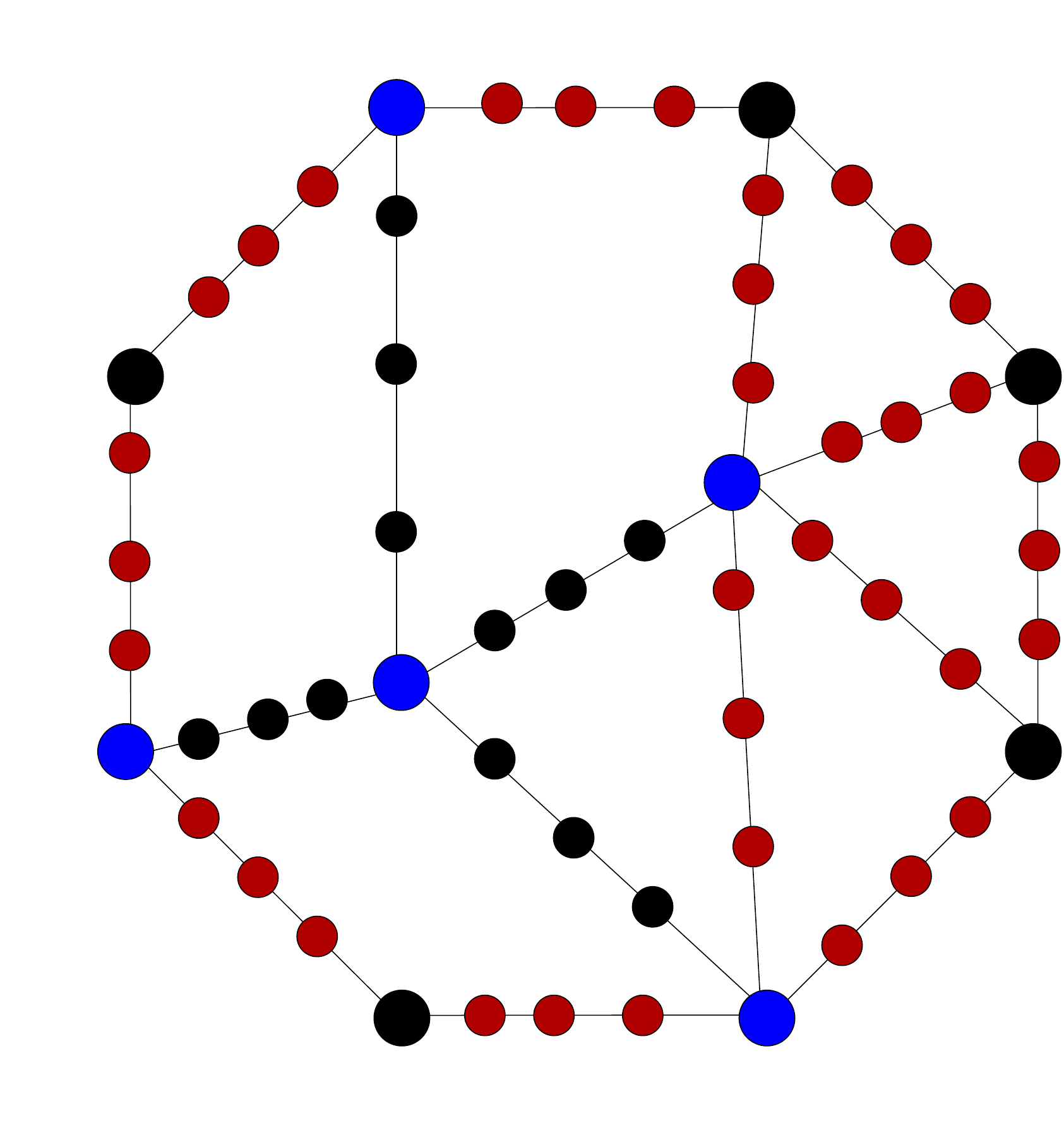_t}}
			
	\caption{Simulating chip-firing at $v$ using a sequence of firing nested subgraphs on subdivided graph.}
	\label{fig:Hk-firing}
\end{figure}

\noindent Here $x_\alpha$ denotes the point on the top edge distance $\alpha$ from vertex $v_1$.  $y_\alpha$ and $z_\alpha$ are defined analogously.  We can also see that extending from rational points of $\Gamma$ to real points can be accomplished in this case simply by letting distances $\alpha$ and $\beta$ be real.     
\end{example}

\section{Conclusions and Open Questions}

In this paper, we presented a number of properties of $|D|$ including verification that it is finitely generated as a tropical semi-module.  We also provided 
some tools for explicitly 
understanding $|D|$ 
as a polyhedral cell complex such as a formula for the dimension of the face containing a given point, as well as 
applications such as using $|D|$ to embed an abstract tropical curve into tropical projective space. 

There are many ways to continue this research for the future.  It is quite tantalizing to investigate how the Baker-Norine rank of a divisor compares with the geometry and combinatorics of the associated linear system as a polyhedral cell complex.  Also, is there any relation between $r(D)$ and the minimal number of generators of $R(D)$?  Can we identify out of our finite generating set $\cS$ which $0$-cells correspond to extremals?  How does the structure of $|D|$ change as we continuously move one point in the support of $D$ or if we change the edge lengths of our metric graph while keeping the combinatorial type of the graph fixed? 

In the case of finite graphs, i.e. divisors whose support lies within the set of vertices of the graph, can we combinatorially describe the associated linear systems?  For example, is there a stabilization or an associated Ehrhart theory that one can use to count the sizes of such linear systems?  
 
Lastly, what other results from classical algebraic geometry carry over from the theory of metric graphs (or tropical curves) and vice-versa?

 \end{document}